\newtheorem{Theorem}{Theorem}[section]
\newtheorem{Definition}[Theorem]{Definition}
\newtheorem{Proposition}[Theorem]{Proposition}
\newtheorem{Lemma}[Theorem]{Lemma}
\newtheorem{Remark}[Theorem]{Remark}
\newtheorem{Hypothesis}{Hypothesis}
\numberwithin{equation}{section}
\theoremstyle{plain}
\theoremstyle{plain}
\theoremstyle{plain}
\theoremstyle{remark}
\theoremstyle{plain}
\theoremstyle{remark}
\newtheorem*{rem*}{\protect\remarkname}
\def\r2{\mathbb{R}^2}
\def\le{\left}
\def\r{\right}
\def\a{\alpha}
\def\d{\delta}
\def\e{\epsilon}
\def\la{\lambda}
\def\si{\sigma}
\def\B{{\cal B}}
\def\A{{\mathcal A}}
\def\H{{\mathcal H}}
\def\s0t{\sup_{t \in [0,T]}}
\def\ds{\displaystyle}
\def\beq{\begin{equation}}
\def\eeq{\end{equation}}
\def\barr{\begin{array}}
\def\earr{\end{array}}
\def\vs{\vspace{.01mm}   \\}
\def\rd{\reals\,^{d}}
\newcommand{\E}{\mathbb E}
\newcommand{\hsp}{\hspace{2truecm}}
\newcommand{\hslp}{\hspace{1truecm}}
\newcommand{\norm}[1]{\left\| #1\right\|}
\newcommand{\inner}[1]{\langle #1\rangle}
\newcommand{\Inner}[1]{\big\langle #1\big\rangle}
\newcommand{\abs}[1]{\lvert #1\rvert}
\def\H{{\mathcal{H}}}
\def\E{{\mathbb{E}}}
\def\P{{\mathbb{P}}}
\date{}
  \providecommand{\corollaryname}{Corollary}
  \providecommand{\lemmaname}{Lemma}
\providecommand{\theoremname}{Theorem}
\theoremstyle{plain}
\providecommand{\corollaryname}{Corollary}
\providecommand{\lemmaname}{Lemma}
\providecommand{\propositionname}{Proposition}
\providecommand{\remarkname}{Remark}
\providecommand{\theoremname}{Theorem}
\begin{document}
\global\long\def\divg{{\rm div}\,}%

\global\long\def\curl{{\rm curl}\,}%

\global\long\def\rt{\mathbb{R}^{3}}%

\global\long\def\rd{\mathbb{R}^{d}}%

\global\long\def\rtwo{\mathbb{R}^{2}}%

\global\long\def\e{\epsilon}%

\title{On the small-mass limit for stationary solutions of stochastic wave equations with state dependent friction}

\author{Sandra Cerrai\thanks{Department of Mathematics, University of Maryland, cerrai@umd.edu. Partially supported by the NSF grant  DMS-1954299 - {\em Multiscale analysis of infinite-dimensional stochastic systems}.}, \ Mengzi Xie\thanks{Department of Mathematics, University of Maryland, mxie2019@umd.edu.}}

\maketitle

\selectlanguage{english}

\date{}

\begin{abstract}
We investigate the convergence, in the small mass limit, of the stationary solutions of a class of stochastic damped wave equations, where the friction coefficient depends on the state and the noisy perturbation if of multiplicative type. We show that the Smoluchowski-Kramers approximation that has been previously shown to be true in any fixed time interval, is still valid  in the long time regime. Namely we prove that the first marginals of any sequence of stationary solutions for the damped wave equation converge to the unique invariant measure of the limiting stochastic quasilinear parabolic equation. The convergence is proved with respect to the Wasserstein distance associated with the $H^{-1}$ norm.

\end{abstract}

\section{Introduction}
\label{introduction}

In this article we deal with the following   stochastic wave equation with state-dependent damping, on a bounded smooth domain $\mathcal{O}\subset \mathbb{R}^d$, with $d\geq 1$,
\begin{equation}
\label{SPDE1}
\le\{\begin{array}{l}
\ds{\mu\partial_t^2 u_\mu(t,x)=\Delta u_\mu(t,x)- \gamma (u_\mu(t,x)) \partial_t u_\mu(t,x) + f(x,u_{\mu}(t,x))+ \sigma(u_{\mu}(t,\cdot))\partial_t w^Q(t,x),}\\[10pt]
\ds{u_\mu(0,x)=\mathfrak{u}_0(x),\ \ \ \ \partial_t u_\mu(0,x)=\mathfrak{v}_0(x),\ \ \ \ \ \ \  u_{\mu}(t,x)=0,\ \ x \in\,\partial \mathcal{O},}
\end{array}\r.
\end{equation}
depending on a parameter $0<\mu<<1$. The friction coefficient $\gamma$ is a strictly positive, bounded and continuously differentiable function. The diffusion coefficient $\sigma$ is bounded and Lipschitz-continuous and the noise $w^Q(t)$ is a cylindrical $Q$-Wiener process, white in time and colored in space.  The nonlinearity $f:\mathcal{O}\times \mathbb{R}\to \mathbb{R}$ is Lipschtz-continuous with respect to the second variable and the identically zero function is globally asymptotically stable in the absence of the stochastic perturbation. Here and in what follows, we denote $H:=L^2(\mathcal{O})$, $H^{-1}:=H^{-1}(\mathcal{O})$, and $H^1:=H^1_0(\mathcal{O})$.

The solution $u_\mu(t,x)$ of equation \eqref{SPDE1} can be interpreted as the displacement  of the particles of a material continuum in a domain $\mathcal{O}$, subject to a random external force field $\partial_t w^Q(t,x)$ and a damping force which is proportional to the velocity field and depends on the state $u_\mu$. The second order differential operator takes into account of the interaction forces between neighboring particles, in  the presence of a non-linear reaction given by $f$. Here  $\mu$ represents the constant density of the particles and we are interested in the regime when $\mu\to 0$, known as the Smoluchowski-Kramers approximation limit (ref. \cite{kra} and \cite{smolu}).

In \cite{SK1} and \cite{SK2} it has been proven that, when $\gamma$ is constant, for every $T>0$ and $\eta>0$
\begin{equation}
\label{lim1-intro}
\lim_{\mu \to 0} \mathbb{P}\le(\sup_{t \in\,[0,T]}\Vert u_\mu(t)-u(t)\Vert_{H}>\eta \r)=0,	
\end{equation}
where $u \in\,L^2(\Omega; C([0,T];H)\cap L^2(0,T;H^1))$ is the solution of the parabolic problem
\begin{equation} \label{lim-par}
	\left\{\begin{array}{l}
\displaystyle{\gamma\partial_t u(t,x)= \Delta u(t,x)+f(x,u(t,x))+\sigma(u(t,\cdot))\partial_t w^Q(t,x),}\\[10pt]
		\displaystyle{u(0,x)=\mathfrak{u}_0(x), \ \ \ \ \ \ \ u(t,\cdot)\big|_{\partial\mathcal{O}}=0.}
	\end{array}\right.\end{equation}

When the friction coefficient $\gamma$ is state-dependent, the situation is more complicated and, because of the interplay between the noise and the non-constant friction, in the small-mass limit an extra drift term is created.  In this regard, in \cite{cerraixi} it has been proven that for every  $\mathfrak{u}_0 \in\,H^1$, $T>0$ and $p<\infty$, and for every $\eta>0$
\begin{equation} \label{lim_intro_p}  \lim_{\mu \to 0} \mathbb{P}\le(\int_0^T\Vert u_\mu(t)-u(t)\Vert^p_{H}\,dt>\eta \r)=0,\end{equation} where 
$u$ is the unique solution of the stochastic quasi-linear equation 
\begin{equation}\label{limiting_problem_intro}
	\left\{\begin{array}{l}
\displaystyle{\gamma(u(t,x))\partial_t u(t,x)= \Delta u(t,x)+f(x,u(t,x)) -\frac{\gamma'(u(t,x))}{2\gamma^2(u(t,x))} \sum_{i=1}^\infty |\sigma(u(t,\cdot))\,Qe_i(x)|^2 }\\[12pt]
\ds{\hsp \hsp+\sigma(u(t,\cdot))\partial_t w^Q(t,x)}\\[12pt]
		\displaystyle{u(0,x)=\mathfrak{u}_0(x), \ \ \ \ \ \ \ u(t,\cdot)\big|_{\partial\mathcal{O}}=0.}
	\end{array}\right.
\end{equation}

Notice that the case of a non-constant damping coefficient is not  the sole instance in which, within the context of a small mass limit, an additional drift term manifests. For example, in the case of a damped stochastic wave equation, constrained to live on the unitary sphere of $H$, in the limit the Smoluchowski-Kramers approximation yields a  stochastic parabolic problem also constrained to live on the unitary sphere of $H$, where an extra-drift term emerges, and that drift does not encompass the It\^o-to-Stratonovich correction (see \cite{BC23}). For a partial list of references where  this type of limit has been addressed in a variety of different contexts, see \cite{bhw}, \cite{SK3}, \cite{f}, \cite{fh}, \cite{hhv}, \cite{hmdvw}, \cite{hu}, \cite{lee}, \cite{spi}, for the finite dimension, and \cite{BC23}, \cite{SK1},  \cite{SK2},    \cite{cs}, \cite{cerraixi}, \cite{Lv2},  \cite{Nguyen}, \cite{salins}, for the 
infinite dimension.

After establishing the validity of the small mass limits within a fixed time interval $[0, T]$, the next step of interest is to compare the long-term dynamics of the second-order system with that of the first-order system (to this purpose, see e.g. \cite{cerraiglatt}, \cite{sal}, \cite{sal2}, \cite{CWZ}, \cite{cerraixie}, \cite{hu}). 

In \cite{SK1}, a comparative analysis of the long-term behavior of equations \eqref{SPDE1} (with a constant $\gamma$) and \eqref{lim-par} was conducted, assuming both systems to be of gradient type. Notably, in the case where the  noise is white in both space and time ($Q=I$) and the dimension is $d=1$, an explicit expression for the Boltzmann distribution of the process $z_\mu(t):=(u_{\mu}(t), \partial u_{\mu}/\partial t(t))$ in the phase space $\H:=L^2(0,L)\times H^{-1}(0,L)$ was derived. Since there is no equivalent of the Lebesgue measure in the functional space $\H$, an auxiliary Gaussian measure was introduced, and the density of the Boltzmann distribution was then expressed with respect to such auxiliary Gaussian measure, which itself corresponds to the stationary measure of the linear wave equation associated with problem \eqref{SPDE1}.
In particular, it was shown  that the first marginal of the invariant measure linked to the process $z_{\mu}(t)$ remains independent of $\mu>0$ and coincides with the invariant measure for the heat equation \eqref{lim-par}.

In the case of non-gradient systems, that is when the noise is colored in space and/or of multiplicative type,  there is no  explicit expression for the invariant measure $\nu_\mu$ associated with system \eqref{SPDE1} and there is no reason to expect that the first marginal of $\nu_\mu$ does not depend on $\mu$ or  coincides with the invariant measure $\nu$ of system \eqref{lim-par}. Nonetheless, in \cite{cerraiglatt} it was proved that, as the mass parameter $\mu$ tends to zero, the first marginal of any invariant measure $\nu_\mu$ associated with the second-order system \eqref{SPDE1} converges in a suitable manner to the invariant measure $\nu$ of the first-order system \eqref{lim-par}.
Specifically, the following convergence was established
\begin{equation}
\label{intro4}
\lim_{\mu\to 0} \mathcal{W}_\a\left((\Pi_1\nu_\mu)^\prime,\nu\right)=0,
\end{equation}
where $(\Pi_1\nu_\mu)^\prime$ denotes the extension of the first marginal of the invariant measure $\nu_\mu$ to $H$, and the metric $\mathcal{W}_\a$ corresponds to the Wasserstein metric on $\mathcal{P}(H)$ associated with a distance metric $\a$ in $H,$ which was determined based on the characteristics of the non-linearity function $f$ under consideration.

\medskip

In the present paper, we want to see if any of the results proved in \cite{cerraiglatt} in the case of a constant friction $\gamma$, can be proven for of a state-dependent $\gamma$, where the Smoluchowski-Kramers approximation gives the stochastic quasi-linear parabolic problem \eqref{limiting_problem_intro}, instead of the simpler parabolic semi-linear problem \eqref{lim-par}. 

One of the key ingredients used in  \cite{cerraiglatt} for the  proof of \eqref{intro4}  is the fact that  the transition semigroup $P_t$ associated with equation \eqref{lim-par} admits a unique invariant measure $\nu \in\,\mathcal{P}(H)$ and the following contraction property  holds
\begin{equation}
\label{intro5}
\mathcal{W}_\a\le(P_t^\star \nu_1,P_t^\star \nu_2\r)\leq c\,e^{-\delta t}\,\mathcal{W}_\a(\nu_1,\nu_2),\ \ \ \ t\geq 0,\ \ \ \ \ \nu_1, \nu_2 \in\,\mathcal{P}(H),
\end{equation}
for some $\delta>0$.
In the case of equation \eqref{lim-par}, this kind of problems have been studied extensively and a wide variety of results is available. 
 However, in the case of the quasi-linear problem \eqref{limiting_problem_intro} the situation is considerably more delicate and several fundamental facts are not known, as for one  whether the semigroup associated is Feller in $H$ or not. In particular, even the use of the Krylov-Bogoliubov theorem for the proof of the existence of an invariant measure in $H$ is not possible.
 Thus, in the present paper we have to follow a different path, that in particular brings us to study equations \eqref{SPDE1} and \eqref{limiting_problem_intro} in spaces of lower regularity than $H^1\times H$ and $H$, respectively.
  
 In \cite{cerraixi}, it has been proved that equation \eqref{SPDE1} is well-posed in $\mathcal{H}_1:=H^1\times H$, for every $\mu>0$, so that the associated Markov transition semigroup $P^{\mu, \mathcal{H}_1}_t$ can be introduced. Our first step is showing that in fact \eqref{SPDE1} is well-posed also in $\mathcal{H}:=H\times H^{-1}$, for every $\mu>0$,  and there exists an invariant measure $\nu^{\mu, \mathcal{H}}$ for the corresponding transition semigroup $P^{\mu, \mathcal{H}}_t$. We show that such invariant measure is supported in $\mathcal{H}_1$ and its restriction to $\mathcal{H}_1$ is invariant for $P^{\mu, \mathcal{H}_1}_t$. Moreover, we prove suitable uniform bounds for the moments of $\nu^{\mu, \mathcal{H}}$ and $\nu^{\mu, \mathcal{H}_1}$, which are fundamental for the proof of the limit.
 
 Next, we move our analysis to the limiting equation \eqref{limiting_problem_intro}. As already done in \cite{cerraixi} and \cite{cerraixie}, we do not work directly with \eqref{limiting_problem_intro}, but rather with its equivalent formulation 
 \begin{equation}\label{limit_para_intro}
	\le\{\begin{array}{l}
		\ds{\partial_{t}\rho(t,x)=\text{div}\left(\frac{1}{\gamma(g^{-1}(\rho(t,x)))}\nabla\rho(t,x)\right)+f(x,g^{-1}(\rho(t,x)))+\sigma(g^{-1}(\rho(t,\cdot)))\partial_{t}w^{Q}(t,x), }\\[16pt]
		\ds{\rho(0,x)=g(\mathfrak{u}_0(x)),\ \ \ \ \ \ \  \rho(t,\cdot)|_{\partial\mathcal{O}}=0 },
	\end{array}\r.
\end{equation}
where 
$g$ is the antiderivative of $\gamma$ vanishing at zero. Since we are assuming that $\gamma$ is strictly positive, bounded and continuously differentiable, the mappings 
\[h \in\,H\mapsto g\circ h \in\,H,\ \ \ \ \ \ \ h \in\,H^1\mapsto g\circ h \in\,H^1\] are both  homeomorphisms and the coefficients in \eqref{limit_para_intro} are all well defined and regular.   Moreover, as shown in \cite{cerraixi}, by using a generalized It\^o's formula,  for every $\mathfrak{r}_0=g(\mathfrak{u}_0) \in\,H^1$ and $t\geq 0$   we have that 
\[\rho^{\mathfrak{r}_0}(t)=g(u^{\mathfrak{u}_0}(t)),\ \ \ \ \ \ \ \ \ \ \ g^{-1}(\rho^{\mathfrak{r}_0}(t))=u^{\mathfrak{u}_0}(t).\]
In particular,  equation \eqref{limit_para_intro} is well posed  in $C([0,T];H)\cap L^2(0,T;H^1)$ if and only if equation \eqref{limiting_problem_intro} is.

As a consequence of \eqref{lim_intro_p}, we have that for every initial condition $\mathfrak{r}_0 \in\,H^1$ equation \eqref{limit_para_intro} has a unique solution $\rho^{\mathfrak{r}_0} \in\,L^2(\Omega;L^p(0,T;H^1))$, with $p<\infty$. However, since 
the long time behavior of  \eqref{limit_para_intro} in $H^1$ and $H$ is not well understood,  we need to study its well-posedness in $H$ and   $H^{-1}$, so that we can introduce the corresponding transition semigroups $R^{H}_t$ and $R^{H^{-1}}_t$.  Due to the equivalence of problems \eqref{limiting_problem_intro} and \eqref{limit_para_intro} in $H$ this allows to introduce the transition semigroup $P^H_t$ associated with equation \eqref{limiting_problem_intro}. 

Next, we  prove that there exists some constant $\lambda>0$ such that for every $\mathfrak{r}_1, \mathfrak{r}_2 \in\,H^{-1}$ and $t\geq 0$  \begin{equation}  \label{contraction_intro}
		\E\norm{\rho^{\mathfrak{r}_1}(t)-\rho^{\mathfrak{r}_1}(t)}_{H^{-1}}^{2}\leq e^{-\lambda t}\norm{\mathfrak{r}_1-\mathfrak{r}_2}_{H^{-1}}^{2},\ \ \ \ \ \ t\geq 0.
	\end{equation}
To this purpose, we would like to mention that in \cite{fridnew} and \cite{GH}, it was proved that
under suitable conditions on the initial conditions, the following property holds
\[\E\norm{\rho^{\mathfrak{r}_1}(t)-\rho^{\mathfrak{r}_1}(t)}_{L^1(\mathcal{O})}^{2}\leq \norm{\mathfrak{r}_1-\mathfrak{r}_2}_{L^1(\mathcal{O})}^{2},\ \ \ \ \ \ t\geq 0.\] Such bound  gives in particular the Feller property in $L^1(\mathcal{O})$ but, unfortunately, this is not useful to our analysis, as it is not clear how to handle the proof of our limiting problem in a $L^1(\mathcal{O})$ setting. As far as we know, it is not clear if such a bound is satisfied in $H$, and, as we already mentioned above,  this is why we need to work in $H^{-1}$, where we have the validity of \eqref{contraction_intro}.

As a consequence of  \eqref{contraction_intro}, we have  that 
$R^{H^{-1}}_t$ is Feller. This, together with suitable uniform bounds in $H^1$, allows to conclude that $R^{H^{-1}}_t$ has an invariant measure $\nu^{H^{-1}}$, supported in $H^1$. Moreover \eqref{contraction_intro} implies 
 that for every $\varphi\in\text{Lip}_{b}(H^{-1})$ and $\mathfrak{r}_1, \mathfrak{r}_2\in H^{-1}$ 
	\begin{equation}\label{sm9_intro}
		\left\lvert R_t^{H^{-1}}\varphi(\mathfrak{r}_1)-R_t^{H^{-1}}\varphi(\mathfrak{r}_2)\right\rvert\leq [\varphi]_{\text{Lip}_{H^{-1},\alpha}}e^{-\lambda t/2}\norm{\mathfrak{r}_1-\mathfrak{r}_2}_{H^{-1}},\ \ \ \ \ \ \ \ \   t\geq 0,
	\end{equation}
	so that $\nu^{H^{-1}}$ is the unique invariant measure of $R^{H^{-1}}_t$, and $\nu^H$, its restriction to $H$, turns out to be the unique invariant measure of $R^H_t$. Finally, due to the {\em equivalence} between equations \eqref{limiting_problem_intro} and \eqref{limit_para_intro}, we show that this implies that $P^{H}_t$ has a unique invariant measure $\nu$. 
	
	By using a general argument developed in \cite{FoldesGlattHoltzRichardsThomann2013}, and already used in \cite{cerraiglatt} in a similar context, all this allows to obtain our main result. Namely, we can show that  if we define
\[\alpha(\mathfrak{u}_1,\mathfrak{u}_2):=\norm{\mathfrak{u}_1-\mathfrak{u}_2}_{H^{-1}},\ \ \ \ \ \ \mathfrak{u}_1, \mathfrak{u}_2\in H^{-1},\]
then we have
	\begin{equation} \label{fine?}
\lim_{\mu\to 0} \mathcal{W}_\alpha\left(\Pi_1 \nu_\mu^{\mathcal{H}},		\nu\right)=0.
	\end{equation}
	Actually, due to \eqref{sm9_intro} and the invariance of 
	 $\nu_\mu^{\mathcal{H}}$ and $\nu^{H^{-1}}$  we have
\[\begin{aligned}
\mathcal{W}_{\alpha}&\left(\left[\left(\Pi_{1}\nu_{\mu}^{ \mathcal{H}}\right)\circ g^{-1}\right]',\nu^{H^{-1}}\right)\leq \mathcal{W}_{\alpha}\left(\left[\Pi_1((P_t^{\mu, \mathcal{H}})^\star \nu_\mu^{\mathcal{H}})\circ g^{-1}\right]^\prime,(R^{H^{-1}}_{t})^{\ast}\left[\left(\Pi_{1}\nu_{\mu}^{ \mathcal{H}}\right)\circ g^{-1}\right]'\r)\\[10pt]
&\hsp \hsp\hsp+c\, e^{-\lambda t}\,\mathcal{W}_{\alpha}\left(
\left[\left(\Pi_{1}\nu_{\mu}^{ \mathcal{H}}\right)\circ g^{-1}\right]',\nu^{H^{-1}}
\right),	
\end{aligned}
\]
and then, if we pick $\bar{t}>0$ such that $c e^{-\lambda \bar{t}}\leq 1/2$, we obtain
\[\mathcal{W}_{\alpha}\left(\left[\left(\Pi_{1}\nu_{\mu}^{ \mathcal{H}}\right)\circ g^{-1}\right]',\nu^{H^{-1}}\right)\leq 2\,\mathcal{W}_{\alpha}\le(\left[(\Pi_1((P_{\bar{t}}^{\mu, \mathcal{H}})^\star \nu_\mu^{\mathcal{H}})\circ g^{-1}\right]^\prime,(R^{H^{-1}}_{\bar{t}})^{\ast}\left[\left(\Pi_{1}\nu_{\mu}^{ \mathcal{H}}\right)\circ g^{-1}\right]'\r)\]
(here we are using the notation $[\cdot]^\prime$ to denote the extension to $H^{-1}$ of  an arbitrary probability measure defined in $H$).
Thanks to the  Kantorovich-Rubinstein duality, we have
\[\mathcal{W}_{\alpha}\le(\left[(\Pi_1((P_t^{\mu, \mathcal{H}})^\star \nu_\mu^{\mathcal{H}})\circ g^{-1}\right]^\prime,(R^{H^{-1}}_{t})^{\ast}\left[\left(\Pi_{1}\nu_{\mu}^{ \mathcal{H}}\right)\circ g^{-1}\right]'\r)\leq \mathbb{E}\,\alpha (g(u_\mu^{\zeta_\mu}(t)), \rho^{\,g(\xi_\mu)}(t)),\] 
for every  $\mathcal{F}_{0}$-measurable $\mathcal{H}_{1}$-valued random variable $\zeta_\mu:=(\xi_\mu,\eta_\mu)$, distributed as the invariant measure $\nu_{\mu}^{ \mathcal{H}}$.
Hence,    once we prove that for every $t\geq 0$ large enough
	\[
\lim_{\mu\to 0}\,\mathbb{E}\,\alpha (g(u_\mu^{\zeta_\mu}(t)), \rho^{g(\xi_\mu)}(t))=\lim_{\mu\to 0}\E\,\Vert g(u_\mu^{\zeta_\mu}(t))-\rho^{g(\xi_\mu)}(t)\Vert_{H^{-1}}=0,
	\]
we obtain that 
\[\lim_{\mu\to0}\mathcal{W}_{\alpha}\left(\left[\left(\Pi_{1}\nu_{\mu}^{ \mathcal{H}}\right)\circ g^{-1}\right]',\nu^{H^{-1}}\right)=0,\]
and we conclude by showing that this implies \eqref{fine?}. Moreover, from the proof of \eqref{fine?} we also get that  $\Pi_1 \nu_\mu^{\mathcal{H}}$ converges to $\nu$, weakly in $H$, as $\mu\downarrow 0$.


\section{Notations and assumptions}\label{assumption}

Throughout the present paper $\mathcal{O}$ is a bounded domain in $\mathbb{R}^d$, with $d\geq 1$, having a smooth boundary. We denote by $H$ the Hilbert space $L^2(\mathcal{O})$ and by $\Vert\cdot\Vert_H$ and $\langle \cdot , \cdot \rangle_H$ the corresponding  norm and inner product.  

Given the domain $\mathcal{O}$, we denote by $A$ the realization of the Laplace operator $\Delta$, endowed with Dirichlet boundary conditions. As known there exists a complete orthonormal basis
$\{e_i\}_{i\in\mathbb{N}}$  of $H$ which diagonalizes $A$. In what follows,  we denote by $\{-\alpha_i\}_{i\in \mathbb{N}}$ the corresponding sequence of eigenvalues, and for every $\d \in\,\mathbb{R}$, we define  $H^\delta$ as the completion of $C^{\infty}_0(\mathcal{O})$ with respect to the norm
\[\Vert u\Vert_{H^\delta}^2:=\sum_{i=1}^\infty \alpha_i^\delta \langle h,e_i\rangle_H^2.\]
Notice that with this definition $H^0=H$ and, if $\delta_1<\delta_2$, then $H^{\delta_2}\hookrightarrow H^{\delta_1}$ with compact embedding.
 We also define 
\[\mathcal{H}_\delta:=H^\delta\times H^{\delta-1},\ \ \ \ \ \ \ \  \mathcal{H}:=H\times H^{-1}.\]

Next, for every two separable Hilbert spaces $E$ and $F$, we denote by $\mathcal{L}_2(E,F)$  the space of Hilbert-Schmidt operators from $E$ into $F$. $\mathcal{L}_2(E,F)$ is a Hilbert space, endowed with the inner product
\[\langle B, C\rangle_{\mathcal{L}_2(E,F)}=\mbox{Tr}_E\,[B^\star C]=\mbox{Tr}_F[C B^\star],\]
and, as well known, $\mathcal{L}_2(E,F) \subset \mathcal{L}(E,F)$, with
\[
	\Vert B\Vert_{\mathcal{L}(E,F)}\leq 	\Vert B\Vert_{\mathcal{L}_2(E,F)}.
\]

Finally, if $X$ is any Polish space, we denote by $B_b(X)$ the space of Borel bounded functions $\varphi:X\to \mathbb{R}$, endowed with the sup-norm
\[\Vert \varphi\Vert_{\infty}:=\sup_{h \in\,X}\vert \varphi(h)\vert.\]
 Moreover, we denote by $C_b(X)$ the subspace of uniformly continuous and bounded functions.


\subsection{Assumptions}
We assume that $w^Q(t)$ is a cylindrical $Q$-Wiener process, defined on a complete stochastic basis  $(\Omega,\mathcal{F},(\mathcal{F}_t)_{t\geq 0},\mathbb{P})$. This means that $w^Q(t)$ can be formally written as 
\[
w^Q(t)=\sum_{i=1}^\infty Q e_i \beta_i(t),
\]
where $\{\beta_i\}_{i\in \mathbb{N}}$ is a sequence of independent standard Brownian motions on $(\Omega,\mathcal{F},(\mathcal{F}_t)_{t\geq 0},\mathbb{P})$,   $\{e_i\}_{i\in\,\mathbb{N}}$ is the complete orthonormal system introduced above that diagonalizes the Laplace operator, endowed with Dirichlet boundary conditions, and $Q:H\rightarrow H$ is a bounded  linear operator. 
When $Q=I$, the process $w^I(t)$ will be denoted by $w(t)$. In particular, we have $w^Q(t)=Qw(t)$.

In what follows we shall denote by $H_Q$ the set $Q(H)$. $H_Q$ is the reproducing kernel of the noise $w^Q$ and  is a Hilbert space, endowed with the inner product
\[\langle h, k\rangle_{H_Q}=\langle Q^{-1}h,Q^{-1}k\rangle_H,\ \ \ \ h, k \in\,H_Q.\] Notice that the sequence $\{Q e_i\}_{i \in\,\mathbb{N}}$ is a complete orthonormal system in $H_Q$. Moreover, if $U$ is any Hilbert space containing $H_Q$ such that the embedding  of $H_Q$ into $U$ is Hilbert-Schmidt, we have that 
\[w^Q \in\,C([0,T];U).
\]

\begin{Hypothesis}
	\label{Hypothesis1}
	The mapping $\si:H\to \mathcal{L}_2(H_Q,H)$ is defined by 
	\[[\sigma(h)Qe_i](x) = \sigma_i(x,h(x)), \ \ \ \ x \in\,\mathcal{O}\ \ \ \ \ h \in\,H,\ \ \ \ i \in\,\mathbb{N},\] 
	for some measurable mappings $\sigma_i:\mathcal{O}\times \mathbb{R}\rightarrow \mathbb{R}$. We assume that there exists $L_\sigma>0$ such that
	\begin{equation}
		\label{sgfine1}	
		\sup_{x \in\,\mathcal{O}}\, \sum_{i=1}^\infty \vert \sigma_i(x,y_1) - \sigma_i(x,y_2)\vert^2 \leq L_\sigma\,\vert y_1-y_2\vert^2,\ \ \ \ \ y_1, y_2 \in\,\mathbb{R}.
	\end{equation}
	Moreover, we assume $\sigma$ is bounded, that is,
	\begin{equation}\label{sgfine2}
		\sigma_{\infty}:=\sup_{h\in H}\Vert\sigma(h)\Vert _{\mathcal{L}_{2}(H_{Q},H)}<\infty.
	\end{equation}
\end{Hypothesis}

\begin{Remark}
{\em \begin{enumerate}
\item[1.] Condition \eqref{sgfine1} implies that $\si:H\to \mathcal{L}_2(H_Q,H)$ is Lipschitz continuous. Namely,
			for any $h_1,h_2\in H$ 
			\begin{equation}   \label{sigma-lip}
				\Vert\si(h_1)-\si(h_2)\Vert_{\mathcal{L}_2(H_Q,H)} \leq \sqrt{L_\sigma}\, \Vert h_1 -h_2\Vert_H.
			\end{equation}
		\item[2.] If the noise is additive, then Hypothesis \ref{Hypothesis1} is satisfied when $\text{Tr}Q^{2}<+\infty$. 
	
\end{enumerate}}
\end{Remark}

\begin{Hypothesis}\label{Hypothesis2}
	The mapping $\gamma$ belongs to $C^1_b(\mathbb{R})$ and there exist $\gamma_0$ and $\gamma_1$ such that
	\begin{equation}
		\label{nonlinearity assumption}
		0<\gamma_0\leq \gamma( r)\leq \gamma_1,\ \ \ \ \ \  r\in\mathbb{R}.
	\end{equation}
	
\end{Hypothesis}

If we define
\[g(r):=\int_0^r \gamma(\si)\,d\si,\ \ \ \ \ r \in\,\mathbb{R},\]
the function $g:\mathbb{R}\to\mathbb{R}$ is differentiable, strictly increasing and invertible so that its inverse $g^{-1}:\mathbb{R}\to\mathbb{R}$ is differentiable, with
\begin{equation}
	\label{sm3}
	\sup_{r \in\,\mathbb{R}}\,(g^{-1})^\prime(r)\leq \frac 1{\gamma_0}.
\end{equation}

\begin{Hypothesis}
	\label{Hypothesis3}
	The mapping $f:\mathcal{O}\times\mathbb{R}\to\mathbb{R}$ is measurable and   there exists a positive constant $L_f$ such that
	\begin{equation}
		\label{L_F_condition}
		L_f<\frac{\alpha_{1}\gamma_{0}}{\gamma_{1}},
	\end{equation}
	and
	\begin{equation}\label{f_Lipchitz}
		\sup_{x \in\,\mathcal{O}}\abs{f(x,r)-f(x,s)}\leq L_f\abs{r-s},\ \ \ r,s\in\mathbb{R}.
	\end{equation}
Moreover,
	\begin{equation*}
		\sup_{x\in\mathcal{O}}\abs{f(x,0)}<\infty.
	\end{equation*}
\end{Hypothesis}

In what follows, for every $x\in\mathcal{O}$ and  $r\in\mathbb{R}$ we denote 
\begin{equation*}
	\mathfrak{f}(x,r):=\int_{0}^{r}f(x,s)ds,
\end{equation*}
and for every function $h:\mathcal{O}\to\mathbb{R}$, we  denote
\begin{equation*}
	F(h)(x):=f(x,h(x)),\ \ \ \ \ \ \ \ x\in\mathcal{O}.
\end{equation*}

\begin{Remark}
{\em \begin{enumerate}	
\item[1.]  Condition \eqref{f_Lipchitz} implies that $F:H\to H$ is Lipschitz continuous. Namely for any $h_{1},h_{2}\in H$
	\[
		\norm{F(h_{1})-F(h_{2})}_{H}\leq L_f\norm{h_{1}-h_{2}}_{H}.
	\]
	Moreover, there exists $c>0$ such that
	\begin{equation}
	\label{sm1}
	\Vert F(h)\Vert_{H}\leq L_f\,\Vert h\Vert_H+c.	
	\end{equation}

\item[2.]  If the friction coefficient $\gamma$ is constant, then $\gamma_{0}=\gamma_{1}$, and condition \eqref{L_F_condition} becomes
	\begin{equation*}
		L_f<\alpha_{1}.
	\end{equation*}
	\item[3.] It is immediate to check that if for every $h \in\,H$ we define
	\[\Lambda(h):=\int_{\mathcal{O}}f(x,h(x))\,dx,\]
	then
	\begin{equation}
	\label{sm2}
	[D\Lambda(h)](x)=f(x,h(x)),\ \ \ \ \ x \in\,\mathcal{O}.	
	\end{equation}

	\end{enumerate}}
\end{Remark}

\begin{Hypothesis}\label{Hypothesis_control}
	We assume 
	\begin{equation}\label{additional_condition}
		L_f+\frac{L_\sigma}{2\gamma_{0}}<\frac{\alpha_{1}\gamma_{0}}{\gamma_{1}}.
\end{equation}
\end{Hypothesis}

\begin{Remark}
{\em If $\sigma$ is constant, then $L_\sigma=0$ and Hypothesis \ref{Hypothesis_control} reduces to condition \eqref{L_F_condition} in Hypothesis 	\ref{Hypothesis3}.}
\end{Remark}


\section{The main result}
\label{sec3}

For every $\mu>0$, we denote $v_{\mu}:=\partial_{t}u_{\mu}$, and rewrite equation \eqref{SPDE1} as the following system
\begin{equation}\label{system}
	\le\{\begin{array}{l}
		\ds{du_{\mu}(t)=v_{\mu}(t)dt, }\\[10pt]
		\ds{\mu dv_{\mu}(t)=\big[ A u_{\mu}(t)-\gamma(u_{\mu}(t))v_{\mu}(t)+F(u_{\mu}(t))\big]dt+\sigma(u_{\mu}(t))dw^{Q}(t), }\\[10pt]
		\ds{u_\mu(0)=\mathfrak{u}_0,\ \ \ \ \ \ \ \ \ v_\mu(0)=\mathfrak{v}_0, }
	\end{array}\r.
\end{equation}
where $A$ is the realization  in $H$ of the Laplacian $\Delta$,  endowed with Dirichlet boundary conditions. If we define 
\begin{equation*}
	\eta:=\frac{1}{\sqrt{\mu}}\big(\mu\partial_{t}u+g(u)\big),\ \ \ \zeta=(u,\eta),
\end{equation*}
system \eqref{system} can be rewritten as
\begin{equation}\label{abstract}
	d\zeta_{\mu}(t)=\mathcal{A}_{\mu}(\zeta_{\mu}(t))dt+\Sigma_{\mu}(\zeta_{\mu}(t))dw^{Q}(t),\ \ \ \zeta_{\mu}(0)=\Big(\mathfrak{u}_0,\sqrt{\mu} \mathfrak{v}_0+\frac{g(\mathfrak{u}_0)}{\sqrt{\mu}}\Big),
\end{equation}
where we denoted 
\begin{equation*}
	\mathcal{A}_{\mu}(\zeta):=\frac{1}{\sqrt{\mu}}\Big(\eta-\frac{g(u)}{\sqrt{\mu}}, A u+F(u)\Big),\ \ \ \zeta=(u,\eta)\in D(\mathcal{A}_{\mu})=\mathcal{H}_{1},
\end{equation*}
and 
\begin{equation*}
\Sigma_{\mu}(\zeta):=\frac{1}{\sqrt{\mu}}\big(0,\sigma(u)\big)	,\ \ \ \zeta=(u,\eta)\in\mathcal{H}.
\end{equation*}

This means that, for every $\mu>0$ and every $(\mathfrak{u}_0,\mathfrak{v}_0)\in \H_{1}$, the adapted $\H_{1}$-valued process $\zeta_{\mu}=(u_\mu, \eta_\mu)$ is the unique solution of equation \eqref{abstract}, with $\zeta_{\mu}(0)=\big(\mathfrak{u}_0,\sqrt{\mu}\mathfrak{v}_0+g(\mathfrak{u}_0)/\sqrt{\mu}\big)$, if and only if the adapted $\H_{1}$-valued process \[z_\mu(t):=(u_{\mu}(t),v_{\mu}(t))=(u_\mu(t), \eta_\mu(t)/\sqrt{\mu}-g(u_\mu(t))/\mu),\ \ \ \ \ \ t\geq 0,\] is the unique solution of system \eqref{system}, with $z_\mu(0)=\mathfrak{z}_0:=(\mathfrak{u}_0,\mathfrak{v}_0)$.

In \cite{cerraixi} it has been proven that, under Hypotheses \ref{Hypothesis1}, \ref{Hypothesis2} and \ref{Hypothesis3}, without condition \eqref{L_F_condition}, 
for every $\zeta^0\in L^{2}(\Omega;\mathcal{H}_{1})$ and for every $\mu, T>0$, there exists a unique solution 
$\zeta_{\mu}\in L^{2}(\Omega;C([0,T];\mathcal{H}_{1}))$ for equation \eqref{abstract}, with $\zeta_{\mu}(0)=\zeta^0$.
In particular, this implies that for every $(\mathfrak{u}_0,\mathfrak{v}_0)\in L^{2}(\Omega;\H_{1})$, and for every $\mu, T>0$, there exists a unique solution $z_\mu\in L^{2}(\Omega;C([0,T];\mathcal{H}_{1}))$ to equation \eqref{system} with $z_\mu(0)=(\mathfrak{u}_0,\mathfrak{v}_0)$.

In what follows, we shall denote by 
$P_{t}^{\mu, \mathcal{H}_1}$, the transition semigroup associated with equation \eqref{system} in $\mathcal{H}_1$, which is defined as
\begin{equation*}
	P_{t}^{\mu, \mathcal{H}_1}\varphi(\mathfrak{z})=\mathbb{E}\,\varphi\big(z_{\mu}^{\mathfrak{z}}(t)\big),\ \ \ \ \  \mathfrak{z}\in\mathcal{H},\ \ \ t\geq0,
\end{equation*} 
for every $\varphi\in B_{b}(\mathcal{H}_1)$.


\subsection{Generalized solutions of equations \eqref{system} and \eqref{abstract}}

We start with the definition of generalized solution for equation \eqref{abstract}.
\begin{Definition}
For every $\mu,T>0$ and every $\zeta^0\in \mathcal{H}$, we say that $\zeta_{\mu}\in L^{2}(\Omega;C([0,T];\mathcal{H}))$ is a generalized solution of   problem
	\eqref{abstract}, with initial condition $\zeta^0$, 
	 if there exists a sequence $\{\zeta^0_n\}_{n\in\mathbb{N}}\subset\H_{1}$ converging to $\zeta^0$ in $\H$, as $n\to+\infty$, such that
	 \begin{equation*}
	 	\lim_{n\to +\infty}\zeta_{\mu,n}=\zeta_{\mu} \ \ \text{in}\ \  L^{2}(\Omega;C([0,T];\mathcal{H})),
	 \end{equation*}
 	where $\zeta_{\mu,n}\in L^{2}(\Omega;C([0,T];\H_{1}))$ is the unique solution of equation \eqref{abstract} with initial condition $\zeta^0_n$. 
\end{Definition}

First, we study the existence and uniqueness of generalized solutions of  equation \eqref{abstract} in the space $\mathcal{H}$.

	\begin{Lemma}\label{wellposedness_gen_z}
		Under Hypotheses \ref{Hypothesis1}, \ref{Hypothesis2} and \ref{Hypothesis3}, for every $\mu,T>0$ and every $\zeta^0\in\mathcal{H}$, there exists a unique generalized solution $\zeta_{\mu}\in L^{2}(\Omega;C([0,T];\mathcal{H}))$ for equation \eqref{abstract}, and such solution coincides with the classical solution when $\zeta^0 \in\,\mathcal{H}_1$.  Moreover, if $\zeta_{\mu}^{1}, \zeta_{\mu}^{2}$ are two generalized solutions of \eqref{abstract}, with initial conditions $\zeta^1, \zeta^2\in\H$, respectively, then 
		\begin{equation}\label{Feller_z}
			\E\sup_{t \in\,[0,T]}\Vert \zeta_{\mu}^{1}(t)-\zeta_{\mu}^{2}(t)\Vert_{\H}^{2}\leq e^{c_{\mu}T}\Vert \zeta^1-\zeta_{ 2}\Vert_{\H}^{2},
		\end{equation}
		for some constant $c_{\mu}$.
	\end{Lemma}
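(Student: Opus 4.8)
The plan is to reduce the whole statement to a single a priori estimate, namely \eqref{Feller_z} for \emph{classical} solutions (those with initial data in $\mathcal{H}_1$), and then to deduce existence, uniqueness and the coincidence with the classical solution by a completeness argument. Once \eqref{Feller_z} is available for classical solutions, if $\{\zeta^0_n\}\subset\mathcal{H}_1$ converges to $\zeta^0$ in $\mathcal{H}$ it is Cauchy in $\mathcal{H}$, so by \eqref{Feller_z} the corresponding classical solutions $\{\zeta_{\mu,n}\}$ form a Cauchy sequence in the Banach space $L^2(\Omega;C([0,T];\mathcal{H}))$, whose limit is by definition a generalized solution; this gives existence. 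Independence of the approximating sequence, and hence uniqueness, follows by interlacing two approximating sequences of the same $\zeta^0$ and applying \eqref{Feller_z} once more, while for $\zeta^0\in\mathcal{H}_1$ the constant sequence $\zeta^0_n\equiv\zeta^0$ shows that the generalized solution coincides with the classical one. Finally, \eqref{Feller_z} for generalized solutions is recovered by letting $n\to\infty$ in the same inequality written for the approximants.

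It thus remains to prove \eqref{Feller_z} for two classical solutions $\zeta^1_\mu=(u^1,\eta^1)$ and $\zeta^2_\mu=(u^2,\eta^2)$. Setting $\bar u:=u^1-u^2$ and $\bar\eta:=\eta^1-\eta^2$, from \eqref{abstract} one has
\[
\begin{cases}
d\bar u=\dfrac{1}{\sqrt\mu}\,\bar\eta\,dt-\dfrac{1}{\mu}\big(g(u^1)-g(u^2)\big)\,dt,\\[6pt]
d\bar\eta=\dfrac{1}{\sqrt\mu}\big(A\bar u+F(u^1)-F(u^2)\big)\,dt+\dfrac{1}{\sqrt\mu}\big(\sigma(u^1)-\sigma(u^2)\big)\,dw^Q.
\end{cases}
\]
I would apply It\^o's formula to $\|\bar u(t)\|_H^2$ and to $\|\bar\eta(t)\|_{H^{-1}}^2$ and add them. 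The decisive point is the cancellation of the two coupling terms: since $\langle\bar\eta,A\bar u\rangle_{H^{-1}}=\langle(-A)^{-1}\bar\eta,A\bar u\rangle_H=-\langle\bar\eta,\bar u\rangle_H$, the term $\frac{2}{\sqrt\mu}\langle\bar u,\bar\eta\rangle_H$ coming from the first equation is exactly compensated by $\frac{2}{\sqrt\mu}\langle\bar\eta,A\bar u\rangle_{H^{-1}}$ coming from the second, so that no term containing the unbounded operator $A$ survives. What remains is then estimated termwise: the $g$-contribution $-\frac{2}{\mu}\langle\bar u,g(u^1)-g(u^2)\rangle_H$ is nonpositive by monotonicity of $g$ (as $g'=\gamma\geq\gamma_0>0$) and may be dropped; the $F$-contribution is bounded using $\|F(u^1)-F(u^2)\|_{H^{-1}}\leq\alpha_1^{-1/2}\|F(u^1)-F(u^2)\|_H\leq\alpha_1^{-1/2}L_f\|\bar u\|_H$ and Young's inequality; and the It\^o correction $\frac{1}{\mu}\|\sigma(u^1)-\sigma(u^2)\|^2_{\mathcal{L}_2(H_Q,H^{-1})}$ is bounded by $\frac{L_\sigma}{\mu\alpha_1}\|\bar u\|_H^2$ via \eqref{sigma-lip} together with the embedding $\|\cdot\|_{H^{-1}}\leq\alpha_1^{-1/2}\|\cdot\|_H$.

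Collecting these bounds, the drift of $\|\bar u\|_H^2+\|\bar\eta\|_{H^{-1}}^2$ is dominated by $c_\mu(\|\bar u\|_H^2+\|\bar\eta\|_{H^{-1}}^2)$ for a constant $c_\mu$ that grows like $\mu^{-1}$ as $\mu\to 0$, which accounts for the $\mu$-dependence of the constant in \eqref{Feller_z}. To restore the supremum inside the expectation, I would take $\sup_{s\leq t}$ before taking expectations and control the martingale $\frac{2}{\sqrt\mu}\int_0^\cdot\langle\bar\eta,(\sigma(u^1)-\sigma(u^2))\,dw^Q\rangle_{H^{-1}}$ by the Burkholder--Davis--Gundy inequality, again invoking \eqref{sigma-lip} and absorbing the resulting term $\varepsilon\,\E\sup_{s\leq t}\|\bar\eta(s)\|_{H^{-1}}^2$ into the left-hand side; Gronwall's lemma then yields \eqref{Feller_z}. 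The genuinely delicate step is this a priori estimate, and within it the cross-term cancellation, which is precisely what allows the estimate to be closed in the weak topology of $\mathcal{H}=H\times H^{-1}$ despite the unbounded operator $A$ appearing in the equation for $\bar\eta$; the remaining care, needed to justify It\^o's formula in $H$ and $H^{-1}$ along the classical $\mathcal{H}_1$-valued trajectories (where $\bar\eta\in H$ and $A\bar u\in H^{-1}$), is standard.
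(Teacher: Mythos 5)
Your proposal is correct and follows essentially the same route as the paper: both reduce everything to the contraction estimate \eqref{Feller_z} for classical ($\mathcal{H}_1$-valued) solutions via It\^o's formula for $\|\cdot\|_{\mathcal{H}}^2$, and then obtain existence, uniqueness and coincidence with the classical solution by a Cauchy-sequence/completeness argument in $L^2(\Omega;C([0,T];\mathcal{H}))$, with the martingale term handled by Burkholder--Davis--Gundy and Gronwall. The only cosmetic difference is that you derive the key dissipativity inequality $\langle\mathcal{A}_\mu(\zeta)-\mathcal{A}_\mu(\theta),\zeta-\theta\rangle_{\mathcal{H}}\leq\omega\|\zeta-\theta\|_{\mathcal{H}}^2$ explicitly (cross-term cancellation plus monotonicity of $g$), whereas the paper cites it from \cite{cerraixi} as the quasi-$m$-dissipativity of $\mathcal{A}$ in $\mathcal{H}$.
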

	
	\begin{proof}  
	Without any loss of generality, we assume $\mu=1$, and for simplicity of notation, we  denote $\mathcal{A}_{1}$ and $\Sigma_1$ by $\mathcal{A}$ and $\Sigma$, respectively. In \cite{cerraixi}, it is proved that the operator $\A$ is quasi-$m$-dissipative in $\mathcal{H}$. Namely, 
	 there exists $\omega\geq0$ such that for every $\zeta,\theta\in D(\mathcal{A})$
		\begin{equation}
		\label{sm20}
			\Inner{\mathcal{A}(\zeta)-\mathcal{A}(\theta),\zeta-\theta}_{\mathcal{H}}\leq \omega\norm{\zeta-\theta}_{\mathcal{H}}^{2},
		\end{equation}
		and there exists $\lambda_{0}>0$ such that 
		\[
			\text{Range}(I-\lambda\mathcal{A})=\mathcal{H},\ \ \ \lambda\in(0,\lambda_{0}).
		\]

Now, let $\zeta^0\in \H$ and let $\{\zeta^0_n\}_{n\in\mathbb{N}}\subset \H_{1}$ be any sequence converging to $\zeta^0$ in $\H$. For every $n\in\mathbb{N}$, we denote by $\zeta_{n}$ the unique solution of equation \eqref{abstract} with initial condition $\zeta_{n}(0)=\zeta^0_n$. By applying  It\^o's formula, thanks to \eqref{sgfine1} and \eqref{sm20}, we get
		\begin{equation}\label{sm21}
			\begin{array}{l}
				\ds{\frac{1}{2}d\norm{\zeta_{n}(t)-\zeta_{m}(t)}_{\H}^{2} }\\[10pt]
	\ds{\hslp=\Inner{\A(\zeta_{n}(t))-\A(\zeta_{m}(t)),\zeta_{n}(t)-\zeta_{m}(t)}_{\H}dt+\frac{1}{2}\norm{\Sigma(\zeta_{n}(t))-\Sigma(\zeta_{m}(t))}_{\mathcal{L}_{2}(H_{Q},\H)}^{2}dt }\\
				[10pt]
				\ds{\hsp\hslp +\Inner{\zeta_{n}(t)-\zeta_{m}(t),\big[\Sigma(\zeta_{n}(t))-\Sigma(\zeta_{m}(t))\big]dw^{Q}(t)}_{\H} }\\
				[10pt]
			\ds{\hslp\leq c\norm{\zeta_{n}(t)-\zeta_{m}(t)}_{\H}^{2}dt+\Inner{\zeta_{n}(t)-\zeta_{m}(t),\big[\Sigma(\zeta_{n}(t))-\Sigma(\zeta_{m}(t))\big]dw^{Q}(t)}_{\H} }.
			\end{array}
		\end{equation}
		Due to \eqref{sigma-lip} we have
	\[	\begin{array}{l}
		\ds{\mathbb{E}\sup_{s \in\,[0,t]}\left \vert\int_0^s \Inner{\zeta_{n}(r)-\zeta_{m}(r),\big[\Sigma(\zeta_{n}(r))-\Sigma(\zeta_{m}(r))\big]dw^{Q}(r)}_{\H}\right\vert	}\\[14pt]
		\ds{\leq c\,\mathbb{E}\left(\int_0^t\Vert \zeta_n(s)-\zeta_m(s)\Vert^4_{\mathcal{H}}\,ds\right)^{\frac 12}\leq \frac 14\, \mathbb{E}\sup_{s \in\,[0,t]}\Vert \zeta_n(s)-\zeta_m(s)\Vert_{\mathcal{H}}^2+c\,\int_0^t\mathbb{E}\Vert \zeta_n(s)-\zeta_m(s)\Vert_{\mathcal{H}}^2\,ds.}
		\end{array}\]
Thus, if we first integrate both sides in \eqref{sm21} with respect to time, and then take the supremum and the expectation, we get
	\begin{equation*}
		\E\sup_{s \in\,[0,t]} \norm{\zeta_{n}(s)-\zeta_{m}(s)}_{\H}^{2}\leq \norm{\zeta^0_n-\zeta^0_m}_{\H}^{2}+c\,\int_{0}^{t}\E\norm{\zeta_{n}(s)-\zeta_{m}(s)}_{\H}^{2}ds,
	\end{equation*}
	and the Gronwall's lemma gives 
	\[
		\E\sup_{s \in\,[0,t]} \norm{\zeta_{n}(s)-\zeta_{m}(s)}_{\H}^{2}\leq e^{c\, t }\norm{\zeta^0_n-\zeta^0_m}_{\H}^{2},\ \ \ \ \   t\geq0,
	\]
	for some constant $c$.
	In particular, this implies that the sequence $\{\zeta_{n}\}_{n\in\mathbb{N}}$ is Cauchy in the space $L^{2}(\Omega;C([0,T];\H))$, so there exists a limit $\zeta\in L^{2}(\Omega;C([0,T];\H))$. It is easy to see that the limit $\zeta$ does not depend on the choice of the sequence $\{\zeta^0_n\}\subset \H_{1}$, which implies the uniqueness of generalized solutions. Finally, by using a similar argument as above, we obtain \eqref{Feller_z}.
	\end{proof}

	\begin{Remark}\label{coincide}
		{\em When $\zeta^0\in \H_{1}$, the unique generalized solution $\zeta_{\mu}$ of equation \eqref{abstract} coincides with its unique classical solution.}
	\end{Remark}

Now, we consider equation \eqref{system} in the space $\mathcal{H}$.

\begin{Definition}
	For every $\mu,T>0$ and every $(\mathfrak{u}_0,\mathfrak{v}_0)\in \mathcal{H}$, we say that the process  $z_{\mu}\in L^{2}(\Omega;C([0,T];\mathcal{H}))$ is a generalized solution of system \eqref{system} if there exists a sequence $\{\mathfrak{u}_{0,n},\mathfrak{v}_{0,n}\}_{n\in\mathbb{N}}\subset\H_{1}$ converging to $(\mathfrak{u}_0,\mathfrak{v}_0)$ in $\H$, as $n\to+\infty$, such that
	\begin{equation*}
		\lim_{n\to +\infty}z_{\mu,n}=z_\mu \ \ \text{in}\ \  L^{2}(\Omega;C([0,T];\mathcal{H})),
	\end{equation*}
	where $z_{\mu, n}\in L^{2}(\Omega;C([0,T];\H_{1}))$ is the unique solution of equation \eqref{system} with initial conditions $(\mathfrak{u}_{0,n},\mathfrak{v}_{0,n})$. 
\end{Definition}

As we have seen above, due to Hypothesis \ref{Hypothesis2},  $z_\mu=(u_{\mu},v_{\mu})$ is a generalized solution to \eqref{system} with initial condition $\mathfrak{z}_0=(\mathfrak{u}_0,\mathfrak{v}_0)$ if and only if  $\zeta_{\mu}=\big(u_{\mu},\sqrt{\mu}v_{\mu}+g(u_{\mu})/\sqrt{\mu}\big)$ is a generalized solution for  system \eqref{abstract} with initial condition $\zeta_{\mu}(0)=(\mathfrak{u}_0,\sqrt{\mu}\mathfrak{v}_0+g(\mathfrak{u}_0)/\sqrt{\mu})$. In this case, we have 
\begin{equation}\label{transform}
	u_{\mu}=\Pi_{1}\zeta_{\mu},\ \ \ \ v_{\mu}=\frac{1}{\mu}\Big(-g(u_{\mu})+\sqrt{\mu}\,\Pi_{2}\zeta_{\mu}\Big),\ \ \ \mu>0.
\end{equation}

Thus, as a consequence of Lemma \ref{wellposedness_gen_z} and  Remark \ref{coincide}, we have the following result.
\begin{Proposition}\label{wellposedness_old}
	Fix $(\mathfrak{u}_0,\mathfrak{v}_0)\in \mathcal{H}$ and assume Hypotheses \ref{Hypothesis1}, \ref{Hypothesis2} and \ref{Hypothesis3}. Then, for every $\mu, T>0$ there exists a unique generalized solution $z_\mu\in L^{2}(\Omega;C([0,T];\mathcal{H}))$ for system \eqref{system}.  In particular, when $(\mathfrak{u}_0,\mathfrak{v}_0)\in \H_{1}$, the unique generalized solution coincides with the unique classical solution.
\end{Proposition}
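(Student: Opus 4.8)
The plan is to deduce the statement entirely from Lemma~\ref{wellposedness_gen_z}, Remark~\ref{coincide} and the nonlinear change of variables \eqref{transform} that conjugates the two formulations, rather than to redo any fixed-point or monotonicity argument directly for \eqref{system}. I would first record the two maps
\[
\Psi_\mu(u,v):=\Big(u,\sqrt{\mu}\,v+\frac{g(u)}{\sqrt{\mu}}\Big),
\qquad
\Phi_\mu(u,\eta):=\Big(u,\frac1\mu\big(-g(u)+\sqrt{\mu}\,\eta\big)\Big),
\]
which are mutually inverse. Since $\gamma\in C^1_b(\mathbb{R})$ with $\gamma\leq\gamma_1$, the scalar primitive $g$ is globally Lipschitz, so $u\mapsto g(u)$ is Lipschitz from $H$ into $H$, and hence (recalling $H\hookrightarrow H^{-1}$) both $\Psi_\mu$ and $\Phi_\mu$ are globally Lipschitz maps of $\mathcal{H}$ onto itself that restrict to homeomorphisms of $\mathcal{H}_1$. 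By the equivalence recalled just before \eqref{transform}, a process with datum in $\mathcal{H}_1$ solves \eqref{system} classically if and only if its image under $\Psi_\mu$ solves \eqref{abstract} classically; so the whole task reduces to transporting the generalized well-posedness of \eqref{abstract} through $\Phi_\mu$.

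For existence I would set $\zeta^0:=\Psi_\mu(\mathfrak{u}_0,\mathfrak{v}_0)$. Because $g(\mathfrak{u}_0)\in H\subset H^{-1}$, this lies in $\mathcal{H}$, so Lemma~\ref{wellposedness_gen_z} furnishes the unique generalized solution $\zeta_\mu\in L^2(\Omega;C([0,T];\mathcal{H}))$ of \eqref{abstract}. I would then define $z_\mu:=\Phi_\mu(\zeta_\mu)$, that is $u_\mu=\Pi_1\zeta_\mu$ and $v_\mu=\mu^{-1}(-g(u_\mu)+\sqrt{\mu}\,\Pi_2\zeta_\mu)$; the Lipschitz continuity of $g:H\to H$ gives $g(u_\mu)\in L^2(\Omega;C([0,T];H))\subset L^2(\Omega;C([0,T];H^{-1}))$, so $z_\mu\in L^2(\Omega;C([0,T];\mathcal{H}))$.

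To check that $z_\mu$ is a \emph{generalized} solution, I would take the sequence $\{\zeta^0_n\}\subset\mathcal{H}_1$, $\zeta^0_n\to\zeta^0$ in $\mathcal{H}$, realizing $\zeta_\mu$ as a generalized solution, with classical solutions $\zeta_{\mu,n}\to\zeta_\mu$ in $L^2(\Omega;C([0,T];\mathcal{H}))$, and put $(\mathfrak{u}_{0,n},\mathfrak{v}_{0,n}):=\Phi_\mu(\zeta^0_n)\in\mathcal{H}_1$. By the classical equivalence, the classical solution $z_{\mu,n}$ of \eqref{system} with these data equals $\Phi_\mu(\zeta_{\mu,n})$. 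Continuity of $\Phi_\mu$ on $\mathcal{H}$ yields $(\mathfrak{u}_{0,n},\mathfrak{v}_{0,n})\to(\mathfrak{u}_0,\mathfrak{v}_0)$ in $\mathcal{H}$ (here $g(\mathfrak{u}_{0,n})\to g(\mathfrak{u}_0)$ in $H$ and the two copies of $g(\mathfrak{u}_0)$ cancel, leaving $\mathfrak{v}_{0,n}\to\mathfrak{v}_0$ in $H^{-1}$), while the bound $\norm{g(h_1)-g(h_2)}_H\leq\gamma_1\norm{h_1-h_2}_H$ together with $\Pi_i\zeta_{\mu,n}\to\Pi_i\zeta_\mu$ gives $z_{\mu,n}\to\Phi_\mu(\zeta_\mu)=z_\mu$ in $L^2(\Omega;C([0,T];\mathcal{H}))$. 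Hence $z_\mu$ is a generalized solution of \eqref{system}.

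Uniqueness I would get by running the correspondence backwards: any generalized solution of \eqref{system} with datum $(\mathfrak{u}_0,\mathfrak{v}_0)$ is mapped by the continuous $\Psi_\mu$ to a generalized solution of \eqref{abstract} with datum $\zeta^0$, which is unique by Lemma~\ref{wellposedness_gen_z}; applying $\Phi_\mu$ recovers uniqueness for \eqref{system}. The coincidence with the classical solution when $(\mathfrak{u}_0,\mathfrak{v}_0)\in\mathcal{H}_1$ follows at once, since then $\zeta^0\in\mathcal{H}_1$, Remark~\ref{coincide} identifies $\zeta_\mu$ with the classical solution of \eqref{abstract}, and $z_\mu=\Phi_\mu(\zeta_\mu)$ is the classical solution of \eqref{system}. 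The one genuinely delicate point, which I would treat most carefully, is that the change of variables is nonlinear through $g$: everything rests on the global Lipschitz continuity of $g:H\to H$ (equivalently the boundedness of $\gamma$), which is exactly what ensures $\Phi_\mu,\Psi_\mu$ are continuous on both $\mathcal{H}$ and $\mathcal{H}_1$ and therefore carry approximating sequences to approximating sequences, so that the notion of generalized solution is preserved in both directions.
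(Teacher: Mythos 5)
Your argument is exactly the one the paper intends: Proposition \ref{wellposedness_old} is stated there as an immediate consequence of Lemma \ref{wellposedness_gen_z}, Remark \ref{coincide} and the change of variables \eqref{transform}, and you have simply made explicit the transport of generalized well-posedness through the mutually inverse, globally Lipschitz maps $\Psi_\mu,\Phi_\mu$ (whose continuity on $\mathcal{H}$ rests, as you correctly isolate, on the Lipschitz continuity of $g:H\to H$ coming from the boundedness of $\gamma$). The proposal is correct and takes essentially the same route as the paper.
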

This allows to introduce the transition semigroup  associated with equation \eqref{system} in $\mathcal{H}$, which will be denoted by ${P}_{t}^{\mu, \mathcal{H}}$. Clearly, if $\varphi \in\,B_b(\mathcal{H})$,   for every $\mu>0$ we have
\[
	{P}_{t}^{\mu, \mathcal{H}_1}\varphi(\mathfrak{z})=P_{t}^{\mu, \mathcal{H}}\varphi(\mathfrak{z}),\ \ \ \ \   \mathfrak{z}\in\H_{1},\ \ \ t\geq 0.
\]

\subsection{Existence of  invariant measures for equation \eqref{system}}

\begin{Proposition}\label{sys_invariant}
	Assume Hypotheses \ref{Hypothesis1}, \ref{Hypothesis2} and \ref{Hypothesis3}. Then, for every $\mu>0$, the semigroup $ P_t^{\mu, \mathcal{H}}$ admits an invariant measure $\nu^{\mathcal{H}}_{\mu}$ in $\mathcal{H}$, with $\text{supp}\,(\nu_{\mu}^{\mathcal{H}})\subset \mathcal{H}_{1}$.
\end{Proposition}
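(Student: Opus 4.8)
The plan is to obtain $\nu^{\mathcal{H}}_\mu$ through the Krylov--Bogoliubov method applied to the Markov semigroup $P_t^{\mu,\mathcal{H}}$. The Feller property is essentially free: by estimate \eqref{Feller_z} in Lemma \ref{wellposedness_gen_z}, for each fixed $\mu,t>0$ the map $\mathfrak{z}\mapsto z_\mu^{\mathfrak{z}}(t)$ is Lipschitz from $\mathcal{H}$ into $L^{2}(\Omega;\mathcal{H})$, so $P_t^{\mu,\mathcal{H}}$ maps $C_b(\mathcal{H})$ into itself. Fixing the initial datum $\mathfrak{z}_0=0\in\mathcal{H}_1$ and setting
\[
\mu_T:=\frac1T\int_0^T \big(P_t^{\mu,\mathcal{H}}\big)^\star\delta_{0}\,dt,\qquad T>0,
\]
it then suffices to show that $\{\mu_T\}_{T>0}$ is tight in $\mathcal{H}$, for then any weak limit point along $T\to\infty$ is invariant for $P_t^{\mu,\mathcal{H}}$. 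Since the embedding $\mathcal{H}_1\hookrightarrow\mathcal{H}$ is compact, tightness (and the support statement) will follow from a uniform-in-time moment bound
\[
\sup_{t\geq 0}\,\E\,\|z_\mu^{0}(t)\|_{\mathcal{H}_1}^2\leq C_\mu<\infty, \tag{$\ast$}
\]
via Chebyshev's inequality applied to the relatively compact sublevel sets $\{\|\cdot\|_{\mathcal{H}_1}\leq R\}$.

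The core of the argument is therefore $(\ast)$, which I would establish by a Lyapunov functional on $\mathcal{H}_1$. Writing $z_\mu=(u_\mu,v_\mu)$ as in \eqref{system}, the mechanical energy
\[
\mathcal{E}(u,v):=\frac{\mu}{2}\|v\|_H^2+\frac12\|u\|_{H^1}^2-\int_{\mathcal{O}}\mathfrak{f}(x,u(x))\,dx
\]
satisfies, after an application of It\^o's formula in which all conservative cross terms cancel,
\[
\frac{d}{dt}\,\E\,\mathcal{E}(z_\mu(t))=-\E\,\langle \gamma(u_\mu(t))v_\mu(t),v_\mu(t)\rangle_H+\frac{1}{2\mu}\,\E\,\|\sigma(u_\mu(t))\|_{\mathcal{L}_2(H_Q,H)}^2 ,
\]
so that, by $\gamma(\cdot)\geq\gamma_0$ and the boundedness \eqref{sgfine2} of $\sigma$, the energy dissipates only in the velocity. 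To recover dissipation in the position I would add the hypocoercive cross term $\beta\mu\langle u,v\rangle_H$, setting $V:=\mathcal{E}+\beta\mu\langle u,v\rangle_H$ for a sufficiently small $\beta>0$; the identity $\langle u,Au\rangle_H=-\|u\|_{H^1}^2$ then produces a genuinely negative $-\beta\|u_\mu\|_{H^1}^2$ term in $\tfrac{d}{dt}\E\,V$. Using the Poincar\'e inequality $\|u\|_H^2\leq\alpha_1^{-1}\|u\|_{H^1}^2$, the sublinear growth \eqref{sm1} of $F$, the two-sided bound $\gamma_0\leq\gamma\leq\gamma_1$, and crucially the smallness condition \eqref{L_F_condition}, one checks that $V$ is coercive, with $V\gtrsim\|u\|_{H^1}^2+\mu\|v\|_H^2$, and that $\tfrac{d}{dt}\E\,V(z_\mu(t))\leq -c_1\,\E\,V(z_\mu(t))+c_2$ for constants $c_1,c_2>0$ depending on $\mu$. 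Gronwall's lemma then gives $\sup_{t\geq0}\E\,V(z_\mu(t))<\infty$, hence $(\ast)$.

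With $(\ast)$ in hand, $\{\mu_T\}$ is tight, so along a sequence $T_n\to\infty$ it converges weakly to some $\nu^{\mathcal{H}}_\mu\in\mathcal{P}(\mathcal{H})$, and the Feller property forces $\nu^{\mathcal{H}}_\mu$ to be invariant for $P_t^{\mu,\mathcal{H}}$. Finally, since $\|\cdot\|_{\mathcal{H}_1}$ is lower semicontinuous for weak convergence in $\mathcal{H}$, passing to the limit in $(\ast)$ yields $\int_{\mathcal{H}}\|z\|_{\mathcal{H}_1}^2\,\nu^{\mathcal{H}}_\mu(dz)\leq C_\mu<\infty$, whence $\nu^{\mathcal{H}}_\mu(\mathcal{H}_1)=1$, which is the asserted support property.

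The main obstacle is precisely the uniform-in-time estimate $(\ast)$: the bare energy $\mathcal{E}$ controls only the velocity, and the state dependence of $\gamma$ blocks the more transparent computations available in the constant-friction case of \cite{cerraiglatt}. Making the modified energy $V$ simultaneously coercive and strictly dissipative in both components is exactly where the two-sided bound $\gamma_0\leq\gamma\leq\gamma_1$ and the spectral smallness condition \eqref{L_F_condition} are used, with $\beta$ tuned small enough that the indefinite cross term $\beta\mu\langle u,v\rangle_H$ is absorbed by the diagonal dissipation without destroying coercivity.
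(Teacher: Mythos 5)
Your proposal is correct and follows the same Krylov--Bogoliubov skeleton as the paper (Feller property from \eqref{Feller_z}, Ces\`aro averages of $(P_t^{\mu,\mathcal{H}})^\star\delta_0$, tightness via the compact embedding $\mathcal{H}_1\hookrightarrow\mathcal{H}$ and Chebyshev, lower semicontinuity of $\norm{\cdot}_{\mathcal{H}_1}$ for the support statement), but you obtain the key quantitative input by a genuinely different route. The paper does not prove a uniform-in-time bound $\sup_{t\geq0}\E\norm{z_\mu^0(t)}_{\mathcal{H}_1}^2<\infty$ at this stage; it only needs tightness of the time averages, and for that the linearly-growing bounds of Lemma \ref{enegy} suffice, since
\begin{equation*}
\Gamma^\mu_{t}(0,B_{R}^{c})\leq \frac{1}{tR^{2}}\int_{0}^{t}\E\norm{z_\mu^0(s)}_{\mathcal{H}_{1}}^{2}\,ds\leq \frac{c}{R^{2}}
\end{equation*}
follows directly from \eqref{sys_ene1}--\eqref{sys_ene2}, whose right-hand sides are $O(t)$. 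You instead build a hypocoercive Lyapunov functional $V=\mathcal{E}+\beta\mu\langle u,v\rangle_H$ and run a Gronwall argument to get a bound uniform in $t$. Your computation checks out: the conservative terms do cancel in $d\mathcal{E}$, the cross term produces $-\beta\norm{u_\mu}_{H^1}^2$, the state-dependent friction only enters through the two-sided bound $\gamma_0\leq\gamma\leq\gamma_1$ in estimating $-\beta\langle\gamma(u)v,u\rangle_H$, and both coercivity of $V$ and absorption of the lower-order terms use $L_f<\alpha_1\gamma_0/\gamma_1\leq\alpha_1$ from \eqref{L_F_condition}, exactly as the paper's Lemma \ref{enegy} does. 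Your route is somewhat more work but yields a strictly stronger conclusion (a pointwise-in-time second moment bound in $\mathcal{H}_1$ for fixed $\mu$); the paper's route is more economical because it recycles estimates it needs anyway for the small-mass limit. Either way the invariance of the weak limit and the bound $\nu_\mu^{\mathcal{H}}(B_R^c)\leq c/R^2$ giving $\text{supp}\,(\nu_\mu^{\mathcal{H}})\subset\mathcal{H}_1$ go through as you describe.
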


\begin{proof} 	First, if $z_\mu^{\mathfrak{z}_1}$ and $z_\mu^{\mathfrak{z}_2}$ are generalized solutions to system \eqref{system}, with initial conditions $\mathfrak{z}_1, \mathfrak{z}_2\in \mathcal{H}$, respectively, then due to \eqref{Feller_z} and \eqref{transform}, it is easy to see that for every $t\geq0$
	\begin{equation*}
		\E\norm{z_\mu^{\mathfrak{z}_1}(t)-z_\mu^{\mathfrak{z}_2}(t))}_{\mathcal{H}}\leq c_\mu(t)\norm{\mathfrak{z}_1-\mathfrak{z}_2}_{\mathcal{H}},
	\end{equation*} 
	for some $c_\mu(t)>0$. This means that the transition semigroup $P^{\mu, \mathcal{H}}_{t}$ is Feller on $\mathcal{H}$.
	
	Now, for every $\mathfrak{z} \in\,\mathcal{H}$ we introduce the  following family of measures on $\mathcal{H}$
	\begin{equation*}
		\Gamma^\mu_{t}(\mathfrak{z},\cdot):=\frac{1}{t}\int_{0}^{t}(P^{\mu, \mathcal{H}}_{t})^\star\delta_\mathfrak{z}\,dt,\ \ \ \ \ \ t>0,
	\end{equation*}
	and for every $R>0$ we define the set
	\begin{equation*}
		B_{R}:=\Big\{\mathfrak{z}\in \mathcal{H}_{1}:\norm{\mathfrak{z}}_{\mathcal{H}_{1}}\leq R\Big\}.
	\end{equation*}
Then, from \eqref{sys_ene1} and \eqref{sys_ene2} with $\mu=1$, we have
	\begin{equation*}
		\Gamma^\mu_{t}(0,B_{R}^{c})=\frac{1}{t}\int_{0}^{t}\mathbb{P}\big(\norm{z_\mu^0(s)}_{\H_{1}}>R\big)ds\leq \frac{c}{R^{2}},\ \ \ t>0,\ \ \ \ \ \ R>0,
	\end{equation*}
	and, due to the compactness of the embedding of $\mathcal{H}_1$ into $\mathcal{H}$, this implies that the family of measures $\{\Gamma^\mu_{t}(0,\cdot)\}_{t\geq0}$ is tight in $\mathcal{H}$. By the Prokhorov theorem, there exists some sequence $t_n\uparrow \infty$ such that $\Gamma^\mu_{t_n}(0,\cdot)$ converges weakly, as $n\to+\infty$, to a probability measure $\nu_\mu^{\mathcal{H}}$ that is invariant for  $P^{\mu, \mathcal{H}}_{t}$. Moreover, since
	\begin{equation*}
		\nu_\mu^{\mathcal{H}}(B_{R}^{c})\leq \frac{c}{R^{2}},\ \ \   R>0,
	\end{equation*}
	it follows that $\text{supp}\,(\nu_\mu^{ \mathcal{H}})\subset\mathcal{H}_{1}$.

\end{proof}

\begin{Remark}
{\em 	
Since $\text{supp}\,(\nu_{\mu}^{\mathcal{H}})\subset \H_{1}$ and $\B(\H_{1})\subset \B(\H)$, we have that $\nu^{\mathcal{H}}_\mu$  is also a probability measure on $\H_{1}$. In what follows,  it will be convenient to denote the restriction of $\nu^{\mathcal{H}}_\mu$ to $\mathcal{H}_1$ by  $\nu^{\mathcal{H}_1}_{\mu}$.}
\end{Remark}

\subsection{Statement of the main result}

Given a lower semicontinuous metric $\alpha$ on $H^{-1}$, it is possible to introduce the  distance $\mathcal{W}_{\alpha}:\mathcal{P}(H^{-1})\times\mathcal{P}(H^{-1})\to[0,+\infty]$ defined by
\begin{equation*}
	\mathcal{W}_{\alpha}(\nu_{1},\nu_{2})=\sup_{[\varphi]_{\text{Lip}_{H^{-1}}^{\alpha}}\leq 1}\,\left\lvert \int_{H^{-1}} \varphi(\mathfrak{r})\nu_{1}(d\mathfrak{r})-\int_{H^{-1}}\varphi(\mathfrak{r})\nu_{2}(d\mathfrak{r})\right\rvert,\end{equation*}
where 
\begin{equation*}
	[\varphi]_{\text{Lip}_{H^{-1}}^{\alpha}}=\sup_{\substack{\mathfrak{r}_1, \mathfrak{r}_2 \in\,H \\ \mathfrak{r}_1\neq \mathfrak{r}_2}}\frac{\abs{\varphi(\mathfrak{r}_1)-\varphi(\mathfrak{r}_2)}}{\alpha(\mathfrak{r}_1,\mathfrak{r}_2)}.
\end{equation*}
Notice that the following Kantorovich-Rubinstein identity holds
\begin{equation}\label{KR-id}
	\mathcal{W}_{\alpha}(\nu_{1},\nu_{2})=\inf_{\lambda \in\,\mathcal{C}(\nu_1,\nu_2)}\int\int\alpha(\mathfrak{r}_1,\mathfrak{r}_2)\,\lambda(d\mathfrak{r}_1,d\mathfrak{r}_2),
\end{equation}
where $\mathcal{C}(\nu_1, \nu_2)$ is the set of all couplings of $(\nu_{1},\nu_{2})$. Notice that it is possible to prove that the infimum above is attained at some $\bar{\lambda}$.

Throughout the rest of this paper, we will prove that the following result holds.

\begin{Theorem}
\label{main-teo}	 Assume Hypotheses \ref{Hypothesis1} to \ref{Hypothesis_control}, and define  
\[\alpha(\mathfrak{u}_1,\mathfrak{u}_2):=\norm{\mathfrak{u}_1-\mathfrak{u}_2}_{H^{-1}},\ \ \ \ \ \ \mathfrak{u}_1, \mathfrak{u}_2\in H^{-1}.\]
Then we have
	\[
\lim_{\mu\to 0} \mathcal{W}_\alpha\left(\Pi_1 \nu_\mu^{\mathcal{H}},		\nu\right)=0,
	\]
	where $\nu$ is the unique invariant measure for $P^H_t$, the transition semigroup associated to the limiting equation \eqref{limiting_problem_intro}. Moreover, 
	\[\lim_{\mu\to 0}\Pi_1 \nu_\mu^{\mathcal{H}}=\nu,\ \ \ \ \ \text{weakly in}\ H.\]

\end{Theorem}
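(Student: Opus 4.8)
The plan is to avoid working directly with the quasilinear limit \eqref{limiting_problem_intro} in $H$, where neither the Feller property of $P^H_t$ nor a Krylov--Bogoliubov construction of an invariant measure is available, and instead to pass to the variable $\rho=g(u)$. This turns \eqref{limiting_problem_intro} into the divergence-form equation \eqref{limit_para_intro}, which I will study in the weaker space $H^{-1}$, where a contraction estimate is within reach. Since $g$ induces a homeomorphism of $H$ and of $H^1$, all objects transfer back to the original variable at the end.

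The first analytic step is the contraction estimate \eqref{contraction_intro}. I would apply It\^o's formula to $\Vert \rho^{\mathfrak{r}_1}(t)-\rho^{\mathfrak{r}_2}(t)\Vert_{H^{-1}}^2$: the dissipative contribution comes from the difference of the divergence terms, and exploiting the divergence structure in the $H^{-1}$ pairing produces a coercive term bounded below in terms of $\alpha_1$ and the ellipticity ratio $\gamma_0/\gamma_1$ (via \eqref{sm3}), while the contributions of $F$ and of the quadratic-variation term are controlled by $L_f$ and $L_\sigma/(2\gamma_0)$. Hypothesis \ref{Hypothesis_control}, that is $L_f+L_\sigma/(2\gamma_0)<\alpha_1\gamma_0/\gamma_1$, is exactly what forces the dissipation to dominate and yields the exponential rate $\lambda>0$. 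From \eqref{contraction_intro} I then read off that $R^{H^{-1}}_t$ is Feller and satisfies the Lipschitz bound \eqref{sm9_intro}; combined with uniform $H^1$ moment estimates and the resulting tightness (the embedding $H^1\hookrightarrow H^{-1}$ being compact), Krylov--Bogoliubov gives a unique invariant measure $\nu^{H^{-1}}$ supported in $H^1$. Its restriction $\nu^H$ is the unique invariant measure of $R^H_t$, and by the equivalence of \eqref{limiting_problem_intro} and \eqref{limit_para_intro} it corresponds to the unique invariant measure $\nu$ of $P^H_t$.

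Next, following the abstract scheme of \cite{FoldesGlattHoltzRichardsThomann2013} already used in \cite{cerraiglatt}, I would compare $[(\Pi_1\nu_\mu^{\mathcal{H}})\circ g^{-1}]'$ with $\nu^{H^{-1}}$ by inserting, as an intermediate term, $(R^{H^{-1}}_{\bar t})^\star$ applied to $[(\Pi_1\nu_\mu^{\mathcal{H}})\circ g^{-1}]'$ and using the invariance of both $\nu_\mu^{\mathcal{H}}$ and $\nu^{H^{-1}}$. Estimate \eqref{sm9_intro}, evaluated at a fixed $\bar t$ chosen so that $c\,e^{-\lambda\bar t}\leq 1/2$, lets me absorb the self-referential term, so that
\[
\mathcal{W}_\alpha\Big([(\Pi_1\nu_\mu^{\mathcal{H}})\circ g^{-1}]',\nu^{H^{-1}}\Big)\leq 2\,\mathcal{W}_\alpha\Big([\Pi_1((P^{\mu,\mathcal{H}}_{\bar t})^\star\nu_\mu^{\mathcal{H}})\circ g^{-1}]',(R^{H^{-1}}_{\bar t})^\star[(\Pi_1\nu_\mu^{\mathcal{H}})\circ g^{-1}]'\Big).
\]
By Kantorovich--Rubinstein duality the right-hand side is bounded by $\mathbb{E}\,\Vert g(u_\mu^{\zeta_\mu}(\bar t))-\rho^{\,g(\xi_\mu)}(\bar t)\Vert_{H^{-1}}$, where $\zeta_\mu=(\xi_\mu,\eta_\mu)$ is an $\mathcal{H}_1$-valued random variable distributed as $\nu_\mu^{\mathcal{H}}$.

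The crux, and the step I expect to be the main obstacle, is then the small-mass Smoluchowski--Kramers estimate
\[
\lim_{\mu\to 0}\mathbb{E}\,\Vert g(u_\mu^{\zeta_\mu}(t))-\rho^{\,g(\xi_\mu)}(t)\Vert_{H^{-1}}=0
\]
for $t$ large enough. Two features make this delicate: the initial datum is random \emph{and} its law $\nu_\mu^{\mathcal{H}}$ varies with $\mu$, so I must exploit the uniform-in-$\mu$ moment bounds for $\nu_\mu^{\mathcal{H}}$ and $\nu_\mu^{\mathcal{H}_1}$ to keep the data tight; and the convergence must take place in the weak $H^{-1}$ topology, rather than in $H$ where the pathwise bound \eqref{lim_intro_p} is known. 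I would adapt the estimates of \cite{cerraixi}, tracking the auxiliary variable $\eta_\mu=(\mu\,\partial_t u_\mu+g(u_\mu))/\sqrt{\mu}$ and showing that $g(u_\mu)$ solves \eqref{limit_para_intro} up to corrector terms that vanish in $L^1(\Omega;H^{-1})$ as $\mu\to 0$, uniformly over the stationary data. Once this limit is in hand for the chosen $\bar t$, the triangle inequality gives $\lim_{\mu\to 0}\mathcal{W}_\alpha([(\Pi_1\nu_\mu^{\mathcal{H}})\circ g^{-1}]',\nu^{H^{-1}})=0$; undoing the homeomorphism $g$ and using the lower semicontinuity of $\alpha$ then yields both $\lim_{\mu\to 0}\mathcal{W}_\alpha(\Pi_1\nu_\mu^{\mathcal{H}},\nu)=0$ and the weak convergence $\Pi_1\nu_\mu^{\mathcal{H}}\to\nu$ in $H$.
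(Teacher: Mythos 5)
Your proposal follows essentially the same route as the paper: pass to $\rho=g(u)$, prove the $H^{-1}$ contraction under Hypothesis \ref{Hypothesis_control} to get the unique invariant measure $\nu^{H^{-1}}$ supported in $H^1$, run the F\"oldes--Glatt-Holtz--Richards--Thomann triangle-inequality/absorption argument with Kantorovich--Rubinstein duality, reduce to the small-mass limit $\mathbb{E}\Vert g(u_\mu^{\zeta_\mu}(\bar t))-\rho^{\,g(\xi_\mu)}(\bar t)\Vert_{H^{-1}}\to 0$ for stationary data via the uniform moment bounds on $\nu_\mu^{\mathcal{H}}$ and the corrector $\mu\,v_\mu$, and finally undo the homeomorphism $g$ and use tightness in $H^\delta$, $\delta<1$, to get weak convergence in $H$. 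The plan is correct and matches the paper's proof in all essential respects.
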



\section{Some uniform bounds}\label{sec_sys_invariant}

In what follows, we are going to show that for every $\mu>0$ equation \eqref{system} has an invariant measure and we will prove some uniform bounds for the moments of such family of invariant measures.

To this purpose, we need to start with suitable uniform bounds for 
 the solution $(u_{\mu},v_{\mu})$ of system \eqref{system}. Some of them   have been already proved in \cite[Proposition 4.2, Remark 4.3]{cerraixi}. In what follows, we show how those bounds depend on time and on  random initial conditions in $L^2(\Omega;\mathcal{H}_1)$.

\begin{Lemma}\label{enegy}
	  Assume Hypotheses \ref{Hypothesis1}, \ref{Hypothesis2} and \ref{Hypothesis3}, and fix  $(\xi,\eta)\in L^{2}(\Omega;\mathcal{H}_{1})$. For every $\mu, T>0$, let $(u_{\mu},v_{\mu})\in L^{2}(\Omega;C([0,T];\mathcal{H}_{1}))$ be the unique solution to system \eqref{system} with initial conditions $(\xi, \eta)$. Then there exist two constants $\mu_{0}\in (0,1)$ and $c>0$, independent of  $T>0$, such that for every $\mu\in(0,\mu_{0})$ and $t \in\,[0,T]$
	\begin{align} \begin{split}\label{sys_ene1}
		\E\sup_{s\in[0,t]}&\norm{u_{\mu}(s)}_{H^{1}}^{2}+\mu\,\E\sup_{s\in[0,t]}\norm{v_{\mu}(s)}_{H}^{2}+\int_{0}^{t}\E\norm{v_{\mu}(s)}_{H}^{2}ds\\[10pt]
		&\hslp\leq c\,\left(\frac{t}{\mu}+1\right)+c\,\left(\E\norm{\xi}_{H^{1}}^{2}+\mu\,\E\norm{\eta}_{H}^{2}\right),
	\end{split} \end{align}
	and 
	\begin{equation}\label{sys_ene2}
		\E\sup_{s\in[0,t]}\norm{u_{\mu}(s)}_{H}^{2}+\int_{0}^{t}\E\norm{u_{\mu}(s)}_{H^{1}}^{2}ds\leq c\,\left(1+t+\E\norm{\xi}_{H}^{2}+\mu\, \E\norm{\xi}_{H^{1}}^{2}+\mu^{2}\,\E\norm{\eta}_{H}^{2}\right).
	\end{equation}
\end{Lemma}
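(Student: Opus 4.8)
The plan is to derive both inequalities from Itô's formula applied to two carefully chosen Lyapunov-type functionals, refining the computations of \cite[Proposition 4.2, Remark 4.3]{cerraixi} so as to track the dependence on $t$ and on the random initial datum. Throughout I work with the classical $\H_1$-valued solution (whose existence is recalled from \cite{cerraixi}), so that all inner products below are well defined; the application of Itô's formula to $\norm{u_\mu}_{H^1}^2$, $\norm{u_\mu}_H^2$, $\norm{v_\mu}_H^2$ and to $\Inner{u_\mu,v_\mu}_H$ is justified exactly as in \cite{cerraixi} (e.g.\ by a spectral Galerkin approximation and passage to the limit). The structural difficulty is that the friction in \eqref{system} dissipates only $v_\mu$ and not $u_\mu$, so displacement dissipation must be manufactured. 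The key algebraic device is the primitive $G(r):=\int_0^r s\gamma(s)\,ds$, for which $G'(r)=r\gamma(r)$ and, by Hypothesis \ref{Hypothesis2}, $\tfrac{\gamma_0}{2}r^2\le G(r)\le \tfrac{\gamma_1}{2}r^2$; since $du_\mu=v_\mu\,dt$ carries no martingale part, the chain rule gives $d\int_{\mathcal O}G(u_\mu)\,dx=\Inner{u_\mu,\gamma(u_\mu)v_\mu}_H\,dt$ with no Itô correction, which exactly cancels the damping cross term produced when the velocity equation is tested against $u_\mu$.

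For \eqref{sys_ene1} I would use the modified energy
\[
\Psi_\mu:=\tfrac12\norm{u_\mu}_{H^1}^2+\tfrac{\mu}{2}\norm{v_\mu}_H^2+\kappa\Big(\int_{\mathcal O}G(u_\mu)\,dx+\mu\Inner{u_\mu,v_\mu}_H\Big),
\]
for a constant $\kappa>0$ to be fixed. Applying Itô's formula, the terms $\pm\Inner{Au_\mu,v_\mu}_H$ cancel, the friction gives $-\Inner{\gamma(u_\mu)v_\mu,v_\mu}_H\le-\gamma_0\norm{v_\mu}_H^2$, the cross term contributes $\kappa\Inner{u_\mu,Au_\mu}_H=-\kappa\norm{u_\mu}_{H^1}^2$ (the sought displacement dissipation) together with a $\kappa\mu\norm{v_\mu}_H^2$ term, while the Itô correction equals $\tfrac1{2\mu}\norm{\sigma(u_\mu)}_{\mathcal L_2(H_Q,H)}^2\le\tfrac{\sigma_\infty^2}{2\mu}$ by \eqref{sgfine2}. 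Bounding the reaction terms by \eqref{sm1}, Young's inequality and the Poincaré inequality $\norm{u}_H^2\le\alpha_1^{-1}\norm{u}_{H^1}^2$, and using $L_f<\alpha_1$ from \eqref{L_F_condition}, I fix first $\kappa$ large (depending only on $L_f,\gamma_0,\gamma_1,\alpha_1$) so that the displacement dissipation dominates the reaction, and then $\mu_0\in(0,1)$ small and $T$-independent so that $\kappa\mu\le\gamma_0/4$ and $\Psi_\mu$ stays equivalent to $\norm{u_\mu}_{H^1}^2+\mu\norm{v_\mu}_H^2$. This yields, for $\mu<\mu_0$,
\[
d\Psi_\mu\le-\tfrac{\gamma_0}{2}\norm{v_\mu}_H^2\,dt-\delta\norm{u_\mu}_{H^1}^2\,dt+\Big(c+\tfrac{\sigma_\infty^2}{2\mu}\Big)dt+dM_t,
\]
with $\delta>0$ and $M_t=\int_0^t\Inner{v_\mu+\kappa u_\mu,\sigma(u_\mu)\,dw^Q}_H$. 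Integrating and taking expectations kills $M_t$ and produces the two time integrals and the $\tfrac{t}{\mu}$ on the right; the supremum in time is recovered by a Burkholder--Davis--Gundy estimate, whose quadratic variation is dominated by $\sigma_\infty^2\int_0^t\norm{v_\mu+\kappa u_\mu}_H^2\,ds$, i.e.\ by the already bounded integral terms. The coercivity of $\Psi_\mu$ then gives \eqref{sys_ene1}.

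For \eqref{sys_ene2} I would drop the full energy and use only $\Phi:=\int_{\mathcal O}G(u_\mu)\,dx+\mu\Inner{u_\mu,v_\mu}_H$, which is comparable to $\tfrac12\norm{u_\mu}_H^2$ up to a $\mu^2\norm{v_\mu}_H^2$ remainder. After the same cancellation, Itô's formula gives $d\Phi=\big[\mu\norm{v_\mu}_H^2-\norm{u_\mu}_{H^1}^2+\Inner{u_\mu,F(u_\mu)}_H\big]dt+\Inner{u_\mu,\sigma(u_\mu)\,dw^Q}_H$; crucially there is no $\tfrac1\mu$ Itô correction since $\Phi$ is affine in $v_\mu$, and the single velocity term carries a factor $\mu$. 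Absorbing $\Inner{u_\mu,F(u_\mu)}_H\le\tfrac{L_f+\varepsilon}{\alpha_1}\norm{u_\mu}_{H^1}^2+c_\varepsilon$ into $-\norm{u_\mu}_{H^1}^2$ (again using $L_f<\alpha_1$), integrating and taking expectations yields
\[
\E\Phi(t)+\delta\int_0^t\E\norm{u_\mu}_{H^1}^2\,ds\le\E\Phi(0)+\mu\int_0^t\E\norm{v_\mu}_H^2\,ds+c\,t.
\]
Now I invoke \eqref{sys_ene1}, already proven, so that $\mu\int_0^t\E\norm{v_\mu}_H^2\,ds\le c(t+\mu+\mu\E\norm{\xi}_{H^1}^2+\mu^2\E\norm{\eta}_H^2)$: the $\mu$ prefactor turns the singular $\tfrac{t}{\mu}$ into a linear-in-$t$ term and generates exactly the $\mu\E\norm{\xi}_{H^1}^2$ and $\mu^2\E\norm{\eta}_H^2$ contributions on the right of \eqref{sys_ene2}. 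Combined with $\E\Phi(0)\le c(\E\norm{\xi}_H^2+\mu^2\E\norm{\eta}_H^2)$, a BDG estimate for $\int_0^\cdot\Inner{u_\mu,\sigma(u_\mu)\,dw^Q}_H$ for the supremum, and the coercivity remainder $\tfrac{\mu^2}{\gamma_0}\E\sup_s\norm{v_\mu}_H^2$ (again linear in $t$ by \eqref{sys_ene1}), this gives \eqref{sys_ene2}.

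The main obstacle is precisely the missing displacement dissipation: everything hinges on manufacturing the $-\norm{u_\mu}_{H^1}^2$ term through the cross term $\mu\Inner{u_\mu,v_\mu}_H$ while keeping the functionals coercive, and on the bookkeeping of powers of $\mu$ so that the singular $\tfrac1\mu$ from the Itô correction appears only in \eqref{sys_ene1} and is damped to a regular, linear-in-$t$ contribution in \eqref{sys_ene2}. The delicate point in the first step is choosing the coupling constant $\kappa$ and then the threshold $\mu_0$ in the correct order and independently of $T$, which is exactly where condition \eqref{L_F_condition} enters decisively.
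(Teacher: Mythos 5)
Your proof is correct and rests on exactly the two ingredients the paper imports from \cite{cerraixi}: the natural energy $\norm{u_\mu}_{H^1}^2+\mu\norm{v_\mu}_H^2$ (giving the velocity dissipation and the $t/\mu$ forcing from the It\^o correction) and the cross functional $\int_{\mathcal O}G(u_\mu)\,dx+\mu\Inner{u_\mu,v_\mu}_H$ built from the primitive of $r\gamma(r)$ (giving the $-\norm{u_\mu}_{H^1}^2$ dissipation after the state-dependent friction terms cancel), with condition \eqref{L_F_condition} entering in the same way to absorb the reaction. The only difference is organizational: you merge the two into a single Lyapunov functional with a coupling constant $\kappa$ and close \eqref{sys_ene1} in one pass before deducing \eqref{sys_ene2}, whereas the paper keeps the two preliminary estimates \eqref{prelim_ene1} and \eqref{prelim_ene2} separate and bootstraps them against each other; both versions of the bookkeeping are valid and produce the same powers of $\mu$.
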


\begin{proof}
	 Let $(\mathfrak{u}_0,\mathfrak{v}_0)\in L^{2}(\Omega;\mathcal{H}_{1})$ and let $(u_{\mu},v_{\mu})\in L^{2}(\Omega;C([0,T];\mathcal{H}_{1}))$ be the unique solution to system \eqref{system}. By proceeding as in the proof of \cite[Lemma 4.2]{cerraixi}, we have for every $\mu\in(0,1)$ 
	\begin{equation}\label{prelim_ene1}
		\begin{array}{l}
			\ds{\E\sup_{s\in[0,t]}\norm{u_{\mu}(s)}_{H^{1}}^{2}+\mu\,\E\sup_{s\in[0,t]}\norm{v_{\mu}(s)}_{H}^{2}+\int_{0}^{t}\E\norm{v_{\mu}(s)}_{H}^{2}ds }\\
			\vs
			\ds{\ \ \ \ \ \ \ \ \ \ \leq c\,\left(\frac{t}{\mu}+\Big(1+\E\norm{\xi}_{H^{1}}^{2}+\mu\E\norm{\eta}_{H}^{2}\Big)+\int_{0}^{t}\E\norm{u_{\mu}(s)}_{H}^{2}ds\right) }.
		\end{array}
	\end{equation}
	Moreover, by proceeding as in the proof of \cite[Lemma 4.1]{cerraixi}, we have $\P$-a.s. 
	\begin{equation*}
		\begin{array}{l}
			\ds{\frac{\gamma_{0}}{4}\norm{u_{\mu}(t)}_{H}^{2} }
			\ds{\leq c\,\left(\norm{\xi}_{H}^{2}+\mu^{2}\norm{\eta}_{H}^{2}\right)+c\,\mu^{2}\norm{v_{\mu}(t)}_{H}^{2}+\mu\int_{0}^{t}\norm{v_{\mu}(s)}_{H}^{2}ds }\\
			\vs
			\ds{\quad \quad-\int_{0}^{t}\norm{u_{\mu}(s)}_{H^{1}}^{2}ds+\int_{0}^{t}\Inner{F(u_{\mu}(s)),u_{\mu}(s)}_{H}ds+\int_{0}^{t}\Inner{u_{\mu}(s),\sigma(u_{\mu}(s))dw^{Q}(s)}_{H} }.
		\end{array}
	\end{equation*}
	Due to \eqref{sm1}, for every $\delta>0$ we have
	\begin{equation*}
		\E\sup_{s\in[0,t]}\left\lvert \int_{0}^{s}\Inner{F(u_{\mu}(r)),u_{\mu}(r)}_{H}dr\right\rvert\leq \frac{1}{\alpha_{1}}(L_f+\delta)\int_{0}^{t}\E\norm{u_{\mu}(s)}_{H^{1}}^{2}ds+c_\delta t.
	\end{equation*}
	Moreover, due to \eqref{sgfine2} we have
	\begin{equation*}
		\begin{array}{l}   \ds{\E\sup_{s\in[0,t]}\left\lvert\int_{0}^{s}\Inner{u_{\mu}(r),\sigma(u_{\mu}(r))dw^{Q}(r)}_{H}\right\rvert}\\[14pt]
	\ds{\quad \quad \quad \leq c\,\left(\mathbb{E}\int_0^t\Vert u_\mu(s)\Vert^2_{H}\,ds\right)^{\frac 12}\leq \frac{\delta}{\alpha_{1}}\int_{0}^{t}\E\norm{u_{\mu}(t)}_{H^{1}}^{2}dt+c_\delta.}		\end{array}
\end{equation*}
	According  to \eqref{L_F_condition},  we can fix $\delta>0$ such that
	\[\frac 1{\alpha_1}\left(L_f+2\,\delta\right)<1,\]
	and this yields
	\begin{equation}\label{prelim_ene2}
		\begin{array}{ll}
		\ds{\E\sup_{s\in[0,t]}\norm{u_{\mu}(s)}_{H}^{2}+\int_{0}^{t}\E\norm{u_{\mu}(s)}_{H^{1}}^{2}ds}\\
		\vs
		\ds{\quad \quad \leq c\,\Big(1+t+\E\norm{\xi}_{H}^{2}+\mu^{2}\E\norm{\eta}_{H}^{2}\Big)+c\,\mu^{2}\,\E\sup_{s\in[0,t]}\norm{v_{\mu}(s)}_{H}^{2}+c\,\mu\int_{0}^{t}\E\norm{v_{\mu}(s)}_{H}^{2}ds}.
		\end{array}
	\end{equation}
	Thus \eqref{sys_ene2} holds by combining \eqref{prelim_ene2} with \eqref{prelim_ene1}. Finally, by combining \eqref{sys_ene2} with \eqref{prelim_ene1}, we complete the proof of \eqref{sys_ene1}.
		 
\end{proof}

\begin{Lemma}
	Let $\{(\xi_{\mu},\eta_{\mu})\}_{\mu \in\,(0,1)}\subset L^{2}(\Omega;\H_{1})$ be a family of random variables such that
	\begin{equation}\label{initial_condition}
		\sup_{\mu\in(0,1)}\E\Big(\norm{\xi_{\mu}}_{H^{1}}^{2}+\mu\norm{\eta_{\mu}}_{H}^{2}\Big)<\infty.
	\end{equation}
	If $(u_{\mu},v_{\mu})\in L^{2}(\Omega;C([0,T];\mathcal{H}_{1}))$ is the solution to system \eqref{system} with initial condition $(\xi_{\mu},\eta_{\mu})$, then there exist $\mu_{T}\in(0,\mu_{0})$ and $c_{T}>0$ such that for every $\mu\in(0,\mu_{T})$ 
	\begin{equation}\label{sys_ene3}
		 \E\sup_{t\in[0,T]}\Big(\norm{u_{\mu}(t)}_{H^{1}}^{2}+\mu\norm{v_{\mu}(t)}_{H}^{2}\Big)\leq \frac{c_{T}}{\sqrt{\mu}}+\Big(\E\norm{\xi_{\mu}}_{H^{1}}^{2}+\mu\,\E\norm{\eta_{\mu}}_{H}^{2}\Big).
	\end{equation} 
\end{Lemma}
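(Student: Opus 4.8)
The plan is to run a sharp energy estimate directly on the natural functional, so as to improve the Gronwall-based bound \eqref{sys_ene1} in two respects: keeping the coefficient in front of the initial datum equal to $1$, and replacing the deterministic growth $t/\mu$ by a stochastic $1/\sqrt\mu$ coming from the martingale. I introduce the potential $\Lambda(h):=\int_{\mathcal O}\mathfrak f(x,h(x))\,dx$, which by \eqref{sm2} satisfies $D\Lambda=F$, and set $\mathcal E_\mu(t):=\norm{u_{\mu}(t)}_{H^{1}}^{2}+\mu\norm{v_{\mu}(t)}_{H}^{2}-2\Lambda(u_\mu(t))$. Applying It\^o's formula to \eqref{system} and using that $\tfrac{d}{dt}\Lambda(u_\mu)=\langle F(u_\mu),v_\mu\rangle_H$ cancels the reaction term, while the coupling $\pm 2\langle Au_\mu,v_\mu\rangle_H$ cancels between the two components of the energy, I obtain
\[
\mathcal E_\mu(t)=\mathcal E_\mu(0)+\int_0^t\Big[\tfrac1\mu\norm{\sigma(u_\mu)}_{\mathcal L_2(H_Q,H)}^2-2\langle\gamma(u_\mu)v_\mu,v_\mu\rangle_H\Big]\,ds+M_\mu(t),
\]
with $M_\mu(t)=2\int_0^t\langle v_\mu,\sigma(u_\mu)\,dw^{Q}\rangle_H$. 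Since no Gronwall lemma is applied to $\mathcal E_\mu(0)$ and $|\Lambda(h)|\le c(1+\norm{h}_H^2)$, the initial datum enters with coefficient exactly $1$ modulo a term $c(1+\E\norm{\xi_\mu}_H^2)$, which \eqref{sys_ene2} bounds by $c_T$ and which I absorb into $c_T/\sqrt\mu$; converting back from $\mathcal E_\mu$ to the target quantity costs only $2\sup_{[0,T]}|\Lambda(u_\mu)|\le c(1+\sup_{[0,T]}\norm{u_\mu}_H^2)$, again controlled by \eqref{sys_ene2}.

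For the martingale, Hypothesis \ref{Hypothesis1} gives $\langle M_\mu\rangle_t\le 4\sigma_\infty^2\int_0^t\norm{v_\mu}_H^2\,ds$, so the Burkholder--Davis--Gundy inequality together with \eqref{sys_ene1} (which, after using \eqref{initial_condition} on the data, yields $\int_0^T\E\norm{v_\mu}_H^2\,ds\le c_T/\mu$) gives $\E\sup_{[0,T]}|M_\mu|\le c\,(\E\langle M_\mu\rangle_T)^{1/2}\le c_T/\sqrt\mu$. This is precisely the mechanism that produces the $1/\sqrt\mu$ rate. Any residual $\langle v_\mu,F(u_\mu)\rangle$-type term, after a Cauchy--Schwarz split, reduces to $\int_0^T\E\norm{F(u_\mu)}_H^2\,ds\le c_T$ via \eqref{sm1} and \eqref{sys_ene2}, since that bound carries no negative power of $\mu$.

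The main obstacle is the drift $\int_0^t[\tfrac1\mu\norm{\sigma(u_\mu)}^2-2\langle\gamma(u_\mu)v_\mu,v_\mu\rangle]\,ds$. Simply discarding the (negative) dissipation leaves the deterministic production $\sigma_\infty^2 t/\mu$, which only reproduces the $t/\mu$ of \eqref{sys_ene1}; to reach $1/\sqrt\mu$ the production must be cancelled against the time-integrated dissipation, and here the energy identity is, by itself, self-referential, since the supremum of this drift is tied back to $\sup_{[0,T]}\mathcal E_\mu$. The key point I would exploit is that the fast velocity relaxes on the $\mu$-scale, so that $2\int_0^t\langle\gamma(u_\mu)v_\mu,v_\mu\rangle\,ds$ tracks $\tfrac1\mu\int_0^t\norm{\sigma(u_\mu)}^2\,ds$ up to fluctuations that are themselves of martingale size (whence $1/\sqrt\mu$), rather than up to the crude $O(t/\mu)$ error. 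Concretely, I would establish a sharp two-sided control of $\int_0^t\langle\gamma(u_\mu)v_\mu,v_\mu\rangle\,ds$ — refining the balance behind \eqref{sys_ene1} so as to \emph{retain} the dissipation instead of bounding it by zero — and combine it with the BDG estimate above and the $H^1$-in-time control of \eqref{sys_ene2}; the threshold $\mu_T$ then emerges from requiring $\mu$ small enough that the ensuing Young-type inequalities close. This cancellation is the delicate heart of the argument, and it is exactly where the improvement over \eqref{sys_ene1} is gained.
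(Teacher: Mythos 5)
Your setup is fine up to and including the energy identity and the BDG bound $\E\sup_{t\in[0,T]}|M_\mu(t)|\le c\,(\E\langle M_\mu\rangle_T)^{1/2}\le c_T/\sqrt{\mu}$, and your treatment of $\Lambda$ and of the initial datum is correct. The genuine gap is exactly where you locate "the delicate heart of the argument": the cancellation of the production term $\tfrac{1}{\mu}\int_0^t\norm{\sigma(u_\mu(s))}^2_{\mathcal{L}_2(H_Q,H)}\,ds$ against the dissipation $2\int_0^t\langle\gamma(u_\mu)v_\mu,v_\mu\rangle_H\,ds$ up to an $O(1/\sqrt\mu)$ error, \emph{uniformly in $t$}, is asserted but never established, and it cannot be extracted from the energy identity itself. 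Indeed, writing It\^o's formula for $\mu\norm{v_\mu}_H^2$ alone to "solve" for the dissipation integral just reinserts $\norm{u_\mu(t)}_{H^1}^2$ through the term $-2\int_0^t\langle Au_\mu,v_\mu\rangle_H\,ds$ and returns you to the same identity — this is the self-referentiality you acknowledge. The statement that the drift difference is "of martingale size, whence $1/\sqrt\mu$" is essentially a restatement of \eqref{sys_ene3}, not a step toward it; the available a priori information, \eqref{sys_ene1}, only gives $\int_0^T\E\norm{v_\mu}_H^2\,ds\le c_T/\mu$, i.e. that the two terms match at leading order $t/\mu$ in expectation, which says nothing about the size or the sign of their difference pathwise and uniformly in $t$. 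As written, discarding the dissipation gives back $t/\mu$ and nothing better, so the proposal does not close.

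The paper resolves this by an entirely different mechanism, which avoids any cancellation in the drift: it argues by contradiction on $L_\mu(t):=\norm{u_\mu(t)}_{H^1}^2+\mu\norm{v_\mu(t)}_H^2-(\norm{\xi_\mu}_{H^1}^2+\mu\norm{\eta_\mu}_H^2)$. The one-sided It\^o bound shows $L_\mu$ can increase at rate at most $\lambda/\mu$ plus terms ($U_k$, $M_k$) whose expectations are $O(1)$ and $O(1/\sqrt{\mu})$ respectively. Hence if $\sup_t L_\mu = \theta$ is large, $L_\mu$ must stay above $\theta/2$ on a time interval of length at least of order $\mu\theta$, so the occupation integral $\int_0^T L_\mu^+(s)\,ds$ is at least of order $\mu\theta^2$. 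Since \eqref{sys_ene1}, \eqref{sys_ene2} and \eqref{initial_condition} bound $\E\int_0^T|L_\mu(s)|\,ds$ uniformly in $\mu$, the quadratic-versus-linear comparison forces $\sqrt{\mu}\,\E\,\theta\le c_T$, which is \eqref{sys_ene3}. If you want to salvage your direct route, you would need to supply an independent proof of the two-sided control of $\int_0^t\langle\gamma(u_\mu)v_\mu,v_\mu\rangle_H\,ds$ that you invoke; absent that, the excursion/occupation argument above (or an equivalent substitute) is the missing idea.
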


\begin{proof}
	If for every $\mu\in(0,\mu_{0})$ and $t\in[0,T]$, we define 
	\begin{equation*}
		L_{\mu}(t):=\norm{u_{\mu}(t)}_{H^{1}}^{2}+\mu\norm{v_{\mu}(t)}_{H}^{2}-\Big(\norm{\xi_{\mu}}_{H^{1}}^{2}+\mu\norm{\eta_{\mu}}_{H}^{2}\Big),
	\end{equation*} 
	then \eqref{sys_ene3} is equivalent to
	\begin{equation}\label{claim}
		\sqrt{\mu}\ \mathbb{E}\sup_{t\in[0,T]}L_{\mu}(t)\leq c_{T},\ \ \ \ \ \ \ \ \mu\in(0,\mu_{T}),
	\end{equation}
	for some constants $\mu_{T}\in(0,1)$ and $c_{T}>0$.
	
Now, if we assume (\ref{claim}) is not true, there exists a sequence $(\mu_{k})_{k\in\mathbb{N}}\subset(0,\mu_{0})$ converging to $0$, as $k\to\infty$, such that
	\begin{equation}\label{contra}
		\lim_{k\to\infty}\sqrt{\mu_{k}}\ \mathbb{E}\sup_{t\in[0,T]}L_{\mu_{k}}(t)=+\infty.
	\end{equation}
	For every $k\in\mathbb{N}$, the mapping $t\mapsto L_{\mu_k}(t)$ is  continuous $\mathbb{P}$-a.s., so that  there exists a random time $t_{k}\in[0,T]$ such that
	\begin{equation*}
		L_{\mu_{k}}(t_{k})=\sup_{t\in[0,T]}L_{\mu_{k}}(t).
	\end{equation*}
	As a consequence of  It\^o's formula,  we have
	\[
    \begin{aligned}
    &\frac 12 d \left(\Vert  u_\mu(t)\Vert _{H^1}^2+\mu\, \Vert  _\mu(t)\Vert _H^2 \right)\\[10pt]
    &=\le(\langle F(u_\mu(t),v_\mu(t)\rangle_H-\langle \gamma(u_\mu(t))v_\mu(t),v_\mu(t)\rangle_H +\frac{1}{2\mu}\Vert \si(u_\mu(t))\Vert^2_{\mathcal{L}_2(H_Q,H)}\r)dt\\[10pt]
    &\hsp\hslp +\langle v_\mu(t),\si(u_\mu(t))dw^Q(t)\rangle_H\\[10pt]
    &\leq \le(c\,\le(\Vert  u_\mu(t)\Vert _{H}^2 +1\r)-\frac {\gamma_0}2 \Vert  v_\mu(t)\Vert _H^2+\frac{\si_\infty^2}{2\mu}\r)dt+\langle v_\mu(t),\si(u_\mu(t))dw^Q(t)\rangle_H.
\end{aligned}
\]
Hence, 	if $s$ is any random time  such that $\mathbb{P}(s\leq t_{k})=1$,  we have 
	\begin{equation*}
		L_{\mu_{k}}(t_{k})-L_{\mu_{k}}(s)\leq \frac{\sigma_{\infty}^{2}}{\mu_k}(t_{k}-s)+c\int_{s}^{t_{k}}\Big(1+\norm{u_{\mu_{k}}(r)}_{H}^{2}\Big)dr+2\big(M_{k}(t_{k})-M_{k}(s)\big),
	\end{equation*}
	where 
	\begin{equation*}
		M_{k}(t):=\int_{0}^{t}\Inner{v_{\mu_{k}}(r),\sigma(u_{\mu_{k}}(r))dw^{Q}(r)}_{H}.
	\end{equation*}

	If we define 
	\begin{equation*}
	 U_{k}:=c\int_0^T\norm{u_{\mu_{k}}(t)}_{H}^{2}\,dt,\ \ \ \ \ \ \ M_{k}:=\sup_{t\in[0,T]}\abs{M_{k}(t)},
	\end{equation*}
	this implies that there exists some constant $\lambda>0$, independent of $k$, such that
	\begin{equation*}
		L_{\mu_{k}}(t_{k})-L_{\mu_{k}}(s)\leq \frac{\lambda}{\mu_k}(t_{k}-s)+U_{k}+4M_{k},
	\end{equation*}
	and  since $L_{\mu_{k}}(s)=0$, if we take $s=0$ we get
	\begin{equation*}
		t_{k}\geq \frac{\mu_{k}}{\lambda}\Big(L_{\mu_{k}}(t_{k})-U_{k}-4M_{k}\Big)=:\frac{\mu_{k}\theta_{k}}{\lambda}.
	\end{equation*}
Now, on the set $E_{k}:=\big\{\theta_{k}>0\big\}$, we fix an arbitrary  $s\in\big[t_{k}-\mu_{k}\theta_{k}/(2\lambda),t_{k}\big]$ and we have 
	\begin{equation}\label{positive}
		L_{\mu_{k}}(s)\geq L_{\mu_{k}}(t_{k})-\frac{1}{2}\theta_{k}-U_{k}-4M_{k}=\frac{1}{2}\theta_{k}>0.
	\end{equation}
	Hence, if we define 
	\begin{equation*}
		I_{k}:=\int_{0}^TL^+_{\mu_{k}}(s)\,ds,
	\end{equation*}
	due to \eqref{positive} we have
	\begin{equation*}
		I_{k}\geq \int_{t_{k}-\frac{\mu_{k}\theta_{k}}{2\lambda}}^{t_{k}}L_{\mu_{k}}(s)ds\geq \frac{\mu_{k}}{4\lambda}\,\theta_k^{2},\ \ \ \ \ \ \ \text{on}\ E_k,
	\end{equation*}	
	so that
	\begin{equation}\label{contra1}
		 \mathbb{E}(I_{k};E_{k})\geq \mathbb{E}\Bigg(\frac{\mu_{k}}{4\lambda}\theta_k^{2};E_{k}\Bigg).
	\end{equation}
	
 Now, according to \eqref{sys_ene1}, \eqref{sys_ene2} and \eqref{initial_condition}
 	\begin{equation*}
 		\E\,U_{k}\leq c\Big(1+T+\E\,\norm{\xi_{\mu_{k}}}_{H}^{2}+\mu_{k}\,\E\,\norm{\xi_{\mu_{k}}}_{H^{1}}^{2}+\mu_{k}^{2}\,\E\,\norm{\eta_{\mu_{k}}}_{H}^{2}\Big)\leq c_T,
 	\end{equation*}
 	and 
	\begin{equation*}
		\E\,M_{k}\leq c \Big(\int_{0}^{T}\E\norm{v_{\mu_{k}}(t)}_{H}^{2}dt\Big)^{\frac{1}{2}}\leq c\left(1+\frac{T}{\mu_{k}}+\E\norm{\xi_{\mu_{k}}}_{H^{1}}^{2}+\mu_{k}\,\E\norm{\eta_{\mu_{k}}}_{H}^{2}\right)^{\frac{1}{2}}\leq c_T\Big(1+\frac{1}{\mu_{k}}\Big)^{\frac{1}{2}},
	\end{equation*}
	so that
	\begin{equation*}
		\limsup_{k\to\infty}\sqrt{\mu_{k}}\left(\mathbb{E}\,U_k+4\,  \mathbb{E}\,M_{k}\right)<+\infty.
	\end{equation*}
	Thanks to (\ref{contra}) this gives	
	\begin{equation*}
			\lim_{k\to\infty}\sqrt{\mu_{k}}\ \mathbb{E}(\theta_{k})=+\infty,
	\end{equation*}
	and hence
	\begin{equation}
		\label{contra2}
		\lim_{k\to\infty}\sqrt{\mu_{k}}\ \mathbb{E}(\theta_{k};E_{k})=+\infty.
	\end{equation}
	
	Now, according to \eqref{contra1}, we have
	\begin{equation*}
		\mathbb{E}(I_{k};E_{k})\geq \frac{\mu_k}{4\lambda}\mathbb{E}\big(\theta_{k}^{2};E_{k}\big)\geq \frac{\mu_k}{4\lambda}\left(\E( \theta_{k};E_{k})\right)^{2},
	\end{equation*}
and due to \eqref{contra2}, this implies
	\[\lim\limits_{k\to\infty}\mathbb{E}(I_{k};E_{k})=+\infty.\]	
	However, as a consequence of \eqref{sys_ene1}, \eqref{sys_ene2} and \eqref{initial_condition}, we have
	\begin{equation*}
		\sup_{k\in\mathbb{N}}\,\E\,I_{k}\leq\sup_{k\in\mathbb{N}}\,\int_{0}^{T}\E\,\abs{L_{\mu_{k}}(s)}ds\leq c_T\,\sup_{k\in\mathbb{N}}\Big(1+T+\E\norm{\xi_{\mu_{k}}}_{H^{1}}^{2}+\mu_{k}\E\norm{\eta_{\mu_{k}}}_{H}^{2}\Big)<+\infty,
	\end{equation*}
	 and this gives a contradiction, since $\E(I_{k};E_{k})\leq \E(I_{k})$ for every $k\in\mathbb{N}$.
	  In particular, this means that claim \eqref{claim} is true, and \eqref{sys_ene3} holds.
	  
\end{proof}

\begin{Lemma}\label{sys_invariant_H1}
	For every $\mu>0$, if $\nu_{\mu}^{\mathcal{H}}\in \mathcal{P}(\H)$ is any invariant measure for $ P_t^{\mu, \mathcal{H}}$ supported in $\H_{1}$, then ${\nu}_{\mu}^{\mathcal{H}_1}\in \mathcal{P}(\H_{1})$ is invariant for ${P}_{t}^{\mu, \mathcal{H}_1}$. Moreover,
		\begin{equation}\label{uniform_invariant1}			\sup_{\mu \in\,(0,1)}\,\int_{\mathcal{H}_{1}}\Big(\norm{\mathfrak{u}}_{H^{1}}^{2}+\mu\norm{\mathfrak{v}}_{H}^{2}\Big){\nu}^{\mathcal{H}_1}_{\mu}(d\mathfrak{u},d\mathfrak{v})<\infty.
	\end{equation}
\end{Lemma}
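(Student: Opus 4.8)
My plan is to treat the two assertions separately, the invariance of the restriction by a measure-theoretic reduction to the kernel on $\mathcal{H}$, and the uniform bound by combining the Cesàro construction of $\nu_\mu^{\mathcal{H}}$ with the a priori estimates of Lemma \ref{enegy}. For the invariance of $\nu_\mu^{\mathcal{H}_1}$ I would argue directly with transition kernels. Fix $\mu>0$ and $\Gamma\in\mathcal{B}(\mathcal{H}_1)$; since $\mathcal{B}(\mathcal{H}_1)\subset\mathcal{B}(\mathcal{H})$, the set $\Gamma$ is Borel in $\mathcal{H}$ as well. For $\mathfrak{z}\in\mathcal{H}_1$ the generalized solution is classical and remains in $\mathcal{H}_1$, so $\{z_\mu^{\mathfrak{z}}(t)\in\Gamma\}$ is the same event in either space and the kernels agree, $P_t^{\mu,\mathcal{H}_1}(\mathfrak{z},\Gamma)=P_t^{\mu,\mathcal{H}}(\mathfrak{z},\Gamma)$, consistently with the identity $P_t^{\mu,\mathcal{H}_1}\varphi=P_t^{\mu,\mathcal{H}}\varphi$ on $\mathcal{H}_1$ recorded after Proposition \ref{wellposedness_old}. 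Using that $\nu_\mu^{\mathcal{H}}$ is carried by $\mathcal{H}_1$ and that $\nu_\mu^{\mathcal{H}_1}$ is its restriction,
\[
\int_{\mathcal{H}_1}P_t^{\mu,\mathcal{H}_1}(\mathfrak{z},\Gamma)\,\nu_\mu^{\mathcal{H}_1}(d\mathfrak{z})=\int_{\mathcal{H}}P_t^{\mu,\mathcal{H}}(\mathfrak{z},\Gamma)\,\nu_\mu^{\mathcal{H}}(d\mathfrak{z})=\nu_\mu^{\mathcal{H}}(\Gamma)=\nu_\mu^{\mathcal{H}_1}(\Gamma),
\]
the central equality being the $\mathcal{H}$-invariance of $\nu_\mu^{\mathcal{H}}$. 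As $\Gamma$ is arbitrary and $P_t^{\mu,\mathcal{H}_1}(\mathfrak{z},\cdot)$ is itself carried by $\mathcal{H}_1$, this is exactly the invariance of $\nu_\mu^{\mathcal{H}_1}$ for $P_t^{\mu,\mathcal{H}_1}$.

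For \eqref{uniform_invariant1} I would exploit that in Proposition \ref{sys_invariant} the measure $\nu_\mu^{\mathcal{H}}$ is obtained as the weak limit of the time averages $\Gamma_{t_n}^\mu(0,\cdot)$. The functional $(u,v)\mapsto\norm{u}_{H^1}^2+\mu\norm{v}_H^2$, regarded as a $[0,+\infty]$-valued map on $\mathcal{H}$, is a supremum of continuous functions, hence lower semicontinuous, so the portmanteau theorem gives, with $(u_\mu^0,v_\mu^0)$ the solution started at the origin,
\[
\int_{\mathcal{H}_1}\!\Big(\norm{u}_{H^1}^2+\mu\norm{v}_H^2\Big)\nu_\mu^{\mathcal{H}_1}(du,dv)\le\liminf_{n\to\infty}\frac{1}{t_n}\int_0^{t_n}\mathbb{E}\Big(\norm{u_\mu^0(s)}_{H^1}^2+\mu\norm{v_\mu^0(s)}_H^2\Big)ds.
\]
Now I would feed in Lemma \ref{enegy} with zero initial datum: by \eqref{sys_ene2} one has $\int_0^t\mathbb{E}\norm{u_\mu^0(s)}_{H^1}^2\,ds\le c\,(1+t)$ with $c$ independent of $\mu$ and $t$, so the time average of the $H^1$-energy stays bounded; by \eqref{sys_ene1} one has $\int_0^t\mathbb{E}\norm{v_\mu^0(s)}_H^2\,ds\le c\,(t/\mu+1)$, so the time average of $\mu\norm{v_\mu^0}_H^2$ is bounded by $c\,(1+\mu/t)$, again uniformly. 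The crucial point is that the weight $\mu$ on $\norm{v}_H^2$ exactly cancels the $1/\mu$ growth of the velocity energy created by the singular diffusion $\Sigma_\mu$, so that both contributions stay bounded as $\mu\downarrow0$; letting $n\to\infty$ yields \eqref{uniform_invariant1}. For $\mu$ bounded away from $0$ the same estimates apply with the analogous constants, so the supremum over all $\mu\in(0,1)$ is finite.

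The structural reason behind the bound, which also covers a general invariant measure supported in $\mathcal{H}_1$, is revealed by running the solution from the stationary law and applying It\^o's formula to $\norm{u_\mu}_{H^1}^2+\mu\norm{v_\mu}_H^2$ and to the cross functional $\mu\langle u_\mu,v_\mu\rangle_H$: stationarity forces the expected drifts to vanish and produces two balance relations. The only term not immediately controlled is the friction contribution $\mathbb{E}\,\langle u_\mu,\gamma(u_\mu)v_\mu\rangle_H$, which for a non-constant $\gamma$ is not a multiple of $\mathbb{E}\,\langle u_\mu,v_\mu\rangle_H$; however, setting $\Psi(r):=\int_0^r s\,\gamma(s)\,ds$ one has $\langle u_\mu,\gamma(u_\mu)v_\mu\rangle_H=\frac{d}{dt}\int_{\mathcal{O}}\Psi(u_\mu)\,dx$, so this term too vanishes in expectation by stationarity. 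Once this cancellation is in place, the condition $L_f<\alpha_1\gamma_0/\gamma_1$ of Hypothesis \ref{Hypothesis3} supplies the spectral gap that absorbs the $F$-terms, the bound $\norm{\sigma}\le\sigma_\infty$ controls the diffusion, and the $1/\mu$ again enters weighted by $\mu$.

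I expect the main difficulty to be the rigorous justification rather than the algebra. In the portmanteau step one must verify the asserted lower semicontinuity and that the constants in Lemma \ref{enegy} are genuinely $T$-independent, as stated there. In the stationarity argument the real obstacle is integrability: an arbitrary invariant measure is only known to be carried by $\mathcal{H}_1$, so a priori the energy functionals and the It\^o correction need not be integrable and the stochastic integral need not be a true martingale. I would circumvent this by a truncation, for instance with the stopping times $\tau_N:=\inf\{t:\norm{z_\mu(t)}_{\mathcal{H}_1}\ge N\}$ or a Galerkin projection, deriving the balances at the approximate level, taking expectations there, and passing to the limit by monotone convergence, finally checking that the absorption of the lower-order terms is uniform as $\mu\downarrow0$.
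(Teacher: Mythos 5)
Your proposal is correct in substance but takes a somewhat different route from the paper, and one of your two arguments for the bound does not by itself prove the lemma as stated. For the invariance of $\nu_\mu^{\mathcal{H}_1}$ you argue directly on transition kernels and Borel sets, using $\mathcal{B}(\mathcal{H}_1)\subset\mathcal{B}(\mathcal{H})$ and the agreement of the two kernels on $\mathcal{H}_1$; the paper instead works with $C_b$ test functions and approximates $\varphi\in C_b(\mathcal{H}_1)$ by $\varphi\circ\Pi_n$ with finite-dimensional projections, precisely because a function in $C_b(\mathcal{H}_1)$ need not be continuous for the $\mathcal{H}$-topology. Your version is cleaner, but to pass from the $C_b(\mathcal{H})$-invariance identity to the identity for indicators $1_\Gamma$ you should say a word about the functional monotone class argument (measurability of the kernel plus closure under bounded pointwise limits); with that, it is fine. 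For the moment bound, be aware that the lemma is stated for \emph{any} invariant measure supported in $\mathcal{H}_1$, and it is used later for an arbitrary such family; your Ces\`aro-average/portmanteau argument only controls the particular measure produced by Krylov--Bogoliubov in Proposition \ref{sys_invariant}, so it cannot be the proof. Your second, ``structural'' argument is the one that must carry the day, and it is essentially the paper's: the paper writes the stationarity balances as $\int\mathcal{N}_\mu\varphi\,d\nu_\mu=0$ for the Kolmogorov operator, applied to the corrected energy $\tfrac12(\|\mathfrak{u}\|_{H^1}^2+\mu\|\mathfrak{v}\|_H^2)-\Lambda(\mathfrak{u})$ (which yields $\mu\int\|\mathfrak{v}\|_H^2\,d\nu_\mu\le\sigma_\infty^2/2\gamma_0$) and to $\tfrac12(\mu\|\mathfrak{u}\|_{H^1}^2+\|g(\mathfrak{u})+\mu\mathfrak{v}\|_H^2)$, whose cross term $\mu\langle g(\mathfrak{u}),\mathfrak{v}\rangle_H$ is the paper's device for neutralizing the state-dependent friction. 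Your alternative — the cross functional $\mu\langle u,v\rangle_H$ combined with the observation that $\langle u,\gamma(u)v\rangle_H=\frac{d}{dt}\int_{\mathcal{O}}\Psi(u)\,dx$ with $\Psi(r)=\int_0^r s\gamma(s)\,ds$, so that this term has zero mean in the stationary regime — is a valid and arguably tidier substitute, and the remaining bookkeeping with \eqref{L_F_condition} closes as you indicate. The truncation/integrability issues you flag are real (the paper also passes over them), so your plan to localize with stopping times or Galerkin projections and use monotone convergence is the right way to make the balances rigorous.
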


\begin{proof} First, we show the invariance of $\nu_{\mu}^{\mathcal{H}_1}$ for ${P}_{t}^{\mu, \mathcal{H}_1}$. Due to the invariance of  $\nu_{\mu}^{ \mathcal{H}}$ in $\mathcal{H}$, for every $\varphi\in C_{b}(\H)$ we have 
	\begin{equation*}
		\int_{\H} P_t^{\mu, \mathcal{H}}\varphi(\mathfrak{z})\nu_{\mu}^{ \mathcal{H}}(d\mathfrak{z})=\int_{\H}\varphi(\mathfrak{z})\nu_{\mu}^{ \mathcal{H}}(d\mathfrak{z}).
	\end{equation*}
	Thus, since $\text{supp}\,(\nu_{\mu}^{ \mathcal{H}})\subset \H_{1}$ and $\B(\H_{1})\subset \B(\H)$, for every $\varphi\in C_{b}(\H)$ we get
	\begin{equation*}
		\int_{\H_{1}} P_t^{\mu, \mathcal{H}}\varphi(\mathfrak{z})\nu_{\mu}^{\mathcal{H}_1}(d\mathfrak{z})=\int_{\H_{1}}\varphi(\mathfrak{z})\nu_{\mu}^{\mathcal{H}_1}(d\mathfrak{z}).
	\end{equation*}
	If  $(\hat{e}_i)_{i \in\,\mathbb{N}}\subset \mathcal{H}_1$ is an orthonormal basis of $\mathcal{H}$, for every $n\in\mathbb{N}$  we denote by $\Pi_n$ the projection of $\H$ onto $\mathcal{H}(n):=\text{span}(\hat{e}_1,\ldots,\hat{e}_n)$. We have that $\Pi_n:\H\to \H_{1}$ is continuous and 
	\[\Vert \pi_n h\Vert_{\mathcal{H}_1}\leq c_n\,\Vert h\Vert_{\mathcal{H}},\ \ \ \ h \in\,\mathcal{H},\ \ \ \ \ \ 		\lim_{n\to\infty}\norm{\Pi_nh-h}_{\H_{1}}=0,\ \ \ \ \ \   h\in \H_{1}.
	\]
	Hence, if for any $\varphi\in C_{b}(\H_{1})$ and $n\in\mathbb{N}$,  we define $\varphi_{n}:=\varphi\circ \Pi_n$,   we have $\varphi_{n}\in C_{b}(\H)$ and 
	\begin{equation*}
		\lim_{n\to\infty}\abs{\varphi_{n}(h)-\varphi(h)}=0,\ \ \   h\in \H_{1}.
	\end{equation*}
	
	For every $n\in\mathbb{N}$, we have $\sup_{n\in\mathbb{N}}\norm{\varphi_{n}}_\infty\leq \norm{\varphi}_\infty$, and  the dominated convergence theorem implies that 
	for any given $\mu>0$ and $\varphi\in C_{b}(\H_{1})$	\begin{equation*}
		\lim_{n\to\infty} P_t^{\mu, \mathcal{H}}\varphi_{n}(\mathfrak{z})=\lim_{n\to\infty}\E\,\varphi_{n}\big(z_{\mu}^{\mathfrak{z}}(t)\big)=\E\,\varphi\big(z_{\mu}^{\mathfrak{z}}(t)\big)=P^{\mu, \mathcal{H}_1}_{t}\varphi(\mathfrak{z}),\ \ \   \mathfrak{z}\in \H_{1},\ \ \ \ \ \ t\geq 0.
	\end{equation*}
In particular, by taking the limit as $n$ goes to infinity in both sides of 
	\begin{equation*}
		\int_{\H_{1}} P_t^{\mu, \mathcal{H}}\varphi_{n}(\mathfrak{z})\nu_{\mu}^{\mathcal{H}_1}(d\mathfrak{z})=\int_{\H_{1}}\varphi_{n}(\mathfrak{z})\nu_{\mu}^{\mathcal{H}_1}(d\mathfrak{z}),\ \ \   \varphi\in C_{b}(\H_{1}),
	\end{equation*}
we conclude that
	\begin{equation*}
		\int_{\H_{1}}{P}^{\mu, \mathcal{H}_1}_{t}\varphi(\mathfrak{z})\nu_{\mu}^{\mathcal{H}_1}(d\mathfrak{z})=\int_{\H_{1}}\varphi(\mathfrak{z})\nu_{\mu}^{\mathcal{H}_1}(d\mathfrak{z}),\ \ \ \ \ \    \varphi\in C_{b}(\H_{1}),
	\end{equation*}
and this implies the invariance of  $\nu_{\mu}^{\mathcal{H}_1}$.
	
	Next, 
	in order to prove \eqref{uniform_invariant1}, we consider the Komolgov operator associated to ${P}_{t}^{\mu, \mathcal{H}_1}$ in $\H_{1}$
	\begin{equation*}
		\begin{array}{ll}
			\ds{\mathcal{N}_\mu\varphi(\mathfrak{u},\mathfrak{v})}
			&\ds{=\frac{1}{2\mu^{2}}\text{Tr}_{H}\Big[\big(\sigma(\mathfrak{u})Q\big)(\sigma(\mathfrak{u})Q)^{\ast}D^{2}_{\mathfrak{v}}\varphi(\mathfrak{u},\mathfrak{v})\Big]+\Inner{\mathfrak{v},D_{\mathfrak{u}}\varphi(\mathfrak{u},\mathfrak{v}) }_{H^{1}} }\\
			\vs
			&\ds{\quad +\frac{1}{\mu}\Inner{A\mathfrak{u}-\gamma(\mathfrak{u})v+F(\mathfrak{u}), D_{\mathfrak{v}}\varphi(\mathfrak{u},\mathfrak{v}) }_{H}. }
		\end{array}
	\end{equation*}	
If, with the notations of Section \ref{assumption}, we define
\begin{equation*}
		\varphi_{\mu}(\mathfrak{u},\mathfrak{v}):=\frac{1}{2}\Big(\norm{\mathfrak{u}}_{H^{1}}^{2}+\mu\norm{\mathfrak{v}}_{H}^{2}\Big)-\int_{\mathcal{O}}\mathfrak{f}(x,\mathfrak{u}(x))dx=\frac{1}{2}\Big(\norm{\mathfrak{u}}_{H^{1}}^{2}+\mu\norm{\mathfrak{v}}_{H}^{2}\Big)-\Lambda(\mathfrak{u}),
	\end{equation*}
due to \eqref{sm2} we have
\[D_\mathfrak{u}\varphi_\mu(\mathfrak{u},\mathfrak{v})=(-A)\mathfrak{u}-f(\cdot,\mathfrak{u}),\ \ \ \ \ D_\mathfrak{v}\varphi_\mu(\mathfrak{u},\mathfrak{v})=\mu\,\mathfrak{v},\ \ \ \ \ D^2_\mathfrak{v}\varphi_\mu(\mathfrak{u},\mathfrak{v})=\mu\,I_H.\]
Then, we have	\begin{equation}\label{sm3}
		\begin{array}{l}
			\ds{\mathcal{N}_\mu\varphi_{\mu}(\mathfrak{u},\mathfrak{v})}\\[14pt]
			\ds{\quad =\frac{1}{2\mu}\text{Tr}_{H}\Big[\big(\sigma(\mathfrak{u})Q\big)(\sigma(\mathfrak{u})Q)^{\ast}\Big]+\Inner{\mathfrak{v},\mathfrak{u}-(-A)^{-1}F(\mathfrak{u})}_{H^{1}} +\frac{1}{\mu}\Inner{A\mathfrak{u}-\gamma(\mathfrak{u})\mathfrak{v}+F(\mathfrak{u}),\mu \mathfrak{v} }_{H} }\\[14pt]
			\ds{\quad=\frac{1}{2\mu}\norm{\sigma(\mathfrak{u})}_{\mathcal{L}_{2}(H_{Q},H)}^{2}+\Inner{\mathfrak{v},-A\mathfrak{u}-F(\mathfrak{u})}_{H} +\frac{1}{\mu}\Inner{A\mathfrak{u}-\gamma(\mathfrak{u})\mathfrak{v}+F(\mathfrak{u}),\mu \mathfrak{v} }_{H}}\\[14pt]
			\ds{\quad=\frac{1}{2\mu}\norm{\sigma(\mathfrak{u})}_{\mathcal{L}_{2}(H_{Q},H)}^{2}-\Inner{\gamma(\mathfrak{u})\mathfrak{v},\mathfrak{v}}_{H}
			\leq \frac{1}{2\mu}\norm{\sigma(\mathfrak{u})}_{\mathcal{L}_{2}(H_{Q},H)}^{2}-\gamma_{0}\norm{\mathfrak{v}}_{H}^{2} }.
		\end{array}
	\end{equation}
	By the invariance of $\nu_{\mu}^{\mathcal{H}_1}$ in $\mathcal{H}_{1}$, we have
	\begin{equation*}
		\int_{\mathcal{H}_1}\mathcal{N}_\mu\varphi_{\mu}(\mathfrak{u},\mathfrak{v})\,\nu_{\mu}^{\mathcal{H}_1}(d\mathfrak{u},d\mathfrak{v})=0,
	\end{equation*}
	and thus, due to \eqref{sm3} and \eqref{sgfine2}, 
	\begin{equation}\label{uniform1}
		\sup_{\mu \in\,(0,1)}\mu\int_{\mathcal{H}_1}\norm{\mathfrak{v}}_{H}^{2}\nu_{\mu}^{\mathcal{H}_1}(d\mathfrak{u},d\mathfrak{v})<\infty.
	\end{equation}
	Next, we consider the function 
	\begin{equation*}
		\psi_{\mu}(\mathfrak{u},\mathfrak{v}):=\frac{1}{2}\Big(\mu\norm{\mathfrak{u}}_{H^{1}}^{2}+\norm{g(\mathfrak{u})+\mu \mathfrak{v}}_{H}^{2}\Big).
	\end{equation*}
We have
\[D_\mathfrak{u}\psi(\mathfrak{u},\mathfrak{v})=\mu\,(-A)\mathfrak{u}+\gamma(\mathfrak{u})\left(g(\mathfrak{u})+\mu\,\mathfrak{v}\right),\ \ \ D_\mathfrak{v} \psi(\mathfrak{u},\mathfrak{v})=\mu\left(g(\mathfrak{u})+\mu\,\mathfrak{v}\right),\ \ \  D^2_\mathfrak{v}\psi(\mathfrak{u},\mathfrak{v})=\mu^2\,I_H,\]
	so that
	\begin{equation*}
		\begin{array}{ll}
			\ds{\mathcal{N}_\mu\psi_{\mu}(\mathfrak{u},\mathfrak{v})}
			\ds{=\frac{1}{2}\text{Tr}_{H}\Big[\big(\sigma(\mathfrak{u})Q\big)(\sigma(\mathfrak{u})Q)^{\ast}\Big]+\Inner{\mathfrak{v},\mu \mathfrak{u}+(-A)^{-1}\gamma(\mathfrak{u})g(\mathfrak{u})+\mu (-A)^{-1}\gamma(\mathfrak{u})\mathfrak{v}}_{H^{1}} }\\[14pt]
			\ds{\quad \quad \quad \quad \quad \quad \quad \quad \quad \quad \quad+\frac{1}{\mu}\Inner{A\mathfrak{u}-\gamma(\mathfrak{u})\mathfrak{v}+F(\mathfrak{u}),\mu^{2}\mathfrak{v}+\mu g(\mathfrak{u}) }_{H} }\\[14pt]
			\ds{=\frac{1}{2}\norm{\sigma(\mathfrak{u})}_{\mathcal{L}_{2}(H_{Q},H)}^{2}+\mu\Inner{\mathfrak{v},-A\mathfrak{u}+\frac{\gamma(\mathfrak{u})}\mu g(\mathfrak{u})+\gamma(\mathfrak{u})\mathfrak{v}}_{H} +\Inner{A\mathfrak{u}-\gamma(\mathfrak{u})\mathfrak{v}+F(\mathfrak{u}),\mu \mathfrak{v}+g(\mathfrak{u}) }_{H} }\\[14pt]
			\ds{=\frac{1}{2}\norm{\sigma(\mathfrak{u})}_{\mathcal{L}_{2}(H_{Q},H)}^{2}-\Inner{\gamma(\mathfrak{u})\nabla \mathfrak{u},\nabla \mathfrak{u}}_{H}+\mu\Inner{F(\mathfrak{u}),\mathfrak{v}}_{H}+\Inner{F(\mathfrak{u}),g(\mathfrak{u})}_{H} }.
		\end{array}
	\end{equation*}
	Note that for every $\delta>0$ and $\mu \in\,(0,1)$
	\begin{equation*}
		\mu\big\lvert\Inner{F(\mathfrak{u}),\mathfrak{v}}_{H}\big\rvert\leq \mu\norm{F(\mathfrak{u})}_{H}\norm{\mathfrak{v}}_{H}\leq \delta\big(1+\norm{\mathfrak{u}}_{H^{1}}^{2}\big)+c_\delta\,\mu^{2}\norm{\mathfrak{v}}_{H}^{2},	\end{equation*}
	and 
	\begin{equation*}
		\big\lvert\Inner{F(\mathfrak{u}),g(\mathfrak{u})}_{H}\big\rvert\leq \big(L_f\gamma_{1}+\delta\big)\norm{\mathfrak{u}}_{H}^{2}+c_\delta\leq \frac{1}{\alpha_{1}}\big(L_f\gamma_{1}+\delta\big)\norm{\mathfrak{u}}_{H^{1}}^{2}+c_\delta,
	\end{equation*}
	so that, due to the invariance of $\nu_\mu^{\mathcal{H}}$, we have
	\[\begin{aligned}
\gamma_0&\int_{\mathcal{H}_1} \Vert \mathfrak{u}\Vert_{H^1}^2\,\nu_\mu^{\mathcal{H}}\,(d\mathfrak{u},d\mathfrak{v})\\[10pt]
	&\leq \frac{\sigma_\infty^2}2+\left(\frac{L_f\,\gamma_1}{\alpha_1}+2\,\delta\right)\int_{\mathcal{H}_1} \Vert \mathfrak{u}\Vert_{H^1}^2\,\nu_\mu^{\mathcal{H}}\,(d\mathfrak{u},d\mathfrak{v})+c_\delta +c_\delta\,\mu^2 \int_{\mathcal{H}_1} \Vert \mathfrak{v}\Vert_{H}^2\,\nu_\mu^{\mathcal{H}}\,(d\mathfrak{u},d\mathfrak{v}).
		\end{aligned}\]
Thanks to \eqref{L_F_condition}, this implies that we can take $\delta>0$ sufficiently small so that  
\[\frac{L_f\,\gamma_1}{\alpha_1}+2\,\delta<\gamma_0,\]
and then 
	\begin{equation*}\label{uniform2}
\int_{\mathcal{H}_{1}}\norm{\mathfrak{u}}_{H^{1}}^{2}\nu_{\mu}^{\mathcal{H}_1}(d\mathfrak{u},d\mathfrak{v})\leq c\left(1+\mu^{2}\int_{\mathcal{H}_{1}}\norm{\mathfrak{v}}_{H}^{2}\nu_{\mu}^{\mathcal{H}_1}(d\mathfrak{u},d\mathfrak{v})\right),\ \ \  \mu\in(0,1).
	\end{equation*}
	By combining this with \eqref{uniform1}, we complete the proof of \eqref{uniform_invariant1}.
\end{proof}

\begin{Remark}
{\em \begin{enumerate}	
\item[1.] In Proposition \ref{sys_invariant}, we have seen that for every $\mu>0$ the semigroup $ P_t^{\mu, \mathcal{H}}$ admits an invariant measure. Thanks to Lemma \ref{sys_invariant_H1}, this implies that for every $\mu>0$ the transition semigroup ${P}_{t}^{\mu, \mathcal{H}_1}$ admits an invariant measure in $\H_{1}$.
\item[2.] As a consequence of \eqref{uniform_invariant1}, we have 
	\begin{equation}
\label{uniform_invariant}
	\sup_{\mu \in\,(0,1)}	\int_{\mathcal{H}}\left(\norm{\mathfrak{u}}_{H^{1}}^{2}+\mu\norm{\mathfrak{v}}_{H}^{2}\right)\nu_{\mu}^{ \mathcal{H}}(d\mathfrak{u},d\mathfrak{v})<\infty.
	\end{equation}
	\end{enumerate}}
\end{Remark}


\section{The limiting equation}

\label{limiting problem}

In order to study the limiting problem
\begin{equation}\label{limiting_problem}
	\left\{\begin{array}{l}
\displaystyle{\gamma(u(t,x))\partial_t u(t,x)= \Delta u(t,x)+f(x,u(t,x)) -\frac{\gamma'(u(t,x))}{2\gamma^2(u(t,x))} \sum_{i=1}^\infty |\sigma(u(t,\cdot))\,Qe_i(x)|^2 }\\[14pt]
\ds{\quad \quad \quad \quad \quad \quad \quad \quad \quad \quad  \quad+\sigma(u(t,\cdot))\partial_t w^Q(t,x)}\\[14pt]
		\displaystyle{u(0,x)=\mathfrak{u}_0(x), \ \ \ \ \ \ \ u(t,\cdot)\big|_{\partial\mathcal{O}}=0,}
	\end{array}\right.
\end{equation}
 we  consider first the following quasilinear stochastic parabolic equation
\begin{equation}\label{limit_para}
	\le\{\begin{array}{l}
		\ds{\partial_{t}\rho(t,x)=\text{div}\big(b(\rho(t,x))\nabla\rho(t,x)\big)+f_{g}(x,\rho(t,x))+\sigma_{g}(\rho(t,\cdot))\partial_{t}w^{Q}(t,x), }\\[10pt]
		\ds{\rho(0,x)=\mathfrak{r}_0(x),\ \ \ \ \ \ \  \rho(t,\cdot)|_{\partial\mathcal{O}}=0 },
	\end{array}\r.
\end{equation}
where for every $r\in\mathbb{R}$ and $x\in\mathcal{O}$
\begin{equation*}
	b(r):=\frac{1}{\gamma(g^{-1}(r))},\ \ \ \ f_{g}(x,r):=f(x,g^{-1}(r)),
\end{equation*}
and for every $h\in H$
\begin{equation*}
	\sigma_{g}(h):=\sigma\big(g^{-1}\circ h\big).
\end{equation*}
As we explained in \cite{cerraixi}, due to a generalized It\^o's formula, the solutions $u$ and $\rho$ of equations \eqref{limiting problem} and \eqref{limit_para}, respectively, are related by 
\begin{equation}  \label{tras}\rho^{\mathfrak{r}_0}(t)=u^{\mathfrak{u}_0}(t),\ \ \ \ \ t\geq 0,\ \ \ \ \ \mathfrak{r}_0:=g(\mathfrak{u}_0).\end{equation}
From Hypothesis \ref{Hypothesis2}, we know 
\begin{equation*}
	\frac{1}{\gamma_{1}}\leq b(r)\leq \frac{1}{\gamma_{0}},\ \ \   r\in\mathbb{R}.
\end{equation*}
Moreover, if we define
\[F_g(h)(x):=f_g(x,h(x)),\ \ \ \ \ x \in\,\mathcal{O},\]
due to  Hypotheses \ref{Hypothesis1}, \ref{Hypothesis2} and \ref{Hypothesis3}, and due to \eqref{sm3},  for every $h_{1},h_{2}\in H$ we have
\begin{equation*}
	\norm{F_{g}(h_{1})-F_{g}(h_{2})}_{H^{-1}}\leq \frac{1}{\sqrt{\alpha_{1}}}\norm{F_{g}(h_{1})-F_{g}(h_{2})}_{H}\leq \frac{L_f}{\sqrt{\alpha_{1}}\gamma_{0}}\norm{h_{1}-h_{2}}_{H},
\end{equation*}
and
\begin{equation*}
	\norm{\sigma_{g}(h_{1})-\sigma_{g}(h_{2})}_{\mathcal{L}_{2}(H_{Q},H^{-1})}\leq \frac{1}{\sqrt{\alpha_{1}}}\norm{\sigma_{g}(h_{1})-\sigma_{g}(h_{2})}_{\mathcal{L}_{2}(H_{Q},H)}\leq \frac{\sqrt{L_\sigma}}{\sqrt{\alpha_{1}}\gamma_{0}}\norm{h_{1}-h_{2}}_{H}.
\end{equation*}
Moreover,  for every $\delta>0$ 
\begin{equation}\label{Fg2}
	\left\lvert\Inner{F_{g}(h),h}_{H}\right\rvert\leq \left(\frac{L_f}{\gamma_{0}}+\delta\right)\norm{h}_{H}^{2}+c_\delta\leq \frac{1}{\alpha_{1}}\left(\frac{L_f}{\gamma_{0}}+\delta\right)\norm{h}_{H^{1}}^{2}+c_\delta,\ \ \   h\in H^{1},
\end{equation}
and 
\begin{equation}\label{Fg3}
		\left\lvert\Inner{F_{g}(h),h}_{H^{-1}}\right\rvert\leq \frac{1}{\alpha_{1}}\norm{F_{g}(h)}_{H}\norm{h}_{H} \leq \frac{1}{\alpha_{1}}\left(\frac{L_f}{\gamma_{0}}+\delta\right)\norm{h}_{H}^{2}+c_\delta,\ \ \   h\in H.
\end{equation}
Finally, thanks to \eqref{sgfine2} we have
\begin{equation}\label{5.5}
	\norm{\sigma_{g}(h)}_{\mathcal{L}_{2}(H_{Q},H)}\leq \sigma_{\infty},\ \ \ \ \ \ \   h\in H.
\end{equation}

In Appendix \ref{A} we have studied the well-posedness of equation \eqref{limit_para} in the space $H$. We have shown that under Hypotheses \ref{Hypothesis1}, \ref{Hypothesis2} and \ref{Hypothesis3}, for every $T>0$ and every $\mathfrak{r}_0\in L^{2}(\Omega;H)$, there exists a unique  adapted  $\rho\in L^{2}(\Omega;C([0,T];H)\cap L^{2}(0,T;H^{1}))$ such that 
for every test function $\varphi\in C^{\infty}_{0}(\mathcal{O})$
	\[	\begin{array}{ll}
			\ds{\Inner{\rho(t),\varphi}_{H} }
			&\ds{=\Inner{\mathfrak{r}_0,\varphi}_{H}-\int_{0}^{t}\Inner{b(\rho(s))\nabla\rho(s),\nabla\varphi}_{H}ds }\\
			\vs
			&\ds{\quad+\int_{0}^{t}\Inner{F_{g}(\rho(s)),\varphi}_{H}ds+\int_{0}^{t}\Inner{\varphi,\sigma_{g}(\rho(s))dw^{Q}(s)}_{H},\ \ \ \P\text{-a.s.} }
		\end{array}
	\]
	In fact, in Appendix \ref{A} we do not need condition \eqref{L_F_condition} and we just replace Hypothesis \ref{Hypothesis3} with	\[	\sup_{x \in\,\mathcal{O}}\abs{f(x,r)-f(x,s)}\leq c\,\abs{r-s},\ \ \ r,s\in\mathbb{R},\ \ \ \ \ \ \ \sup_{x\in\mathcal{O}}\abs{f(x,0)}<\infty.
	\]

	\begin{Lemma}
Under Hypotheses \ref{Hypothesis1}, \ref{Hypothesis2} and \ref{Hypothesis3},  there exist some $\lambda>0$ and $c>0$, such that for every $t\geq 0$
	\begin{equation}
		\label{lim_ene3}
		\E\norm{\rho(t)}_{H}^{2}\leq c\left(1+e^{-\lambda t}\E\norm{\mathfrak{r}_0}_{H}^{2}\right),\ \ \ \ \ \ \ \E\int_{0}^{t}\norm{\rho(s)}_{H^{1}}^{2}ds\leq c\,\left(t+\E\norm{\mathfrak{r}_0}_{H}^{2}\right).
\end{equation}
	\end{Lemma}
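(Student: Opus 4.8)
The plan is to derive both inequalities from a single energy estimate for $t\mapsto\E\norm{\rho(t)}_{H}^{2}$, obtained by applying It\^o's formula to the square of the $H$-norm. Since $\rho\in L^{2}(\Omega;C([0,T];H)\cap L^{2}(0,T;H^{1}))$ and the principal part $\rho\mapsto\divg(b(\rho)\nabla\rho)$ maps $H^{1}$ into $H^{-1}$, the natural framework is the Gelfand triple $H^{1}\hookrightarrow H\hookrightarrow H^{-1}$, in which the It\^o formula for $\norm{\rho(t)}_{H}^{2}$ is available; it can be made rigorous through the same Galerkin approximation used to construct $\rho$ in Appendix \ref{A}. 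In differential form this reads
\begin{equation*}
d\norm{\rho(t)}_{H}^{2}=2\Inner{\rho(t),\divg(b(\rho(t))\nabla\rho(t))+F_{g}(\rho(t))}\,dt+\norm{\sigma_{g}(\rho(t))}_{\mathcal{L}_{2}(H_{Q},H)}^{2}\,dt+2\Inner{\rho(t),\sigma_{g}(\rho(t))\,dw^{Q}(t)}_{H}.
\end{equation*}

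Next I would estimate the drift. Integrating by parts and using the Dirichlet boundary condition together with the lower bound $b(r)\geq 1/\gamma_{1}$ from Hypothesis \ref{Hypothesis2}, the principal part is coercive:
\begin{equation*}
\Inner{\rho,\divg(b(\rho)\nabla\rho)}=-\int_{\mathcal{O}}b(\rho)|\nabla\rho|^{2}\,dx\leq-\frac{1}{\gamma_{1}}\norm{\rho}_{H^{1}}^{2}.
\end{equation*}
Combining this with \eqref{Fg2} for the reaction term and \eqref{5.5} for the It\^o correction, I obtain, for every $\delta>0$,
\begin{equation*}
d\norm{\rho(t)}_{H}^{2}\leq\le(-2\le(\frac{1}{\gamma_{1}}-\frac{1}{\alpha_{1}}\le(\frac{L_f}{\gamma_{0}}+\delta\r)\r)\norm{\rho(t)}_{H^{1}}^{2}+c\r)dt+2\Inner{\rho(t),\sigma_{g}(\rho(t))\,dw^{Q}(t)}_{H}.
\end{equation*}
The role of condition \eqref{L_F_condition} is precisely to make the leading coefficient strictly negative: since $L_f<\alpha_{1}\gamma_{0}/\gamma_{1}$, one has $1/\gamma_{1}>L_f/(\alpha_{1}\gamma_{0})$, so $\delta>0$ can be fixed small enough that $\beta:=1/\gamma_{1}-(L_f/\gamma_{0}+\delta)/\alpha_{1}>0$.

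To conclude, I would use the Poincar\'e inequality $\norm{\rho}_{H^{1}}^{2}\geq\alpha_{1}\norm{\rho}_{H}^{2}$ and set $\lambda:=2\alpha_{1}\beta>0$. Taking expectations---the stochastic integral is a genuine martingale with zero mean thanks to the boundedness \eqref{5.5} and $\rho\in L^{2}(\Omega;C([0,T];H))$---gives the differential inequality $\frac{d}{dt}\E\norm{\rho(t)}_{H}^{2}\leq-\lambda\,\E\norm{\rho(t)}_{H}^{2}+c$, and Gronwall's lemma yields the first bound in \eqref{lim_ene3}. For the second bound I would instead integrate the inequality in the displayed energy estimate (before invoking Poincar\'e) over $[0,t]$, take expectations, and discard the nonnegative term $\E\norm{\rho(t)}_{H}^{2}$ on the left, which leaves $2\beta\,\E\int_{0}^{t}\norm{\rho(s)}_{H^{1}}^{2}ds\leq\E\norm{\mathfrak{r}_0}_{H}^{2}+ct$.

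The main obstacle is not the algebra but the rigorous justification of the It\^o formula and of the martingale property of the stochastic term in this low-regularity variational setting. I would handle this by working at the level of the Galerkin (or Yosida) approximants, for which all computations are classical, deriving the estimates uniformly in the approximation parameter, and then passing to the limit using the lower semicontinuity of the norms and the convergence already established in Appendix \ref{A}; a standard localization by stopping times removes any integrability concern in the expectation of the stochastic integral.
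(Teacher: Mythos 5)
Your argument is correct and follows essentially the same route as the paper's own proof: It\^o's formula for $\norm{\rho(t)}_{H}^{2}$, the coercivity bound $\Inner{\rho,\divg(b(\rho)\nabla\rho)}_{H}\leq-\gamma_{1}^{-1}\norm{\rho}_{H^{1}}^{2}$, estimates \eqref{Fg2} and \eqref{5.5}, and condition \eqref{L_F_condition} to obtain the differential inequality $\frac{d}{dt}\E\norm{\rho(t)}_{H}^{2}+\lambda\,\E\norm{\rho(t)}_{H^{1}}^{2}\leq c$, from which both bounds follow. You simply make explicit some steps (the choice of $\delta$, Poincar\'e, and the justification of It\^o's formula via approximation) that the paper leaves implicit.
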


\begin{proof}
	We apply  It\^o's formula and we get
	\begin{equation*}
	\begin{aligned}
\frac{1}{2}&d\norm{\rho(t)}_{H}^{2}\leq -\gamma_{1}^{-1}\norm{\rho(t)}_{H^{1}}^{2}dt+\Inner{F_{g}(\rho(t)),\rho(t)}_{H}dt+
\frac{1}{2}\norm{\sigma_{g}(\rho(t))}_{\mathcal{L}_{2}(H_{Q},H)}^{2}dt\\[10pt]
&\hsp\hsp+\inner{\rho(t),\sigma_{g}(\rho(t))dw^{Q}(t)}_{H}.		\end{aligned}
\end{equation*}
	Then thanks to \eqref{Fg2} and \eqref{L_F_condition}, together with \ref{5.5}, we can find some constant $\lambda>0$ such that
	 \begin{equation*}
	 	\frac{d}{dt}\E\norm{\rho(t)}_{H}^{2}+\lambda\E\norm{\rho(t)}_{H^{1}}^{2}\leq c,
	 \end{equation*}
	  and this allows we complete the proof. 
\end{proof}

Due to the results proved in Appendix \ref{A} and \eqref{tras}, equation \eqref{limiting_problem} is well-posed in the space $L^{2}(\Omega;C([0,T];H)\cap L^{2}(0,T;H^{1}))$. Moreover, due to  estimates \eqref{lim_ene3} and the Lipschitz continuity of $g$ and $g^{-1}$ on $\mathbb{R}$, estimates analogous to \eqref{lim_ene3} holds for the solution $u$.
\begin{Proposition}
	Assume  Hypotheses \ref{Hypothesis1}, \ref{Hypothesis2} and \ref{Hypothesis3}. For every $T>0$ and every $\mathfrak{u}_0\in L^{2}(\Omega;H)$, there exists a unique $u\in L^{2}(\Omega;C([0,T];H)\cap L^{2}(0,T;H^{1}))$ which solves  equation \eqref{limiting_problem} in the following sense
	\begin{align*}
\langle u(t),\psi\rangle_H &=\langle \mathfrak{u}_0,\psi\rangle_H -\int_0^t \le\langle \frac{\nabla u(s)}{\gamma(u(s))}, \nabla\psi \r\rangle_H ds-\int_0^t \le\langle \nabla \le(\frac{1}{\gamma(u(s))} \r)\cdot \nabla u(s),\psi \r\rangle_H  ds\\[10pt]
&\quad +\int_0^t \le\langle \frac{f(u(s))}{\gamma(u(s))},\psi \r\rangle_H ds -\int_0^t \le\langle \frac{\gamma '(u(s))}{2\gamma(u(s))^3} \,\sum_{i=1}^\infty (\si(u(s))Qe_i)^2,\psi \r\rangle_H ds\\[10pt] 
&\quad \quad \quad \quad \quad \quad \quad+\int_0^t \le\langle \frac{\si(u(s))}{\gamma(u(s))} \,d w^Q (s),\psi \r\rangle_H,
\end{align*}
for any $\varphi\in C^\infty_0(\mathcal{O})$. Moreover, for every $t\geq 0$
	\begin{equation*}
		\E\norm{u(t)}_{H}^{2}\leq c\,\left(1+e^{-\lambda t}\E\norm{\mathfrak{u}_0}_{H}^{2}\right),\ \ \ \ \ \ 
		\E\int_{0}^{t}\norm{u(s)}_{H^{1}}^{2}ds\leq c\left(t+\E\norm{\mathfrak{u}_0}_{H}^{2}\right).
	\end{equation*}
	\end{Proposition}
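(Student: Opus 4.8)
The plan is to construct the solution $u$ of \eqref{limiting_problem} directly from the already-established solution $\rho$ of the divergence-form equation \eqref{limit_para}, and to transfer every assertion---membership in the solution space, the weak formulation, and the two moment bounds---across the bi-Lipschitz change of unknown $\rho=g(u)$, $u=g^{-1}(\rho)$. Recall from Appendix \ref{A} that for every $\mathfrak{r}_0\in L^2(\Omega;H)$ equation \eqref{limit_para} has a unique solution $\rho\in L^{2}(\Omega;C([0,T];H)\cap L^{2}(0,T;H^{1}))$, and from the preceding Lemma that $\rho$ obeys the estimates \eqref{lim_ene3}. Since $g$ is Lipschitz with $g(0)=0$ and Hypothesis \ref{Hypothesis2} gives $1/\gamma_1\le (g^{-1})^\prime\le 1/\gamma_0$, the pointwise bounds $|g(r)|\le \gamma_1|r|$ and $|g^{-1}(r)|\le \gamma_0^{-1}|r|$ hold.

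First I would settle existence. Given $\mathfrak{u}_0\in L^2(\Omega;H)$, set $\mathfrak{r}_0:=g(\mathfrak{u}_0)$; then $\E\norm{\mathfrak{r}_0}_H^2\le \gamma_1^2\,\E\norm{\mathfrak{u}_0}_H^2<\infty$, so Appendix \ref{A} furnishes the corresponding $\rho$, and I would define $u:=g^{-1}(\rho)$. Because the Nemytskii maps $h\mapsto g\circ h$ and $h\mapsto g^{-1}\circ h$ are homeomorphisms both of $H$ and of $H^1$ (as recalled in the Introduction), $u$ inherits the path regularity of $\rho$ and lies in $L^{2}(\Omega;C([0,T];H)\cap L^{2}(0,T;H^{1}))$. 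Uniqueness of $u$ then follows from uniqueness of $\rho$ together with the injectivity of $g$: two solutions of \eqref{limiting_problem} would, via $\rho=g(u)$, produce two solutions of \eqref{limit_para} sharing the datum $\mathfrak{r}_0$, hence coincide.

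The one genuinely nontrivial step is to verify that $u=g^{-1}(\rho)$ actually solves the stated weak form of \eqref{limiting_problem}, and this is exactly where the generalized It\^o (chain-rule) formula established in \cite{cerraixi} is invoked: it yields the transformation rule \eqref{tras}, i.e. $\rho=g(u)$, and at the same time rewrites the divergence-form variational identity for \eqref{limit_para} as the identity tested against $\psi\in C^\infty_0(\mathcal{O})$ displayed in the statement. Concretely, dividing \eqref{limiting_problem} by $\gamma(u)$ and using $\gamma(u)^{-1}\Delta u=\divg(\gamma(u)^{-1}\nabla u)-\nabla(\gamma(u)^{-1})\cdot\nabla u$ reproduces the tested identity term by term; the delicate point is justifying the chain rule for the merely $H^1$-valued process $\rho$, which is precisely what is carried out in \cite{cerraixi} and which I would simply quote. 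I expect this to be the main obstacle in a self-contained treatment, although here it is available off the shelf.

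Finally, the moment bounds transfer mechanically. From $\norm{u}_H\le \gamma_0^{-1}\norm{\rho}_H$ and, since $\norm{v}_{H^1}=\norm{\nabla v}_H$ for $v\in H^1_0$ together with $\nabla u=(g^{-1})^\prime(\rho)\,\nabla\rho$, also $\norm{u}_{H^1}\le \gamma_0^{-1}\norm{\rho}_{H^1}$, I would insert the estimates \eqref{lim_ene3} and the bound $\E\norm{\mathfrak{r}_0}_H^2\le \gamma_1^2\,\E\norm{\mathfrak{u}_0}_H^2$. This gives $\E\norm{u(t)}_H^2\le c\,(1+e^{-\lambda t}\E\norm{\mathfrak{u}_0}_H^2)$ and $\E\int_0^t\norm{u(s)}_{H^1}^2\,ds\le c\,(t+\E\norm{\mathfrak{u}_0}_H^2)$, with the same exponential rate $\lambda$ as in \eqref{lim_ene3} and a suitably enlarged constant $c$.
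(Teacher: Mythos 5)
Your proposal is correct and follows essentially the same route as the paper: it constructs $u=g^{-1}(\rho)$ from the Appendix~\ref{A} solution of \eqref{limit_para} with datum $\mathfrak{r}_0=g(\mathfrak{u}_0)$, quotes the generalized It\^o/chain-rule equivalence from \cite{cerraixi} (i.e.\ \eqref{tras}) to pass between the two weak formulations, and transfers uniqueness and the moment bounds \eqref{lim_ene3} via the bi-Lipschitz bounds on $g$ and $g^{-1}$. The paper's own proof is just a condensed version of exactly this argument, so nothing further is needed.
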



\subsection{Well-posedness of equation \eqref{limit_para} in $H^{-1}$} 
Here, we will use the results we have just mentioned about the well-posedness of equation \eqref{limit_para} in $H$, to study its well-posedness in $H^{-1}$.
\begin{Definition}
	For every fixed $\mathfrak{r}_0\in H^{-1}$ and $T>0$, an adapted process $\rho\in L^{2}(\Omega;L^{2}(0,T;H))$ is  a  solution of equation \eqref{limit_para} with initial condition $\rho_0$ if for every  $\varphi\in C^{\infty}_{0}(\mathcal{O})$
	\[
			\Inner{\rho(t),\varphi}_{H} 
			=\Inner{\mathfrak{r}_0,\varphi}_{H}+\int_{0}^{t}\Inner{B(\rho(s)),\Delta\varphi}_{H}ds+\int_{0}^{t}\Inner{F_{g}(\rho(s)),\varphi}_{H}ds+\int_{0}^{t}\Inner{\varphi,\sigma_{g}(\rho(s))dw^{Q}(s)}_{H},	\]
	$\mathbb{P}$-a.s., where 
	\begin{equation*}
		B(r):=\int_{0}^{r}b(s)ds,\ \ \   r\in\mathbb{R}.
	\end{equation*}
\end{Definition}

\begin{Proposition}\label{weak_solution}
	Assume Hypotheses \ref{Hypothesis1}, \ref{Hypothesis2} \ref{Hypothesis3} and \ref{Hypothesis_control}. Then, for every $\mathfrak{r}_{0}\in H^{-1}$ and every $T>0$, there exists a unique  solution
	\begin{equation*}
		\rho^{\mathfrak{r}_{0}}\in L^{2}(\Omega;C([0,T];H^{-1})\cap L^{2}(0,T;H)) 
	\end{equation*}
	to equation \eqref{limit_para}. Moreover, there exist $c, \lambda >0$ independent of $T>0$ such that for every $t \in\,[0,T]$
	\begin{equation}\label{lim_ene1}
		\mathbb{E}\norm{\rho^{\mathfrak{r}_{0}}(t)}_{H^{-1}}^{2}\leq c\left(1+e^{-\lambda t}\,\E\norm{\mathfrak{r}_{0}}_{H^{-1}}^{2}\right),\ \ \ \ \ \ \ \ \E\int_{0}^{t}\norm{\rho^{\mathfrak{r}_{0}}(s)}_{H}^{2}ds\leq c\left(t+\mathbb{E}\norm{\mathfrak{r}_{0}}_{H^{-1}}^{2}\right).
	\end{equation}
\end{Proposition}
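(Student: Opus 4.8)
The plan is to bootstrap the $H^{-1}$-theory from the $H$-theory of Appendix \ref{A} through an approximation scheme whose engine is a strictly dissipative It\^o estimate for the $H^{-1}$-norm. I work in the Gelfand triple $H\hookrightarrow H^{-1}\hookrightarrow H^{-2}$ with pivot space $H^{-1}$. Since $B$ is Lipschitz with $B(0)=0$, for $\rho\in H$ one has $B(\rho)\in H$ and hence $\Delta B(\rho)=AB(\rho)\in H^{-2}=V^{\ast}$, while $\sigma_{g}(\rho)$ is valued in the pivot space; this is exactly the structure under which the variational It\^o formula for $\norm{\cdot}_{H^{-1}}^{2}$ applies, and I shall use the spectral identity ${}_{V^{\ast}}\langle \Delta v,w\rangle_{V}=-\Inner{v,w}_{H}$ for $v,w\in H$.

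\smallskip\noindent The heart of the argument is a difference estimate. Let $\mathfrak{r}_1,\mathfrak{r}_2\in L^2(\Omega;H)$ and let $\rho_1,\rho_2$ be the corresponding solutions furnished by Appendix \ref{A}. Applying the variational It\^o formula to $w:=\rho_1-\rho_2$, the diffusion term produces $-\Inner{B(\rho_1)-B(\rho_2),w}_{H}\leq -\gamma_1^{-1}\norm{w}_{H}^{2}$, by the monotonicity $(B(a)-B(b))(a-b)\geq\gamma_1^{-1}|a-b|^{2}$ coming from $B'=b\geq\gamma_1^{-1}$. The two lower-order drift terms are controlled by the Lipschitz bounds for $F_g$ and $\sigma_g$ established above, giving $\Inner{F_g(\rho_1)-F_g(\rho_2),w}_{H^{-1}}\leq (L_f/\alpha_1\gamma_0)\norm{w}_H^2$ and $\tfrac12\norm{\sigma_g(\rho_1)-\sigma_g(\rho_2)}_{\mathcal{L}_2(H_Q,H^{-1})}^{2}\leq (L_\sigma/2\alpha_1\gamma_0^{2})\norm{w}_H^2$. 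Summing, the drift of $\tfrac12\norm{w}_{H^{-1}}^{2}$ is at most $-\kappa\norm{w}_H^2$ with $\kappa:=\gamma_1^{-1}-L_f/(\alpha_1\gamma_0)-L_\sigma/(2\alpha_1\gamma_0^2)$, and Hypothesis \ref{Hypothesis_control} is precisely the requirement $\kappa>0$. Taking expectations kills the martingale and yields
\[\E\norm{w(t)}_{H^{-1}}^{2}+2\kappa\,\E\int_0^t\norm{w(s)}_H^2\,ds\leq \E\norm{\mathfrak{r}_1-\mathfrak{r}_2}_{H^{-1}}^{2};\]
since $\norm{w}_H^2\geq\alpha_1\norm{w}_{H^{-1}}^2$, Gronwall's lemma gives $\E\norm{w(t)}_{H^{-1}}^{2}\leq e^{-\lambda t}\E\norm{\mathfrak{r}_1-\mathfrak{r}_2}_{H^{-1}}^{2}$ with $\lambda=2\kappa\alpha_1$, i.e. \eqref{contraction_intro}, and a Burkholder--Davis--Gundy estimate on the martingale upgrades it to a bound on $\E\sup_{t\leq T}\norm{w(t)}_{H^{-1}}^2$.

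\smallskip\noindent Existence then follows by approximation. Given $\mathfrak{r}_0\in H^{-1}$, I choose $\mathfrak{r}_0^n\in H$ with $\mathfrak{r}_0^n\to\mathfrak{r}_0$ in $H^{-1}$ and let $\rho^n$ be the $H$-solutions. Applied to $\rho^n-\rho^m$, the displayed inequality shows at once that $(\rho^n)$ is Cauchy in $L^2(\Omega;C([0,T];H^{-1}))$ and, crucially, strongly Cauchy in $L^2(\Omega\times[0,T];H)$, the latter because $\kappa>0$ makes the $H$-norm coefficient strictly dissipative. The limit $\rho$ lies in $L^2(\Omega;C([0,T];H^{-1})\cap L^2(0,T;H))$, and the strong $L^2(\Omega\times[0,T];H)$ convergence lets me pass to the limit in the nonlinear terms: Lipschitz continuity of $B$, $F_g$, $\sigma_g$ from $H$ gives $B(\rho^n)\to B(\rho)$, $F_g(\rho^n)\to F_g(\rho)$ in $L^2(\Omega\times[0,T];H)$ and $\sigma_g(\rho^n)\to\sigma_g(\rho)$ in $L^2(\Omega\times[0,T];\mathcal{L}_2(H_Q,H))$, so each term of the weak formulation converges and $\rho$ solves \eqref{limit_para}. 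Uniqueness is immediate from the same difference estimate applied to two $H^{-1}$-solutions sharing an initial datum, the variational It\^o formula being legitimate since both lie in $L^2(0,T;H)$. For the bounds \eqref{lim_ene1}, I apply the It\^o formula to $\norm{\rho(t)}_{H^{-1}}^2$: the diffusion gives $-\Inner{B(\rho),\rho}_H\leq-\gamma_1^{-1}\norm{\rho}_H^2$, while \eqref{Fg3} and the uniform bound \eqref{5.5} give $\Inner{F_g(\rho),\rho}_{H^{-1}}\leq\alpha_1^{-1}(L_f/\gamma_0+\delta)\norm{\rho}_H^2+c_\delta$ and $\tfrac12\norm{\sigma_g(\rho)}_{\mathcal{L}_2(H_Q,H^{-1})}^2\leq c$. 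Choosing $\delta$ small, condition \eqref{L_F_condition} renders the net $\norm{\rho}_H^2$ coefficient strictly negative, so that $\tfrac{d}{dt}\E\norm{\rho(t)}_{H^{-1}}^2+2\kappa'\,\E\norm{\rho(t)}_H^2\leq c$ for some $\kappa'>0$; Gronwall (using $\norm{\rho}_H^2\geq\alpha_1\norm{\rho}_{H^{-1}}^2$) and direct integration then yield the two inequalities in \eqref{lim_ene1}. These are first derived for the regular approximants $\rho^n$ and pass to the limit by lower semicontinuity.

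\smallskip\noindent I expect the main obstacle to be the passage to the limit in the quasilinear diffusion $\Delta B(\rho^n)$: mere weak $L^2(\Omega\times[0,T];H)$ compactness of $(\rho^n)$ would not survive the nonlinearity $B$. The resolution is exactly the point highlighted above, namely that Hypothesis \ref{Hypothesis_control} forces $\kappa>0$ and thereby converts the difference estimate into a source of \emph{strong} $L^2(\Omega\times[0,T];H)$ convergence, which is what legitimizes the limit in all the nonlinear coefficients.
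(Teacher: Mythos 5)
Your proposal is correct and follows essentially the same route as the paper: approximation of $\mathfrak{r}_0$ by $H$-valued data, the It\^o estimate for $\norm{\cdot}_{H^{-1}}^2$ of the difference using the monotonicity $(B(r_1)-B(r_2))(r_1-r_2)\geq\gamma_1^{-1}\abs{r_1-r_2}^2$ together with the $H^{-1}$-Lipschitz bounds on $F_g$ and $\sigma_g$, with Hypothesis \ref{Hypothesis_control} yielding the same strictly positive constant (the paper's $c_0$ equals your $\kappa$) that makes the approximants strongly Cauchy in $L^2(\Omega\times[0,T];H)$ and legitimizes the limit in the nonlinear terms. The derivation of \eqref{lim_ene1} via \eqref{Fg3}, \eqref{5.5} and \eqref{L_F_condition} also matches the paper's argument.
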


\begin{proof} We fix an arbitrary  sequence $\{\mathfrak{r}_{\epsilon}\}_{\epsilon>0}\subset H$ converging to $\mathfrak{r}_0$ strongly in $H^{-1}$, as $\epsilon\to0$. Thanks to Proposition \ref{weak_solution_app},  for each $\epsilon>0$   there exists a  solution $\rho_{\epsilon} \in\,L^{2}(\Omega;C([0,T];H)\cap L^{2}([0,T];H^{1}))$ for problem \eqref{limit_para} with initial condition $\mathfrak{r}_{\epsilon}$. If for every $\epsilon,\delta>0$ we define 
\[\vartheta_{\epsilon, \delta}(t):=\rho_{\epsilon}(t)-\rho_{\delta}(t),\ \ \ \ \ \ t \in\,[0,T],\] we have
\begin{equation*}
	\begin{array}{l}
		\ds{\frac{1}{2}\,d\norm{\vartheta_{\epsilon, \delta}(t)}_{H^{-1}}^{2}}\\[14pt]
		\ds{=-\Inner{B(\rho_{\epsilon}(t))-B(\rho_{\delta}(t)),\vartheta_{\epsilon, \delta}(t)}_{H}dt +\Inner{F_{g}(\rho_{\epsilon}(t))-F_{g}(\rho_{\delta}(t)),\vartheta_{\epsilon, \delta}(t) }_{H^{-1}}dt }\\[14pt]
		\ds{\quad +\frac{1}{2}\norm{\sigma_{g}(\rho_{\epsilon}(t))-\sigma_{g}(\rho_{\delta}(t))}_{\mathcal{L}_{2}(H_{Q},H^{-1})}^{2}dt +\Inner{\vartheta_{\epsilon, \delta}(t),\big[\sigma_{g}(\rho_{\epsilon}(t))-\sigma_{g}(\rho_{\delta}(t))\big]dw^{Q}(t)}_{H^{-1}}. }
		\end{array}
\end{equation*} 
Since
\[B(r)=\int_0^r b(s)\,ds=\int_0^r\frac{1}{\gamma(g^{-1}(s))}\,ds,\]
we have
\[( B(r_1)-B(r_2))(r_1-r_2)\geq \frac 1{\gamma_1}\,|r_1-r_2|^2,\ \ \ \ \ \ r_1, r_2 \in\,\mathbb{R},\]
so that
\begin{equation}\label{sm30}
	\begin{array}{l}
		\ds{\frac{1}{2}\,d\norm{\vartheta_{\epsilon, \delta}(t)}_{H^{-1}}^{2}\leq -c_0 \norm{\vartheta_{\epsilon, \delta}(t)}_{H}^{2}dt+ \Inner{\vartheta_{\epsilon, \delta}(t),\big[\sigma_{g}(\rho_{\epsilon}(t))-\sigma_{g}(\rho_{\delta}(t))\big]dw^{Q}(t)}_{H^{-1}}, }
	\end{array}\end{equation}
		where
		\[c_0
		:=\left(\frac{1}{\gamma_{1}}-\frac{L_\sigma}{2\alpha_{1}\gamma_{0}^{2}}-\frac{L_f}{\alpha_{1}\gamma_{0}}\right)>0,\]
last inequality following from \eqref{additional_condition}.

Hence, if we  first integrate both sides in \eqref{sm30} with respect to time and then   take  the expectation, we get
\begin{equation*}
\sup_{s \in\,[0,T]}\mathbb{E}\,\norm{\vartheta_{\epsilon, \delta}(s)}_{H^{-1}}^{2}+2\,c_0\int_0^T \E\norm{\vartheta_{\epsilon, \delta}(s)}_{H}^{2}\,ds\leq \E\,\Vert \mathfrak{r}_{\epsilon}-\mathfrak{r}_{\delta}\Vert_{H^{-1}}^2
\end{equation*} 
and this implies that the sequence $(\rho_{\epsilon})$ is Cauchy in $C([0,T];L^2(\Omega;H^{-1}))\cap L^2(\Omega;L^{2}(0,T;H)))$. In particular, it converges to some $\rho$ in $C([0,T];L^2(\Omega;H^{-1}))\cap L^2(\Omega;L^{2}(0,T;H)))$, as $\epsilon\to 0$. 
For every $\epsilon>0$ and $\varphi \in\,C^\infty_0(\mathcal{O})$ we have
\[\begin{aligned}
\Inner{\rho_\epsilon(t),\varphi}_{H} 
			&=\Inner{\mathfrak{r}_{\epsilon},\varphi}_{H}+\int_{0}^{t}\!\left(\Inner{B(\rho_\epsilon(s)),\Delta\varphi}_{H}+\Inner{F_{g}(\rho_\epsilon(s)),\varphi}_{H}\right)ds+\int_{0}^{t}\!\Inner{\varphi,\sigma_{g}(\rho_\epsilon(s))dw^{Q}(s)}_{H},	\end{aligned}
\]
Then, due to the Lipschitz continuity of $B$, $F_{g}$ and $\sigma_{g}$, we can take the limit in both sides of the identity above, as $\epsilon\to 0$, and we get  that $\rho$ is a  solution for \eqref{limit_para}.

To prove the uniqueness, assume that $\rho_{1},\rho_{2}$ are two solutions to \eqref{limit_para}. By proceeding as above, we have
\begin{equation*}
	\sup_{t \in\,[0,T]}\E\,\norm{\rho_{1}(t)-\rho_{2}(t)}_{H^{-1}}^{2}+c_0\int_0^T\E\,\norm{\rho_{1}(t)-\rho_{2}(t)}_{H}^{2}\,dt\leq0,
\end{equation*} 
which gives $\rho_{1}=\rho_{2}$. 

Next, we prove that $\rho \in\,L^2(\Omega;C([0,T];H^{-1})\cap L^{2}(0,T;H)))$. We apply  It\^o's formula to $\norm{\rho}_{H^{-1}}^{2}$ and we get
	\begin{equation*}
		\begin{array}{l}
			\ds{\frac{1}{2}d\norm{\rho(t)}_{H^{-1}}^{2} }
			\ds{= \frac{1}{2}\norm{\sigma_{g}(\rho(t))}_{\mathcal{L}_{2}(H_{Q},H^{-1})}^{2}dt-\inner{ B(\rho(t)),\rho(t)}_{H}dt}\\[14pt]
			\ds{\hsp\hsp+\Inner{F_{g}(\rho(t)),\rho(t)}_{H^{-1}}dt+\inner{\rho(t),\sigma_{g}(\rho(t))dw^{Q}(t)}_{H^{-1}}}\\[14pt]
			\ds{\hsp \quad\leq c-\gamma_{1}^{-1}\norm{\rho(t)}_{H}^{2}dt+\Inner{F_{g}(\rho(t)),\rho(t)}_{H^{-1}}dt+\inner{\rho(t),\sigma_{g}(\rho(t))dw^{Q}(t)}_{H^{-1}}}.
		\end{array}
	\end{equation*}
Due to \eqref{L_F_condition} there exists $\bar{\delta}>0$ such that
\[c_1:=\frac 1{\gamma_1}-\frac{L_f+\bar{\delta}}{\alpha_1 \gamma_0}>0,\]
so that, thanks to 
\eqref{Fg3} we have
\begin{equation}\label{sm-tris1}
		\begin{array}{l}
			\ds{\frac{1}{2}d\norm{\rho(t)}_{H^{-1}}^{2} \leq c-c_1\norm{\rho(t)}_{H}^{2}dt+\inner{\rho(t),\sigma_{g}(\rho(t))dw^{Q}(t)}_{H^{-1}}}.
		\end{array}
	\end{equation}
Now, since we have
\[\E\,\sup_{s \in\,[0,t]}\left\vert\int_0^s \inner{\rho(t),\sigma_{g}(\rho(t))dw^{Q}(t)}_{H^{-1}}\right\vert\leq \frac 14 \E\,\sup_{s \in\,[0,t]}\Vert \rho(s)\Vert_{H^{-1}}^2+c,\]
if we integrate both sides in \eqref{sm-tris1} and then take the supremum with respect to time and the expectation, we get
\[\E\,\sup_{t \in\,[0,T]}\Vert \rho(t)\Vert^2_{H^{-1}}+\int_0^T \E\,\Vert \rho(s)\Vert_{H}^2\,ds\leq c_T\left(1+\Vert \mathfrak{r}_0\Vert_{H^{-1}}^2\right),\]
which, in particular implies that $\rho \in\,L^2(\Omega;L^\infty(0,T;H^{-1})\cap L^{2}(0,T;H)))$. Moreover, since $\rho$ solves equation \eqref{limit_para}, it belongs to $C([0,T];H)$, $\mathbb{P}$-a.s.

Finally, in order to prove   \eqref{lim_ene1}, we take the expectation of both sides of \eqref{sm-tris1} and we get
\[\frac d{dt}\E\,\Vert\rho(t)\Vert_{H^{-1}}^2+2\,c_1 \E\,\Vert\rho(s)\Vert_{H}^2\,ds\leq c\]
and  this implies that there exist some $c, \lambda>0$ such that \eqref{lim_ene1} holds.
\end{proof}


\section{Ergodic behavior of the limiting equation}

In what follows, we denote by $R^{H^{-1}}_{t}$ the transition semigroup associated to equation \eqref{limit_para} on $H^{-1}$
\begin{equation*}
	R^{H^{-1}}_{t}\varphi(\mathfrak{r}):=\E\varphi(\rho^{\mathfrak{r}}(t)),\ \ \ \ \ \ \ \mathfrak{r}\in H^{-1},\ \ \ \ \ \ \ t\geq0,
\end{equation*}
for every $\varphi\in B_{b}(H^{-1})$. 
Similarly, we denote by $R^{H}_{t}$ the transition semigroup associated to equation \eqref{limit_para} on $H$,
\begin{equation*}
	R^{H}_{t}\varphi(\mathfrak{r}):=\E\varphi(\rho^{\mathfrak{r}}(t)),\ \ \ \mathfrak{r}\in H,\ \ \ \ t\geq0,
\end{equation*}
for every $\varphi\in B_{b}(H)$. Clearly, if $\mathfrak{r}\in H$ and $\varphi\in B_{b}(H^{-1})$, then 
\begin{equation*}
	R_{t}^{H}\varphi(\mathfrak{r})=R^{H^{-1}}_{t}\varphi(\mathfrak{r}),\ \ \ \ \ \ t\geq 0.
\end{equation*}

For every $A \in\,\mathcal{B}(H^{-1})$ we have that $A\cap H \in\,\mathcal{B}(H)$. Thus,  if $\nu\in\mathcal{P}(H)$, we can define its extension $\nu'\in\mathcal{P}(H^{-1})$ by setting
\begin{equation*}
	\nu'(A)=\nu(A\cap H),\ \ \ \ \ \ \ \ \   A\in \mathcal{B}(H^{-1}).
\end{equation*}
With this definition, $\text{supp}\,(\nu')\subset H$. Indeed, if we denote by $B_{H}(\mathfrak{r},R)$ the closed ball in $H$ centered at $\mathfrak{r}\in H$ with radius $R>0$, then $B_{H}^{c}(\mathfrak{r},R)\in\mathcal{B}(H)$, so that
\begin{equation*}
	\lim_{R\to+\infty}\nu'(B_{H}^{c}(0,R))=\lim_{R\to+\infty}\nu(B_{H}^{c}(0,R))=0,
\end{equation*}
which implies that $\text{supp}\,(\nu')\subset H$.

\begin{Proposition}\label{lim_invariant}
 Assume Hypotheses \ref{Hypothesis1}, \ref{Hypothesis2}, \ref{Hypothesis3} and \ref{Hypothesis_control}, and define
 $\alpha(\mathfrak{r},\mathfrak{s}):=\abs{\mathfrak{r}-\mathfrak{s}}_{H^{-1}}$.
Then,  there   exist some positive constant $\lambda_{0},t_{0}$ and $c$ such that
\begin{equation}\label{contraction}
	\mathcal{W}_{\alpha}\left((R^{H^{-1}}_{t})^{\ast}\nu_{1},(R^{H^{-1}}_{t})^{\ast}\nu_{2}\right)\leq c\,e^{-\lambda_{0}t}\,\mathcal{W}_{\alpha}(\nu_{1},\nu_{2}),\ \ \ \ \ \   t>t_{0}.
\end{equation}
Moreover,  $R^{H^{-1}}_{t}$ has a unique invariant measure $\nu^{H^{-1}}$ such that $\text{\em{supp}}\,(\nu^{H^{-1}})\subset H^{1}$ and
\begin{equation}\label{speed}
	\mathcal{W}_{\alpha}\left((R^{H^{-1}}_{t})^\ast \delta_\mathfrak{r},\,\nu^{H^{-1}}\right)\leq c\left(1+\norm{\mathfrak{r}}_{H^{-1}}\right)\,e^{-\lambda_{0}t},\ \ \ \ \ \ t\geq0,\ \ \ \ \mathfrak{r}\in H^{-1}.
\end{equation}
\end{Proposition}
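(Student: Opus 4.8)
The plan is to derive the whole statement from a single mechanism: an exponential two-point contraction in $H^{-1}$ for the solutions of \eqref{limit_para}. First I would rerun the computation already carried out in the proof of Proposition \ref{weak_solution} for two genuine solutions $\rho^{\mathfrak{r}_1},\rho^{\mathfrak{r}_2}$ with arbitrary $\mathfrak{r}_1,\mathfrak{r}_2\in H^{-1}$, rather than for the Cauchy approximations. Exactly as in \eqref{sm30}, the monotonicity of $B$ tested in $H^{-1}$ together with the Lipschitz bounds on $F_g$ and $\sigma_g$ gives
\[
\frac{1}{2}\,d\norm{\rho^{\mathfrak{r}_1}(t)-\rho^{\mathfrak{r}_2}(t)}_{H^{-1}}^{2}\leq -c_0\,\norm{\rho^{\mathfrak{r}_1}(t)-\rho^{\mathfrak{r}_2}(t)}_{H}^{2}\,dt+dM(t),
\]
with $c_0>0$ thanks to Hypothesis \ref{Hypothesis_control} (condition \eqref{additional_condition}) and with $dM(t)$ the stochastic differential of a martingale. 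Inserting the spectral inequality $\norm{h}_{H}^{2}\geq \alpha_1\norm{h}_{H^{-1}}^{2}$, taking expectations and applying Gronwall, I obtain $\E\norm{\rho^{\mathfrak{r}_1}(t)-\rho^{\mathfrak{r}_2}(t)}_{H^{-1}}^{2}\leq e^{-\lambda t}\norm{\mathfrak{r}_1-\mathfrak{r}_2}_{H^{-1}}^{2}$ with $\lambda=2c_0\alpha_1$, and by Jensen the $L^1$ version $\E\norm{\rho^{\mathfrak{r}_1}(t)-\rho^{\mathfrak{r}_2}(t)}_{H^{-1}}\leq e^{-\lambda t/2}\norm{\mathfrak{r}_1-\mathfrak{r}_2}_{H^{-1}}$.

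For the Wasserstein contraction \eqref{contraction} I would use the dual formulation of $\mathcal{W}_\alpha$. For any $\varphi$ with $[\varphi]_{\text{Lip}_{H^{-1}}^{\alpha}}\leq 1$, the $L^1$ two-point bound shows that $R^{H^{-1}}_t\varphi$ is $H^{-1}$-Lipschitz with $[R^{H^{-1}}_t\varphi]_{\text{Lip}_{H^{-1}}^{\alpha}}\leq e^{-\lambda t/2}$, since $\lvert R^{H^{-1}}_t\varphi(\mathfrak{r}_1)-R^{H^{-1}}_t\varphi(\mathfrak{r}_2)\rvert\leq \E\norm{\rho^{\mathfrak{r}_1}(t)-\rho^{\mathfrak{r}_2}(t)}_{H^{-1}}$. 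Testing the definition of $\mathcal{W}_\alpha$ with the admissible function $e^{\lambda t/2}R^{H^{-1}}_t\varphi$ and taking the supremum over $\varphi$ yields \eqref{contraction}, in fact with $c=1$, $\lambda_0=\lambda/2$ and any $t_0\geq 0$; in particular this records \eqref{sm9_intro} and shows $R^{H^{-1}}_t$ is Feller.

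Existence and the support property I would obtain via Krylov--Bogoliubov started from $\mathfrak{r}_0=0\in H$. Using the $H^1$ bound in \eqref{lim_ene3} for the $H$-valued solution (which coincides with $\rho^{0}$ since $0\in H$), the time averages $\Gamma_t(0,\cdot):=t^{-1}\int_0^t (R^{H^{-1}}_s)^{\ast}\delta_0\,ds$ satisfy $\int\norm{\mathfrak{r}}_{H^1}^{2}\,\Gamma_t(0,d\mathfrak{r})=t^{-1}\int_0^t\E\norm{\rho^{0}(s)}_{H^1}^{2}\,ds\leq c$ uniformly in $t$. Because $H^1\hookrightarrow H^{-1}$ is compact, the sublevel sets of $\norm{\cdot}_{H^1}^{2}$ are compact in $H^{-1}$, so $\{\Gamma_t(0,\cdot)\}$ is tight; Prokhorov together with the Feller property produces an invariant measure $\nu^{H^{-1}}$. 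Lower semicontinuity of $\norm{\cdot}_{H^1}^{2}$ on $H^{-1}$ passes the uniform bound to the limit, giving $\int\norm{\mathfrak{r}}_{H^1}^{2}\,\nu^{H^{-1}}(d\mathfrak{r})<\infty$, hence $\text{supp}\,(\nu^{H^{-1}})\subset H^1$ and a finite first $H^{-1}$-moment.

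Finally, \eqref{speed} follows from $\nu^{H^{-1}}=(R^{H^{-1}}_t)^{\ast}\nu^{H^{-1}}$ and \eqref{contraction}, via
\[
\mathcal{W}_{\alpha}\left((R^{H^{-1}}_t)^{\ast}\delta_{\mathfrak{r}},\nu^{H^{-1}}\right)\leq c\,e^{-\lambda_0 t}\,\mathcal{W}_{\alpha}\left(\delta_{\mathfrak{r}},\nu^{H^{-1}}\right)\leq c\,e^{-\lambda_0 t}\left(\norm{\mathfrak{r}}_{H^{-1}}+\int_{H^{-1}}\norm{\mathfrak{s}}_{H^{-1}}\,\nu^{H^{-1}}(d\mathfrak{s})\right),
\]
the last factor being finite by the moment bound just established. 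For uniqueness, any invariant $\tilde{\nu}$ satisfies $\mathcal{W}_{\alpha}(\tilde{\nu},\nu^{H^{-1}})\leq c\,e^{-\lambda_0 t}\mathcal{W}_{\alpha}(\tilde{\nu},\nu^{H^{-1}})$ for all $t>t_0$, forcing this distance to vanish once it is known to be finite; finiteness holds because every invariant measure has finite $H^{-1}$-moment, which I would deduce by integrating the Lyapunov estimate \eqref{lim_ene1} against $\tilde{\nu}$ and a standard truncation. The hard part is really the first paragraph, namely checking that Hypothesis \ref{Hypothesis_control} produces a strictly positive $c_0$ after the monotone diffusion term absorbs the contributions of $F_g$ and $\sigma_g$ in the $H^{-1}$ pairing; once the two-point contraction is in hand, the remainder is a routine ergodic package, the only secondary technical nuisance being the moment finiteness for arbitrary invariant measures.
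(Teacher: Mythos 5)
Your proposal is correct and follows essentially the same route as the paper: the exponential two-point contraction in $H^{-1}$ obtained by rerunning the dissipativity computation of Proposition \ref{weak_solution} under Hypothesis \ref{Hypothesis_control}, the resulting Lipschitz decay of $R^{H^{-1}}_t\varphi$, Krylov--Bogoliubov with tightness from the uniform $H^1$ bound and the compact embedding $H^1\hookrightarrow H^{-1}$, and the moment bound on $\nu^{H^{-1}}$ to get \eqref{speed}. The only cosmetic difference is that you derive \eqref{contraction} directly from the Kantorovich--Rubinstein duality (yielding $c=1$, $t_0=0$), whereas the paper invokes \cite[Theorem 2.5]{HM}; both are valid.
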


\begin{proof}
	Let $\rho^{\mathfrak{r}_1},\rho^{\mathfrak{r}_2}$ be two solutions of \eqref{limit_para}, with initial conditions  $\mathfrak{r}_1, \mathfrak{r}_2 \in\,H^{-1}$, respectively. By proceeding as in   the proof of Proposition \ref{weak_solution}, we have
	\begin{equation*}
		\E\norm{\rho^{\mathfrak{r}_1}(t)-\rho^{\mathfrak{r}_1}(t)}_{H^{-1}}^{2}\leq e^{-\lambda t}\norm{\mathfrak{r}_1-\mathfrak{r}_2}_{H^{-1}}^{2},\ \ \ \ \ \ t\geq 0,
	\end{equation*}
	for some constant $\lambda>0$. 
	In particular,  the semigroup $R_t^{H^{-1}}$ is Feller in $H^{-1}$ and  for every $\varphi\in\text{Lip}_{b}(H^{-1})$ and $\mathfrak{r}_1, \mathfrak{r}_2\in H^{-1}$ 
	\begin{equation}\label{sm9}
		\left\lvert R_t^{H^{-1}}\varphi(\mathfrak{r}_1)-R_t^{H^{-1}}\varphi(\mathfrak{r}_2)\right\rvert\leq [\varphi]_{\text{Lip}_{H^{-1},\alpha}}e^{-\lambda t/2}\norm{\mathfrak{r}_1-\mathfrak{r}_2}_{H^{-1}},\ \ \ \ \ \ \ \ \   t\geq 0.
	\end{equation}
As shown e.g. in \cite[Theorem 2.5]{HM}, \eqref{sm9} implies \eqref{contraction}. Moreover, it implies that $R_t^{H^{-1}}$ has at most one invariant measure.	
	
If for every $R>0$ and $t>0$ we denote	\begin{equation*}
		B_{R}:=\left\{\mathfrak{r}\in H^{-1}:\norm{\mathfrak{r}}_{H^{1}}\leq R\right\},\ \ \ \ \ \ \ 
		\Gamma_{t}:=\frac{1}{t}\int_{0}^{t}(R_t^{H^{-1}})^\ast\,\delta_0\,dt.
	\end{equation*}
Then, thanks to  \eqref{lim_ene3}, for every $R>0$  and $t>0$ we have
	\begin{equation}\label{sm44}
		R_{t}(B_{R}^{c})=\frac{1}{t}\int_{0}^{t}\mathbb{P}\left(\norm{\rho^0(s)}_{H^{1}}> R\right)ds\leq \frac{c}{R^{2}}.
	\end{equation}
	Since $B_R$ is compactly embedded in $H^{-1}$, this implies   that the family of measures $\{\Gamma_t\}_{t>0}$, is tight in $H^{-1}$. Then, by  Prokhorov's Theorem,  there exists $t_n\uparrow \infty$ such that  $\Gamma_{t_n}$ converges weakly to some probability measure in $\mathcal{P}(H^{-1})$ which is invariant for $R^{H^{-1}}_t$ and, due to what we have seen above, such measure is the unique invariant measure  $\nu^{H^{-1}}$ of $R^{H^{-1}}_t$. Moreover, \eqref{sm44} gives
	\begin{equation*}
		\nu^{H^{-1}}(B_{R}^{c})\leq \liminf_{n\to\infty}\Gamma_{t_n}(B^c_R)\leq \frac{c}{R^{2}},\ \ \ \ \   R>0,
	\end{equation*}
	so that  $\text{supp}\,(\nu^{H^{-1}})\subset H^{1}$.
	
	Finally, in order to  prove \eqref{speed}, we first notice that due to the invariance of $\nu^{H^{-1}}$ and \eqref{lim_ene1}	\begin{equation*}\label{sm10}
	\begin{aligned}
	\int_{H^{-1}}&\Vert \mathfrak{r}\Vert^2_{H^{-1}}\,\nu^{H^{-1}}(d\mathfrak{r})	\leq \liminf_{R\to\infty}\int_{H^{-1}}\left(\Vert \mathfrak{r}\Vert^2_{H^{-1}}\wedge R\right)\,\nu^{H^{-1}}(d\mathfrak{r})\\[10pt]
	&=\liminf_{R\to\infty}\int_{H^{-1}}\left(\E\,\Vert \rho^{\mathfrak{r}}(t)\Vert^2_{H^{-1}}\wedge R\right)\,\nu^{H^{-1}}(d\mathfrak{r})\leq c\left(1+e^{-\la t}\int_{H^{-1}}\Vert \mathfrak{r}\Vert^2_{H^{-1}}\nu^{H^{-1}}(d\mathfrak{r})\right).
	\end{aligned}
	\end{equation*}
	Thus, if we take $\bar{t}>0$ such that $c e^{-\lambda \bar{t}}=1/2$, we get 
	\begin{equation}
	\label{sm45}
	\int_{H^{-1}}\Vert \mathfrak{r}\Vert^2_{H^{-1}}\,\nu^{H^{-1}}(d\mathfrak{r})	\leq c.	
	\end{equation}
	Then, in view of \eqref{sm9}, for every $\varphi \in\,\text{Lip}_b(H^{-1})$ we have
	\[\begin{aligned}
\mathcal{W}_{\alpha}&\left((R^{H^{-1}}_{t})^\ast \delta_\mathfrak{r},\,\nu^{H^{-1}}\right)	\leq \left\vert R^{H^{-1}}_t\varphi(\mathfrak{r})-\int_{H^{-1}}\varphi(\mathfrak{s})\,\nu^{H^{-1}}(d\mathfrak{s})\right\vert\\[10pt]
&\leq \int_{H^{-1}}\left\vert R^{H^{-1}}_t\varphi(\mathfrak{r})-R^{H^{-1}}_t\varphi(\mathfrak{s})\right\vert \nu^{H^{-1}}(d\mathfrak{s})\leq [\varphi]_{\tiny{\text{Lip}_{H^{-1}}, \alpha}}\, e^{-\lambda t/ 2}\int_{H^{-1}}\Vert \mathfrak{r}-\mathfrak{s}\Vert_{H^{-1}}\nu^{H^{-1}}(d\mathfrak{s}),
\end{aligned}\]
and \eqref{sm45} allows to obtain \eqref{speed}, with $\lambda_0=\lambda/2$.

\end{proof}

\begin{Remark}
{\em  Based on the fact that $\mathcal{B}(H)\subset \mathcal{B}(H^{-1})$ and the fact that $\text{supp}\,(\nu^{H^{-1}})\subset H^{1}$, we have that $\nu^{H^{-1}}\in\mathcal{P}(H^{-1})$ is also a probability measure on $H$. In what follows, it will be convenient to distinguish the restriction of $\nu^{H^{-1}}$ to $H$ from $\nu^{H^{-1}}$ itself and for this reason we will denote it by $\nu^{H}$.
 	}
\end{Remark}

\begin{Proposition}\label{invariant_H}
	The probability measure $\nu^{H}$ is the unique invariant measure for the transition semigroup $R_t^{H}$.	Moreover, $\text{{\em supp}}\,(\nu^{H})\subset H^{1}$ and
	\begin{equation}\label{uniform_limit}
		\int_{H}\norm{\mathfrak{r}}_{H^{1}}^{2} \nu^{H}(d\mathfrak{r})<\infty.
	\end{equation}
\end{Proposition}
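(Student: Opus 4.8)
The plan is to push all the ergodic information already obtained for $R^{H^{-1}}_t$ in Proposition \ref{lim_invariant} down to $R^H_t$, exploiting the identity $R^H_t\varphi(\mathfrak{r})=R^{H^{-1}}_t\varphi(\mathfrak{r})$ for $\mathfrak{r}\in H$ together with the fact that $\text{supp}(\nu^{H^{-1}})\subset H^1\subset H$. First I would prove the invariance of $\nu^H$ for $R^H_t$. Since a generic $\varphi\in C_b(H)$ is not defined on $H^{-1}$, I would reproduce the projection argument of Lemma \ref{sys_invariant_H1}: letting $\Pi_n$ be the orthogonal projection of $H^{-1}$ onto $\text{span}(e_1,\dots,e_n)$, each $\Pi_n$ maps $H^{-1}$ continuously into $H$ (in fact into $H^1$), so $\varphi_n:=\varphi\circ\Pi_n\in C_b(H^{-1})$ with $\norm{\varphi_n}_\infty\le\norm{\varphi}_\infty$ and $\varphi_n(\mathfrak{r})\to\varphi(\mathfrak{r})$ for every $\mathfrak{r}\in H$. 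Writing the invariance identity for $\nu^{H^{-1}}$ tested against $\varphi_n$, and using that $\rho^{\mathfrak{r}}(t)\in H$ almost surely whenever $\mathfrak{r}\in H^1$ (so that $R^{H^{-1}}_t\varphi_n=R^H_t\varphi_n$ on $\text{supp}(\nu^{H^{-1}})$), two applications of dominated convergence — one inside the semigroup, one for the outer integral — then give $\int_H R^H_t\varphi\,d\nu^H=\int_H\varphi\,d\nu^H$.

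For uniqueness I would argue in the opposite direction. Given any $\tilde\nu\in\mathcal{P}(H)$ invariant for $R^H_t$, its extension $\tilde\nu'$ to $H^{-1}$ is supported in $H$, and for every $\varphi\in C_b(H^{-1})$ the restriction $\varphi|_H\in C_b(H)$ satisfies $R^H_t(\varphi|_H)=R^{H^{-1}}_t\varphi$ on $H$; hence $\int_{H^{-1}}R^{H^{-1}}_t\varphi\,d\tilde\nu'=\int_H R^H_t(\varphi|_H)\,d\tilde\nu=\int_H\varphi|_H\,d\tilde\nu=\int_{H^{-1}}\varphi\,d\tilde\nu'$, so $\tilde\nu'$ is invariant for $R^{H^{-1}}_t$. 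The uniqueness of $\nu^{H^{-1}}$ guaranteed by \eqref{sm9} then forces $\tilde\nu'=\nu^{H^{-1}}$, and restricting back to $H$ yields $\tilde\nu=\nu^H$. The inclusion $\text{supp}(\nu^H)\subset H^1$ is immediate, since by definition $\nu^H$ is the restriction to $H$ of $\nu^{H^{-1}}$, whose support already lies in $H^1$.

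The main work, and the main obstacle, is the moment bound \eqref{uniform_limit}. The tail estimate $\nu^{H^{-1}}(B_R^c)\le c/R^2$ from \eqref{sm44} only gives $\text{supp}(\nu^H)\subset H^1$ and is too weak to control $\int_H\norm{\mathfrak{r}}_{H^1}^2\,d\nu^H$ (integrating the bound $c/R^2$ against $R^2\,dR$ diverges), so I would instead \emph{bootstrap} the two regularizing energy estimates against the stationary measure, using the same averaging-in-time mechanism that produced \eqref{sm45}: integrate a pathwise energy inequality against $\nu^H$, use invariance to replace the time-averaged quantity by its stationary value, and let $t\to\infty$. Concretely, integrating the second estimate in \eqref{lim_ene1} against $\nu^H$ and using $\int_H\norm{\mathfrak{r}}_{H^{-1}}^2\,d\nu^H=\int_{H^{-1}}\norm{\mathfrak{r}}_{H^{-1}}^2\,d\nu^{H^{-1}}\le c$ from \eqref{sm45}, together with the invariance of $\nu^H$, I would obtain $t\int_H\norm{\mathfrak{r}}_H^2\,d\nu^H\le c\,(t+1)$; dividing by $t$ and sending $t\to\infty$ yields $\int_H\norm{\mathfrak{r}}_H^2\,d\nu^H<\infty$. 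With this finite $H$-moment in hand, the identical scheme applied to the second estimate in \eqref{lim_ene3} gives $t\int_H\norm{\mathfrak{r}}_{H^1}^2\,d\nu^H\le c\,\bigl(t+\int_H\norm{\mathfrak{r}}_H^2\,d\nu^H\bigr)$, whence \eqref{uniform_limit} after letting $t\to\infty$.

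The delicate point in these two moment computations is that the functionals $\norm{\cdot}_H^2$ and $\norm{\cdot}_{H^1}^2$ are unbounded, so the invariance identities cannot be applied to them directly. I would handle this by truncating at level $R$ and applying invariance to the bounded functionals $\norm{\cdot}_H^2\wedge R$ and $\norm{\cdot}_{H^1}^2\wedge R$, which are genuinely in $C_b(H)$ because these norms are continuous on $H$ (working on $H$ with $\nu^H$, rather than on $H^{-1}$, precisely avoids the lower-semicontinuity obstruction). Passing $R\to\infty$ by monotone convergence then recovers the stationary value, Fubini–Tonelli for nonnegative integrands justifies exchanging the time integral with the integral against $\nu^H$, and the finiteness of the $H$-moment guarantees that the stochastic integrals appearing in the underlying It\^o computations are true martingales with vanishing expectation under the stationary measure.
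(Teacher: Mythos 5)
Your proof is correct, and the first two parts (invariance of $\nu^H$ via the finite-dimensional projections $\varphi_n=\varphi\circ\Pi_n$, and uniqueness by extending a putative invariant $\tilde\nu$ to $H^{-1}$ and invoking the uniqueness of $\nu^{H^{-1}}$) coincide with what the paper does. Where you genuinely diverge is the moment bound \eqref{uniform_limit}: the paper applies the Kolmogorov operator $N$ of $R^H_t$ to the Lyapunov function $\varphi(\mathfrak{r})=\norm{\mathfrak{r}}_H^2/2$, computes $N\varphi(\mathfrak{r})\leq \tfrac12\sigma_\infty^2-\gamma_1^{-1}\norm{\nabla\mathfrak{r}}_H^2+\inner{F_g(\mathfrak{r}),\mathfrak{r}}_H$, and uses $\int_H N\varphi\,d\nu^H=0$ together with \eqref{Fg2} and \eqref{L_F_condition} to absorb the nonlinear term and extract the $H^1$-moment in one step. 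You instead bootstrap through the time-averaged energy estimates \eqref{lim_ene1} and \eqref{lim_ene3}, first getting the $H$-moment from the $H^{-1}$-moment \eqref{sm45} and then the $H^1$-moment from the $H$-moment, each time by testing the invariance identity against a truncation and letting the truncation level go to infinity. Your route is more elementary and arguably more careful about integrability (the paper's identity $\int N\varphi\,d\nu^H=0$ for an unbounded $\varphi$ itself requires exactly the kind of truncation you spell out), at the cost of two passes instead of one; the paper's route is shorter and makes the dissipation mechanism more transparent. One small correction to your last paragraph: $\norm{\cdot}_{H^1}^2\wedge R$ is \emph{not} in $C_b(H)$, since $\norm{\cdot}_{H^1}^2$ is only lower semicontinuous on $H$ (it is $+\infty$ off $H^1$ and a supremum of the continuous functions $\sum_{i\leq n}\alpha_i\inner{\cdot,e_i}_H^2$); this does not damage the argument, because the invariance identity extends from $C_b(H)$ to bounded Borel functions, or equivalently you can truncate the sum at level $n$, apply invariance to the resulting genuinely continuous bounded functional, and pass to the limit in $n$ and then in $R$ by monotone convergence.
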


\begin{proof}
	By proceeding as in the proof of Lemma \ref{sys_invariant_H1}, it is possible to show that $\nu^{H}$ is invariant for $R_{t}^{H}$, and from Proposition \ref{lim_invariant} we get that  $\text{supp}(\nu^{H})\subset H^{1}$.
		
	To prove its uniqueness, we notice that if $\nu\in \mathcal{P}(H)$ is any invariant measure for $R_{t}^{H}$, then its extension $\nu'\in\mathcal{P}(H^{-1})$, with the support in $H$, is invariant for $R_t^{H^{-1}}$. From Proposition \ref{lim_invariant}, we have $\nu'=\nu^{H^{-1}}$, and hence 
	\begin{equation*}
\nu(A)=\nu'(A)=\nu^{H^{-1}}(A)=\nu^{H}(A),\ \ \ \ \ \   A\in\mathcal{B}(H),
	\end{equation*}
	which implies that $\nu=\nu^{H}$.
	
	Finally, 
	in order to prove \eqref{uniform_limit}, we consider the Komolgov operator associated to $R_{t}^{H}$
	\begin{equation*}
		N\varphi(\mathfrak{r})
		=\frac{1}{2}\text{Tr}_{H}\Big[\big(\sigma_{g}(\mathfrak{r})Q\big)\big(\sigma_{g}(\mathfrak{r})Q\big)^{\ast}D^{2}\varphi(\mathfrak{r})\Big]+\Inner{\text{div}\big(b(\mathfrak{r})\nabla \mathfrak{r}\big)+F_{g}(\mathfrak{r}),D\varphi(\mathfrak{r}) }_{H},
	\end{equation*}
	We consider the function $\varphi(\mathfrak{r}):=\norm{\mathfrak{r}}_{H}^{2}/2$,
	then 
	\begin{equation*}
		\begin{array}{ll}
			\ds{N\varphi(\mathfrak{r}) }
			\ds{=\frac{1}{2}\norm{\sigma_{g}(\mathfrak{r})}_{\mathcal{L}_{2}(H_{Q},H)}^{2}-\Inner{ b(\mathfrak{r})\nabla \mathfrak{r}, \nabla \mathfrak{r}}_{H}+\Inner{F_{g}(\mathfrak{r}),\mathfrak{r}}_{H}\leq \frac{1}{2}\sigma_{\infty}^{2}-\gamma_{1}^{-1}\norm{\nabla \mathfrak{r}}_{H}^{2}+\Inner{F_{g}(\mathfrak{r}),\mathfrak{r}}_{H} },
		\end{array}
	\end{equation*}
	so thanks to \eqref{Fg2} and \eqref{L_F_condition}, by the invariance of $\nu^{H}$ on $H$ we have
	\begin{equation*}
		\int_{H}\norm{\mathfrak{r}}_{H^{1}}^{2}\nu^{H}(d\mathfrak{r})<\infty.
	\end{equation*}
\end{proof}

\begin{Remark}
	{\em As a direct consequence of \eqref{uniform_limit}, we have 
	\begin{equation}\label{uniform_limit1}
		\int_{H^{-1}}\norm{\mathfrak{r}}_{H^{1}}^{2}\nu^{H^{-1}}(d\mathfrak{r})<\infty.
	\end{equation}}
\end{Remark}


\section{Proof of Theorem \ref{main-teo}}

 Due to Hypothesis \ref{Hypothesis2}, with an abuse of notation in this section we will look at  $g$ and $g^{-1}$ as mappings on $H$
\begin{equation*}
	[g(h)](x):=g(h(x)),\ \ \ \ \ [g^{-1}(h)](x):=g^{-1}(h(x)),\ \ \ x\in\mathcal{O},\ \ \ \ \ \ \ h\in H.
\end{equation*} 
For every probability measure $\nu\in\mathcal{P}(H)$, we define probability measures $\nu\circ g$ and  $\nu\circ g^{-1}$ $\in \mathcal{P}(H)$ by
\begin{equation*}
	\big(\nu\circ g\big)(A):=\nu(g(A)),\ \ \ \big(\nu\circ g^{-1}\big)(A):=\nu(g^{-1}(A)),\ \ \   A\in \mathcal{B}(H).
\end{equation*}
Clearly, we have
\begin{equation}\label{coincide2}
	(\nu\circ g)\circ g^{-1}=(\nu\circ g^{-1})\circ g=\nu.
\end{equation}

Now, we denote by ${P}^{H}_{t}$ the transition semigroup associated to the limiting problem \eqref{limiting_problem}
\begin{equation*}
	P^H_t\varphi(\mathfrak{u}):=\E\,\varphi(u^{\mathfrak{u}}(t)),\ \ \ \ \ \ \ \mathfrak{u}\in H,\ \ \ \ \ \ t\geq0,
\end{equation*}
for every $\varphi\in B_{b}(H)$. For every $\mathfrak{r}, \mathfrak{u} \in\,H$ and $t\geq 0$ we have 
\[g^{-1}(\rho^{\mathfrak{r}}(t))=u^{g^{-1}(\mathfrak{r})}(t),\ \ \ \ \ \ \ \ \ \ \rho^{g(\mathfrak{u})}(t)=u^{\mathfrak{u}}(t).\]
Hence,
if we define the operator $T_{g}:C_{b}(H)\to C_{b}(H)$ by
\[
	[T_{g}\varphi](\mathfrak{u})=\varphi(g(\mathfrak{u})),\ \ \ \mathfrak{u}\in H,
\]
we have $T_{g}^{-1}:=T_{g^{-1}}$, 
\begin{equation}
\label{sm15}
\int_{H} [T_g\varphi	](\mathfrak{u})\,(\nu\circ g)(d\mathfrak{u})=\int_H\varphi(\mathfrak{r})\,\nu(d\mathfrak{r}),
\end{equation}
and for every $\varphi \in\,C_b(H)$
\begin{equation}
\label{connection}	
R^H_t\varphi(\mathfrak{r})=\E\,\varphi(\rho^{\mathfrak{r}}(t))=\E\,[T_{g}\varphi](u^{g^{-1}(\mathfrak{r})}(t))=P^H_t[T_{g}\varphi](g^{-1}(\mathfrak{r})),\ \ \ \ \ \   t\geq0.
\end{equation}

\begin{Lemma}\label{l1sm}
$\nu \in\,\mathcal{P}(H)$  is invariant for $P^H_t$ if and only $\nu\circ g^{-1}$ is invariant for $R^H_t$.	 In particular, $\nu^H\circ g$ is the  unique invariant measure for $P^H_t$.
\end{Lemma}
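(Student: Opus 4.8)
The plan is to reduce the statement to a single algebraic identity between the two semigroups and then transport it across the measures $\nu$ and $\nu\circ g^{-1}$ by means of the change-of-variables identity \eqref{sm15}. The guiding observation is that \eqref{connection} exhibits $R^H_t$ as the conjugate of $P^H_t$ by the invertible operator $T_g$: for every $\varphi\in C_b(H)$ one has $R^H_t\varphi = T_{g^{-1}}\bigl(P^H_t(T_g\varphi)\bigr)$, using $T_g^{-1}=T_{g^{-1}}$. Since invariance of a probability measure for these Feller semigroups may be tested against the class $C_b(H)$, and since $T_g:C_b(H)\to C_b(H)$ is a bijection, the whole argument will come down to rewriting the defining integral identities and reindexing the class of test functions through $T_g$.

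First I would record the transfer identity
\[
\int_H \varphi\,d(\nu\circ g^{-1}) = \int_H T_g\varphi\,d\nu,\qquad \varphi\in C_b(H),
\]
which follows from \eqref{sm15} applied to the measure $\nu\circ g^{-1}$ together with $(\nu\circ g^{-1})\circ g=\nu$ from \eqref{coincide2}; substituting $\varphi = T_{g^{-1}}\phi$ into it also yields $\int_H T_{g^{-1}}\phi\,d(\nu\circ g^{-1}) = \int_H \phi\,d\nu$. Next, for a fixed $\psi\in C_b(H)$ I would use the conjugation relation to compute
\[
\int_H R^H_t\psi\,d(\nu\circ g^{-1}) = \int_H T_{g^{-1}}\bigl(P^H_t(T_g\psi)\bigr)\,d(\nu\circ g^{-1}) = \int_H P^H_t(T_g\psi)\,d\nu,
\]
while the transfer identity directly gives $\int_H \psi\,d(\nu\circ g^{-1}) = \int_H T_g\psi\,d\nu$. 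Hence $\nu\circ g^{-1}$ is invariant for $R^H_t$ if and only if $\int_H P^H_t(T_g\psi)\,d\nu = \int_H T_g\psi\,d\nu$ for every $\psi\in C_b(H)$. Because $T_g$ is a bijection of $C_b(H)$, letting $\varphi=T_g\psi$ range over all of $C_b(H)$ shows this is equivalent to $\int_H P^H_t\varphi\,d\nu = \int_H \varphi\,d\nu$ for every $\varphi\in C_b(H)$, that is, to the invariance of $\nu$ for $P^H_t$. This settles the equivalence in both directions simultaneously.

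For the final assertion, Proposition \ref{invariant_H} furnishes that $\nu^H$ is the unique invariant measure for $R^H_t$. By the equivalence just established, $\nu$ is invariant for $P^H_t$ exactly when $\nu\circ g^{-1}$ is invariant for $R^H_t$, i.e. exactly when $\nu\circ g^{-1}=\nu^H$; applying $\circ\,g$ and invoking \eqref{coincide2} this is the same as $\nu=\nu^H\circ g$, so $\nu^H\circ g$ is the unique invariant measure for $P^H_t$. I expect the only point requiring genuine care to be the justification that invariance may be tested on $C_b(H)$ alone and that $T_g$ indeed preserves this space bijectively; both are consequences of the Feller property of the semigroups and of $g$ being a homeomorphism of $H$ (Hypothesis \ref{Hypothesis2}), after which everything reduces to bookkeeping with the pushforward identities \eqref{sm15} and \eqref{coincide2}.
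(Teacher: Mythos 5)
Your proposal is correct and follows essentially the same route as the paper: both rest on the conjugation identity \eqref{connection} together with the change-of-variables relation \eqref{sm15} to convert $\int R^H_t\varphi\,d(\nu\circ g^{-1})$ into $\int P^H_t[T_g\varphi]\,d\nu$, and then transfer uniqueness from $R^H_t$ to $P^H_t$ via \eqref{coincide2}. The only difference is presentational — you obtain both implications at once by running an equivalence chain over the bijection $T_g$ of $C_b(H)$, whereas the paper proves one direction and invokes symmetry for the other.
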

\begin{proof}
Assume $\nu \in\,\mathcal{P}(H)$ is invariant for $P^H_t$. Then, thanks to \eqref{connection} and \eqref{sm15}, for every $\varphi \in\,C_b(H)$ and $t\geq 0$ we have
\[\begin{aligned}
\int_H& R^H_t\varphi(\mathfrak{r})\,(\nu\circ g^{-1})(d\mathfrak{r})=	\int_H R^H_t\varphi(g(\mathfrak{u}))\,\nu(d\mathfrak{u})=	\int_H P^H_t[T_g\varphi](\mathfrak{u})\,\nu(d\mathfrak{u})\\[10pt]
&\hsp=\int_H [T_g\varphi](\mathfrak{u})\,\nu(d\mathfrak{u})=\int_H\varphi(\mathfrak{r})\,(\nu\circ g^{-1})(d\mathfrak{r}).
\end{aligned}\]
This implies that $\nu\circ g^{-1}$ is invariant for $R^H_t$. In the same way, if $\lambda \in\,\mathcal{P}(H)$ is invariant for $R^H_t$, then $\lambda\circ g$ is invariant for $P^H_t$. Hence, we can conclude due to \eqref{coincide2}. 

Our statement can be rephrased by saying that  there exists a unique invariant measure for $R^H_t$ if and only is there exists a unique invariant measure for $P^H_t$. Therefore, since we have shown in Corollary \ref{invariant_H} that $\nu^H$ is the unique invariant measure for $R^H_t$, we obtain that $\nu^H\circ g$ is the unique invariant measure for $P^H_t$.

 	\end{proof}

Now, we can conclude that Theorem \ref{main-teo} holds, once we prove that if
 $(\nu_{\mu}^{ \mathcal{H}})_{\mu>0}\subset\mathcal{P}(\H)$ is a family of invariant measures  for the transition semigroups $ P_t^{\mu, \mathcal{H}}$, such that $\text{supp}(\nu_{\mu}^{ \mathcal{H}})\subset \H_{1}$,  then  we have
	\begin{equation}\label{main_convergence}
		\lim_{\mu\to0}\mathcal{W}_{\alpha}\left(\left[\left(\Pi_{1}\nu_{\mu}^{ \mathcal{H}}\right)\circ g^{-1}\right]',\nu^{H^{-1}}\right)=0,
	\end{equation}
	where  $\nu^{H^{-1}}$ is the unique invariant measure for $R_t^{H^{-1}}$ in $H^{-1}$.

 Actually, 
in view of \eqref{uniform_invariant},  the family of probability measures $(\Pi_{1}\nu_{\mu}^{ \mathcal{H}})_{\mu\in(0,1)}$ is tight in $H^{\delta}$, for every $\delta<1$. If $\nu$ is any weak limit of $\Pi_{1}\nu_{\mu}^{ \mathcal{H}}$ in $H$,  as $\mu\to0$, we have $(\Pi_{1}\nu_{\mu}^{ \mathcal{H}})\circ g^{-1}$ converges weakly to $\nu\circ g^{-1}$ on $H$. Due to the continuity of the embedding of $H^{-1}$ into $H$,
\begin{equation*}
	\Big[(\Pi_{1}\nu_{\mu}^{ \mathcal{H}})\circ g^{-1}\Big]'\rightharpoonup\big(\nu\circ g^{-1}\big)',\ \ \ \ \ \text{as}\ \mu\to 0, 
\end{equation*} 
as measures on $H^{-1}$. On the other hand, according to \eqref{main_convergence} we have that $\left[(\Pi_{1}\nu_{\mu}^{ \mathcal{H}})\circ g^{-1}\right]'$ converges weakly to $\nu^{H^{-1}}$ in $H^{-1}$, so that $(\nu\circ g^{-1})'=\nu^{H^{-1}}$ in $H^{-1}$. This implies that $\nu\circ g^{-1}=\nu^{H}\in\mathcal{P}(H)$, and thus $\nu=\nu^{H}\circ g\in\mathcal{P}(H)$. Since this holds for every weak limit $\nu$ of $\Pi_{1}\nu_{\mu}^{ \mathcal{H}}$, we conclude that $\Pi_{1}\nu_{\mu}^{ \mathcal{H}}$ converges weakly to $\nu^{H}\circ g$ in $H$, as $\mu\to0$, and, due to Lemma \ref{l1sm},  $\nu^{H}\circ g$ is the unique invariant measure for $P_{t}^{H}$.

\subsection{Proof of \eqref{main_convergence}}
Due to the invariance of $\nu_\mu^{\mathcal{H}}$ and $\nu^{H^{-1}}$, we have
\[\begin{aligned}
\mathcal{W}_{\alpha}&\left(\left[\left(\Pi_{1}\nu_{\mu}^{ \mathcal{H}}\right)\circ g^{-1}\right]',\nu^{H^{-1}}\right)\leq \mathcal{W}_{\alpha}\left(\left[\Pi_1((P_t^{\mu, \mathcal{H}})^\star \nu_\mu^{\mathcal{H}})\circ g^{-1}\right]^\prime,(R^{H^{-1}}_{t})^{\ast}\left[\left(\Pi_{1}\nu_{\mu}^{ \mathcal{H}}\right)\circ g^{-1}\right]'\r)\\[10pt]
&\hsp+\mathcal{W}_{\alpha}\left(
(R^{H^{-1}}_{t})^{\ast}\left[\left(\Pi_{1}\nu_{\mu}^{ \mathcal{H}}\right)\circ g^{-1}\right]',(R^{H^{-1}}_{t})^{\ast}\nu^{H^{-1}}
\right).	
\end{aligned}
\]
According to \eqref{contraction}, we have
\[\mathcal{W}_{\alpha}\left(
(R^{H^{-1}}_{t})^{\ast}\left[\left(\Pi_{1}\nu_{\mu}^{ \mathcal{H}}\right)\circ g^{-1}\right]',(R^{H^{-1}}_{t})^{\ast}\nu^{H^{-1}}
\right)\leq c\, e^{-\lambda_0 t}\mathcal{W}_{\alpha}\left(
\left[\left(\Pi_{1}\nu_{\mu}^{ \mathcal{H}}\right)\circ g^{-1}\right]',\nu^{H^{-1}}
\right),\]
and then, if we pick $\bar{t}>0$ such that $c e^{-\lambda_0 \bar{t}}\leq 1/2$, we obtain
\[\mathcal{W}_{\alpha}\left(\left[\left(\Pi_{1}\nu_{\mu}^{ \mathcal{H}}\right)\circ g^{-1}\right]',\nu^{H^{-1}}\right)\leq 2\,\mathcal{W}_{\alpha}\le(\left[(\Pi_1((P_{\bar{t}}^{\mu, \mathcal{H}})^\star \nu_\mu^{\mathcal{H}})\circ g^{-1}\right]^\prime,(R^{H^{-1}}_{\bar{t}})^{\ast}\left[\left(\Pi_{1}\nu_{\mu}^{ \mathcal{H}}\right)\circ g^{-1}\right]'\r).\]
Now, if we fix a $\mathcal{F}_{0}$-measurable $\mathcal{H}_{1}$-valued random variable $\vartheta_\mu:=(\xi_\mu,\eta_\mu)$, distributed as the invariant measure $\nu_{\mu}^{ \mathcal{H}}$, the Kantorovich-Rubinstein identity \eqref{KR-id} 
gives for every $t\geq 0$
\[\mathcal{W}_{\alpha}\le(\left[(\Pi_1((P_t^{\mu, \mathcal{H}})^\star \nu_\mu^{\mathcal{H}})\circ g^{-1}\right]^\prime,(R^{H^{-1}}_{t})^{\ast}\left[\left(\Pi_{1}\nu_{\mu}^{ \mathcal{H}}\right)\circ g^{-1}\right]'\r)\leq \mathbb{E}\,\alpha (g(u_\mu^{\vartheta_\mu}(t)), \rho^{\,g(\xi_\mu)}(t)).\] 
Thus,   \eqref{main_convergence} follows once we prove that for every $t\geq 0$ large enough
	\begin{equation}
	\label{6.7}
\lim_{\mu\to 0}\,\mathbb{E}\,\alpha (g(u_\mu^{\vartheta_\mu}(t)), \rho^{\,g(\xi_\mu)}(t))=\lim_{\mu\to 0}\E\,\Vert g(u_\mu^{\vartheta_\mu}(t))-\rho^{\,g(\xi_\mu)}(t)\Vert_{H^{-1}}=0.
	\end{equation}
	
According to \eqref{uniform_invariant} we have that $\vartheta_\mu\in L^{2}(\Omega;\H_{1})$, for every $\mu\in(0,1)$. Hence, if we denote $\rho_\mu(t):=g(u_\mu^{\vartheta_\mu}(t))$, by proceeding  as in \cite[Section 5]{cerraixi}, we can rewrite equation  \eqref{system} in the following way 
\begin{equation*}
	\begin{array}{ll}
		\ds{\rho_\mu(t)+\mu\, v_{\mu}^{\vartheta_\mu}(t)}
		\ds{=g(\xi_{\mu})+\mu\, \eta_{\mu}+\int_{0}^{t}\Delta [B(\rho_\mu(s))]ds+\int_{0}^{t}F_{g}(\rho_\mu(s))ds+\int_{0}^{t}\sigma_{g}(\rho_\mu(s))dw^{Q}(s) },
	\end{array}
\end{equation*}
where the identity holds in $H^{-1}$ sense.
Since $\rho^{\,g(\xi_\mu)}$ solves equation \eqref{limit_para} with initial condition $g(\xi_\mu)\in L^{2}(\Omega;H)$ in $H^{-1}$ sense, we have 
\begin{equation*}
	\begin{array}{l}
	\ds{\rho_\mu(t)-\rho^{\,g(\xi_{\mu})}(t)+\mu\, v_{\mu}^{\vartheta_\mu}(t)=\mu\, \eta_{\mu}+\int_{0}^{t}\Delta\left[B(\rho_\mu(s))-B(\rho^{\,g(\xi_{\mu})}(s))\right]\,ds}\\[14pt]
	\ds{ \hsp+\int_{0}^{t}\left(F_{g}(\rho_\mu(s))-F_{g}(\rho^{\,g(\xi_{\mu})}(s))\right)ds+\int_{0}^{t}\left(\sigma_{g}(\rho_\mu(s))-\sigma_{g}(\rho^{\,g(\xi_{\mu})}(s))\right)dw^{Q}(s) }.
	\end{array}
\end{equation*}
If we define $\vartheta_\mu(t):=\rho_\mu(t)-\rho^{\,g(\xi_{\mu})}(t)$, as a consequence of  It\^{o}'s formula, we have
\begin{equation*}
	\begin{array}{l}
		\ds{\frac 12\,\mathbb{E}\,\Vert\vartheta_\mu(t)+\mu\, v_{\mu}^{\vartheta_\mu}(t)\Vert_{H^{-1}}^{2}}\\[14pt]
		\ds{\hslp=\frac 12\,\mu^{2}\,\mathbb{E}\,\Vert\eta_{\mu}\Vert_{H^{-1}}^{2}-\mathbb{E}\,\int_{0}^{t}\Inner{B\big(\rho_\mu(s)\big)-B\big(\rho^{\,g(\xi_{\mu})}(s)\big),\vartheta_\mu(s)+\mu\,v_{\mu}^{\vartheta_\mu}(s) }_{H}ds}\\[14pt]
		\ds{\hsp +\mathbb{E}\,\int_{0}^{t}\Inner{F_{g}\big(\rho_\mu(s)\big)-F_{g}\big(\rho^{\,g(\xi_\mu)}(s)\big),\vartheta_\mu(s)+\mu\,v_{\mu}^{\vartheta_\mu}(s) }_{H^{-1}}ds}\\[14pt]
		\ds{\hsp +\frac 12 \,\mathbb{E}\,\int_{0}^{t}\Vert\sigma_{g}(\rho_\mu(s))-\sigma_{g}(\rho^{\,g(\xi_\mu)}(s))\Vert_{\mathcal{L}_{2}(H_{Q},H^{-1})}^{2}ds}
		\end{array}\end{equation*}
		so that
\begin{equation*}
	\begin{array}{l}
		\ds{\E\,\Vert\vartheta_\mu(t)+\mu\, v_{\mu}^{\vartheta_\mu}(t)\Vert_{H^{-1}}^{2} \leq \mu^{2}\E\norm{\eta_\mu}_{H^{-1}}^{2}-2\mu\,\E\int_{0}^{t}\Inner{B\big(\rho_\mu(s)\big)-B\big(\rho^{\,g(\xi_\mu)}(s)\big),v_{\mu}^{\vartheta_\mu}(s) }_{H}ds }\\[14pt]
		\ds{ +2\mu\,\E\int_{0}^{t}\Inner{F_{g}\big(\rho_\mu(s)\big)-F_{g}\big(\rho^{\,g(\xi_\mu)}(s)\big),v_{\mu}^{\vartheta_\mu}(s) }_{H^{-1}}ds-c_0\,\E\int_{0}^{t}\Vert \vartheta_\mu(s)\Vert_{H}^{2}ds,}
	\end{array}
\end{equation*}
where
\[c_0:=2\Big(\frac{1}{\gamma_{1}}-\frac{L_\sigma}{2\alpha_{1}\gamma_{0}^{2}}-\frac{L_f}{\alpha_{1}\gamma_{0}}\Big)>0.\]
Since $B$ has linear growth, thanks to \eqref{sys_ene1} and \eqref{sys_ene2}  for every $\mu\in(0,\mu_{0})$ we have
\begin{equation*}
	\begin{array}{ll}
		&\ds{\mu\cdot\E\left\lvert\int_{0}^{t}\Inner{B(\rho_\mu(s)\big)-B\big(\rho^{\,g(\xi_\mu)}(s)),v_{\mu}^{\vartheta_\mu}(s) }_{H}ds\right\rvert}\\
		\vs
		&\ds{\leq c\left(\int_{0}^{t}\left(1+\E\,\norm{\rho_\mu(t)}_{H}^{2}+\E\,\Vert\rho^{\,g(\xi_\mu)}(t)\Vert_{H}^{2}\right)dt\right)^{\frac{1}{2}}\left(\int_{0}^{t}\mu^{2}\E\,\Vert v_{\mu}^{\vartheta_\mu}(t)\Vert_{H}^{2}dt\right)^{\frac{1}{2}} }\\
		\vs
		&\ds{ \leq c\left(1+t+\E\norm{\xi_\mu}_{H^{1}}^{2}+\mu^{2}\E\Vert \eta_\mu\Vert_{H}^{2}\right)^{\frac{1}{2}}\left(\mu\,t+\mu^{2}+\mu^{2}\E\Vert\xi_\mu\Vert_{H^{1}}^{2}+\mu^{3}\E\norm{\eta_\mu}_{H}^{2}\right)^{\frac{1}{2}} }\\
		\vs
		&\ds{\leq c_{t}\left(1+\int_{\mathcal{H}}\left(\norm{u}_{H^{1}}^{2}+\mu^{2}\norm{v}_{H}^{2}\right)\nu_{\mu}^{ \mathcal{H}}(du,dv)\right)^{\frac{1}{2}}\left(\mu+\mu^{2}\int_{\mathcal{H}}\left(\norm{u}_{H^{1}}^{2}+\mu\norm{v}_{H}^{2}\right)\nu_{\mu}^{ \mathcal{H}}(du,dv)\right)^{\frac{1}{2}}}\\
		\vs
		&\ds{\hsp\leq c_{t}\sqrt{\mu}\left(1+\int_{\mathcal{H}}\left(\norm{u}_{H^{1}}^{2}+\mu\norm{v}_{H}^{2}\right)\nu_{\mu}^{ \mathcal{H}}(du,dv)\right)}.
	\end{array}
\end{equation*}
Similarly, thanks to the linear growth of $F_{g}$, we have for every $\mu\in(0,\mu_{0})$
\begin{equation*}
\begin{aligned}
\mu\cdot\E&\left\lvert\int_{0}^{t}\Inner{F_{g}(\rho_\mu(s)\big)-F_{g}\big(\rho^{\,g(\xi_\mu)}(s)),v_{\mu}^{\vartheta_\mu}(s) }_{H^{-1}}ds\right\rvert\\[10pt]
&\hsp\leq c_{t}\sqrt{\mu}\left(1+\int_{\mathcal{H}}\left(\norm{u}_{H^{1}}^{2}+\mu\norm{v}_{H}^{2}\right)\nu_{\mu}^{ \mathcal{H}}(du,dv)\right).	
\end{aligned}
\end{equation*}
Moreover, due to \eqref{uniform_invariant} we know the family of random variable $\vartheta_\mu$ satisfies \eqref{initial_condition}. Then, from \eqref{sys_ene3} we obtain that for every $\mu\in(0,\mu_{t})$
\begin{equation*}
	\begin{array}{l}
	\ds{\mu^{2}\E\,\Vert v_{\mu}^{\vartheta_\mu}(t)\Vert_{H^{-1}}^{2}\leq c_{t}\sqrt{\mu}+c\,\mu\left(\E\norm{\xi_\mu}_{H^{1}}^{2}+\mu\,\E\norm{\eta_\mu}_{H}^{2}\right)}\\[14pt]
	\ds{\hsp \hsp=c_{t}\sqrt{\mu}+c\,\mu\int_{\mathcal{H}}\left(\norm{u}_{H^{1}}^{2}+\mu\norm{v}_{H}^{2}\right)\nu_{\mu}^{ \mathcal{H}}(du,dv),}
	\end{array}
\end{equation*}

Therefore, from \eqref{uniform_invariant} and \eqref{additional_condition}  we conclude that for every $\mu \in\,(0,\mu_t)$
\begin{equation*}
	\begin{array}{l}
	\ds{\frac{1}{2}\,\E\,\Vert\rho_\mu(t)-\rho^{\,g(\xi_\mu)}(t)\Vert_{H^{-1}}^{2}\leq \left(\E\,\Vert\rho_\mu(t)-\rho^{\,g(\xi_\mu)}(t)+\mu v_{\mu}^{\vartheta_\mu}(t)\Vert_{H^{-1}}^{2}+\mu^{2}\E\,\Vert v_{\mu}^{\vartheta_\mu}(t)\Vert_{H^{-1}}^{2}\right)}\\[14pt]	\ds{\hsp\leq c_{t}\sqrt{\mu}\left(1+\int_{\mathcal{H}}\left(\norm{u}_{H^{1}}^{2}+\mu\norm{v}_{H}^{2}\right)\nu_{\mu}^{ \mathcal{H}}(du,dv)\right),}
	\end{array}
\end{equation*}
and \eqref{6.7} follows.


\appendix

\section{Well-posedness of  equation \eqref{limit_para} in $H$} \label{A}

Throughout this section we will not need to assume condition \eqref{L_F_condition} in Hypothesis \ref{Hypothesis3}. Namely, we will just assume 
that	the mapping $f:\mathcal{O}\times\mathbb{R}\to\mathbb{R}$ is measurable, with
	\begin{equation}\label{smapp1}
	\sup_{x\in\mathcal{O}}\abs{f(x,0)}<\infty,\ \ \ \ \ \ \ \ \sup_{x \in\,\mathcal{O}}\abs{f(x,r)-f(x,s)}\leq c\abs{r-s},\ \ \ r,s\in\mathbb{R}.
	\end{equation}
As a consequence of the limiting result proved in \cite{cerraixi}, the well-posedness of equation \eqref{limit_para} has been established when the initial condition $\mathfrak{r}_0\in H^{1}$. Here we want to prove the existence and uniqueness of solutions of \eqref{limit_para} when $\mathfrak{r}_0\in L^{2}(\Omega;H)$.

\begin{Definition}\label{weak_def_app}
	Let $\mathfrak{r}_0\in L^{2}(\Omega;H)$. An adapted process $\rho\in L^{2}(\Omega;C([0,T];H)\cap L^{2}(0,T;H^{1}))$ is a solution of equation \eqref{limit_para} if for every  $\varphi\in C^{\infty}_{0}(\mathcal{O})$
	\begin{equation}\label{weak_test_app}
		\begin{array}{ll}
			\ds{\Inner{\rho(t),\varphi}_{H} }
			&\ds{=\Inner{\mathfrak{r}_0,\varphi}_{H}-\int_{0}^{t}\Inner{b(\rho(s))\nabla\rho(s),\nabla\varphi}_{H}ds }\\
			\vs
			&\ds{\quad+\int_{0}^{t}\Inner{F_{g}(\rho(s)),\varphi}_{H}ds+\int_{0}^{t}\Inner{\varphi,\sigma_{g}(\rho(s))dw^{Q}(s)}_{H},\ \ \ \P\text{-a.s.} }
		\end{array}
	\end{equation}
\end{Definition}

In order to study equation \eqref{limit_para}, we first consider the following approximating problem
\begin{equation}\label{limit_para_approx}
	\le\{\begin{array}{l}
		\ds{\partial_{t}\rho^{\epsilon}(t,x)=\text{div}\big(b(\rho^{\epsilon}(t,x))\nabla\rho^{\epsilon}(t,x)\big)-\epsilon\Delta^{2}\rho^{\epsilon}(t,x)+f(x,\rho(t,x))+\sigma_{g}(\rho(s,\cdot))\partial_{t}w^{Q}(t,x), }\\[10pt]
		\ds{\rho^{\epsilon}(0,x)=\mathfrak{r}_0,\ \ \ \rho^{\epsilon}(t,\cdot)|_{\partial\mathcal{O}}=0, }
	\end{array}\r.
\end{equation}
with $0<\epsilon<<1$ (for a similar approach see e.g. \cite{debussche16}). 

\begin{Lemma}\label{wellposedness_approx}
	Assume Hypotheses \ref{Hypothesis1} and  \ref{Hypothesis2} and condition \eqref{smapp1}. Then, for every $\epsilon, T>0$ and every $\rho_{0}\in L^{2}(\Omega;H)$, equation \eqref{limit_para_approx} admits a unique  solution 
	\begin{equation*}
		\rho_{\epsilon}\in L^{2}(\Omega;C([0,T];H)\cap L^{2}(0,T;H^{2})).
	\end{equation*}
Moreover, there exists some $c_T>0$ such that for every $\epsilon>0$
	\begin{equation}\label{ene_approx}
		\E\sup_{t\in[0,T]}\Vert \rho^{\epsilon}(t)\Vert _{H}^{2}+\frac{2}{\gamma_{1}}\int_{0}^{t}\E\Vert \nabla\rho^{\epsilon}(s)\Vert _{H}^{2}ds+2\,\epsilon\, \int_{0}^{t}\E\Vert \Delta\rho^{\epsilon}(s)\Vert _{H}^{2}ds\leq c_T\left(1+\E\,\Vert \mathfrak{r}_0\Vert _{H}^{2}\right). 
	\end{equation}
	\end{Lemma}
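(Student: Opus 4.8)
The plan is to construct $\rho^{\epsilon}$ by a Galerkin scheme, exploiting the fact that for fixed $\epsilon>0$ the biharmonic term $-\epsilon\Delta^{2}$ turns \eqref{limit_para_approx} into a genuinely parabolic equation that is coercive in $V:=H^{2}\cap H^{1}_{0}$. It is convenient to observe that, since $B'=b$, the quasilinear term can be written as $\mathrm{div}(b(\rho)\nabla\rho)=\Delta B(\rho)$, and to work in the Gelfand triple $V\hookrightarrow H\hookrightarrow V^{*}$. First I would project \eqref{limit_para_approx} onto $H_{n}:=\mathrm{span}(e_{1},\dots,e_{n})$; since $\{e_{i}\}$ diagonalizes both $A$ and $\Delta^{2}$ and respects the relevant boundary conditions, and since $b$ is continuous and bounded, the reaction term has linear growth by \eqref{smapp1}, and $\sigma_{g}$ is bounded by \eqref{5.5}, each projected equation is a finite-dimensional It\^o SDE with locally Lipschitz, linearly growing coefficients, hence admits a unique global solution $\rho^{\epsilon}_{n}$ as soon as the a priori bound below is in force.

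The first and main quantitative step is the energy estimate \eqref{ene_approx}, which I would establish at the Galerkin level uniformly in $n$. Applying It\^o's formula to $\Vert\rho^{\epsilon}_{n}(t)\Vert_{H}^{2}$ and integrating by parts on $H_{n}$, the two leading terms give $\langle\rho^{\epsilon}_{n},\Delta B(\rho^{\epsilon}_{n})\rangle_{H}=-\langle b(\rho^{\epsilon}_{n})\nabla\rho^{\epsilon}_{n},\nabla\rho^{\epsilon}_{n}\rangle_{H}\le-\gamma_{1}^{-1}\Vert\nabla\rho^{\epsilon}_{n}\Vert_{H}^{2}$, using $b\ge 1/\gamma_{1}$ from Hypothesis \ref{Hypothesis2}, together with $-\epsilon\langle\rho^{\epsilon}_{n},\Delta^{2}\rho^{\epsilon}_{n}\rangle_{H}=-\epsilon\Vert\Delta\rho^{\epsilon}_{n}\Vert_{H}^{2}$. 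The reaction term is controlled by the linear growth coming from \eqref{smapp1}, the It\^o correction by $\tfrac12\Vert\sigma_{g}(\rho^{\epsilon}_{n})\Vert_{\mathcal{L}_{2}(H_{Q},H)}^{2}\le\tfrac12\sigma_{\infty}^{2}$ via \eqref{5.5}, and the stochastic integral by the Burkholder--Davis--Gundy inequality and the boundedness of $\sigma_{g}$. After absorbing the supremum of the martingale part and applying Gronwall's lemma, this yields \eqref{ene_approx} with $c_{T}$ independent of $n$ and $\epsilon$; in particular $\{\rho^{\epsilon}_{n}\}$ is bounded in $L^{2}(\Omega;C([0,T];H))$ and, through the $2\epsilon\Vert\Delta\cdot\Vert_{H}^{2}$ term, in $L^{2}(\Omega\times(0,T);H^{2})$.

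Next I would pass to the limit $n\to\infty$. From the $H^{2}$ bound and the equation I would derive a uniform fractional-in-time estimate for $\rho^{\epsilon}_{n}$ in a negative-order space, and then combine the Aubin--Lions--Simon compactness lemma with a stochastic compactness argument (Prokhorov tightness and Skorokhod representation in, say, $L^{2}(0,T;H^{1})\cap C([0,T];V^{*})$). The crucial point is that these bounds upgrade the weak convergence of $\nabla\rho^{\epsilon}_{n}$ to strong convergence in $L^{2}((0,T)\times\mathcal{O})$, which is exactly what is needed to identify the nonlinear limit: since $b$ is continuous and bounded, $b(\rho^{\epsilon}_{n})\nabla\rho^{\epsilon}_{n}\to b(\rho^{\epsilon})\nabla\rho^{\epsilon}$, and the Lipschitz continuity of $F_{g}$ and $\sigma_{g}$ lets one pass to the limit in the remaining terms and recover the weak formulation \eqref{weak_test_app} (with the additional $\Delta^{2}$ term). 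The pathwise uniqueness below then promotes the martingale solution to a strong one, by the Gy\"ongy--Krylov characterization. This identification of the quasilinear flux is the main obstacle, because only weak gradient convergence is automatic, and it is precisely the biharmonic regularization that supplies the $H^{2}$ control forcing the gradients to converge strongly.

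Finally, for uniqueness I would take two solutions $\rho_{1},\rho_{2}$ with the same data, set $w:=\rho_{1}-\rho_{2}$, and apply It\^o's formula to $\Vert w(t)\Vert_{H}^{2}$. Writing $b(\rho_{1})\nabla\rho_{1}-b(\rho_{2})\nabla\rho_{2}=b(\rho_{1})\nabla w+\big(b(\rho_{1})-b(\rho_{2})\big)\nabla\rho_{2}$, the first summand is monotone and produces $-\gamma_{1}^{-1}\Vert\nabla w\Vert_{H}^{2}$, while the cross term, estimated through the Lipschitz continuity of $b$ and Sobolev interpolation, is absorbed into $\epsilon\Vert\Delta w\Vert_{H}^{2}$ and a term $C\big(1+\Vert\rho_{2}(t)\Vert_{H^{2}}^{2}\big)\Vert w\Vert_{H}^{2}$; the noise contributes at most $c\Vert w\Vert_{H}^{2}$ by the Lipschitz continuity of $\sigma_{g}$. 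Since $t\mapsto\Vert\rho_{2}(t)\Vert_{H^{2}}^{2}$ is integrable on $[0,T]$ by \eqref{ene_approx}, Gronwall's lemma gives $w\equiv 0$, which completes the proof.
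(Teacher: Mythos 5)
Your energy estimate is exactly the paper's: It\^o's formula on $\Vert\rho^{\epsilon}\Vert_{H}^{2}$, the lower bound $b\geq\gamma_{1}^{-1}$ from Hypothesis \ref{Hypothesis2} for the gradient term, $-\epsilon\Vert\Delta\rho^{\epsilon}\Vert_{H}^{2}$ from the biharmonic term, linear growth of $F_{g}$, boundedness of $\sigma_{g}$ as in \eqref{5.5}, Burkholder--Davis--Gundy and Gronwall. Where you genuinely diverge is on existence and uniqueness: the paper disposes of both in one line by invoking \cite[Theorem 5.1]{frid}, whereas you carry out a self-contained Galerkin plus stochastic-compactness construction. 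That is a legitimate and arguably more informative route, and the key structural observation --- that for fixed $\epsilon$ the $H^{2}$ bound supplied by the regularization is uniform in $n$ (it degenerates only as $\epsilon\to0$, which is irrelevant here) --- is correctly exploited. Two remarks on the details. First, for the identification of the quasilinear flux you do not actually need strong convergence of the gradients: writing $\operatorname{div}(b(\rho)\nabla\rho)=\Delta B(\rho)$ and testing as $\langle B(\rho_{n}),\Delta\varphi\rangle_{H}$, the Lipschitz continuity of $B$ reduces the passage to the limit to strong convergence of $\rho_{n}$ in $L^{2}(0,T;H)$ alone; this is precisely the device the paper uses later in Proposition \ref{weak_solution_app}, and it spares you the more delicate upgrade of gradient convergence. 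Second, your uniqueness argument estimates the cross term $\int\lvert w\rvert\,\lvert\nabla\rho_{2}\rvert\,\lvert\nabla w\rvert$ by Sobolev interpolation against $\epsilon\Vert\Delta w\Vert_{H}^{2}+C(1+\Vert\rho_{2}\Vert_{H^{2}}^{2})\Vert w\Vert_{H}^{2}$; as written this is dimension-sensitive (the exponents close only for small $d$, while the paper allows any $d\geq1$). A cleaner absorption that works in every dimension is to integrate by parts the other way and pair $\Delta w$ with $B(\rho_{1})-B(\rho_{2})$, using $\lvert B(r_{1})-B(r_{2})\rvert\leq\gamma_{0}^{-1}\lvert r_{1}-r_{2}\rvert$, so that the whole quasilinear difference is bounded by $\tfrac{\epsilon}{2}\Vert\Delta w\Vert_{H}^{2}+C_{\epsilon}\Vert w\Vert_{H}^{2}$ and is swallowed by the biharmonic dissipation; alternatively, run uniqueness in $H^{-1}$, where the monotonicity $(B(r_{1})-B(r_{2}))(r_{1}-r_{2})\geq\gamma_{1}^{-1}\lvert r_{1}-r_{2}\rvert^{2}$ yields the dissipative term with no cross term at all, as the paper does for \eqref{limit_para} itself. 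You should also state explicitly that the regularizing operator is $A^{2}$ with the hinged conditions $\rho=\Delta\rho=0$ on $\partial\mathcal{O}$, so that the Dirichlet eigenbasis indeed diagonalizes it as your Galerkin scheme requires. With these repairs the argument is sound.
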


\begin{proof}
	The uniqueness and the existence of  solutions for equation \eqref{limit_para_approx} can be proven by proceeding as in the proof of \cite[Theorem 5.1]{frid}.
		
	In order to prove the energy estimate \eqref{ene_approx}, we apply  It\^o's formula and we get	\begin{equation*}
		\begin{array}{l}
			\ds{\frac 12\,\Vert \rho^{\epsilon}(t)\Vert _{H}^{2} =\frac 12\,\Vert \mathfrak{r}_0\Vert_{H}^{2}+\int_{0}^{t}\Inner{\text{div}\left(b(\rho^{\epsilon}(s))\nabla\rho^{\epsilon}(s)\right),\rho^{\epsilon}(s)}_{H}ds-\epsilon \int_{0}^{t}\Inner{\Delta^{2}\rho^{\epsilon}(s),\rho^{\epsilon}(s)}_{H}ds}\\
			\vs
			\ds{+\int_{0}^{t}\Inner{F_{g}(\rho^{\epsilon}(s)),\rho^{\epsilon}(s)}_{H}\,ds +\frac 12 \int_{0}^{t}\Vert \sigma_{g}(\rho^{\epsilon}(s))\Vert_{\mathcal{L}_2(H_Q,H)}^{2}ds+\int_{0}^{t}\Inner{\rho^{\epsilon}(s),\sigma_{g}(\rho^{\epsilon}(s))dw^Q(s)}_{H}}\\
			\vs
			\ds{\leq \frac 12\,\Vert \mathfrak{r}_0\Vert _{H}^{2}-\frac{1}{\gamma_{1}}\int_{0}^{t}\Vert \nabla\rho^{\epsilon}(s)\Vert _{H}^{2}ds-\epsilon \int_{0}^{t}\Vert \Delta\rho^{\epsilon}(s)\Vert _{H}^{2}ds }\\
			\vs
			\ds{\hsp +c\int_{0}^{t}\Big(1+\Vert \rho^{\epsilon}(s)\Vert _{H}^{2}\Big)ds   + 2\int_{0}^{t}\Inner{\rho^{\epsilon}(s),\sigma_{g}(\rho^{\epsilon}(s))dw^Q(s)}_{H}.}
		\end{array}
	\end{equation*}
	Note that 
	\begin{equation*}
		\begin{array}{ll}
			&\ds{\E\sup_{s\in[0,t]}\left\lvert\int_{0}^{s}\Inner{\rho^{\epsilon}(r),\sigma_{g}(\rho^{\epsilon}(r))dw^Q(r)}_{H}\right\rvert \leq c\,\int_0^t \E\,\Vert \rho^{\epsilon}(s)\Vert _{H}^{2}\,ds+c_T },
		\end{array}
	\end{equation*} 
	and hence
	\begin{equation*}
		\begin{array}{ll}
		&\ds{\E\sup_{s\in[0,t]}\Vert \rho^{\epsilon}(s)\Vert _{H}^{2}+\frac{2}{\gamma_{1}}\E\int_{0}^{t}\Vert \nabla\rho^{\epsilon}(s)\Vert _{H}^{2}ds+2\epsilon\, \E\int_{0}^{t}\Vert \Delta\rho^{\epsilon}(s)\Vert _{H}^{2}ds }\\
		\vs
		&\ds{\hsp\leq \E\Vert \mathfrak{r}_0\Vert _{H}^{2}+c\int_{0}^{t}\E\Vert \rho^{\epsilon}(s)\Vert _{H}^{2}\,ds+c_T. }
		\end{array}
	\end{equation*}
	Therefore, the Gronwall lemma  gives \eqref{ene_approx}.
	
\end{proof}

\begin{Proposition}\label{weak_solution_app}
	Assume Hypotheses \ref{Hypothesis1}, \ref{Hypothesis2} and condition \eqref{smapp1}, and fix $\mathfrak{r}_0\in L^{2}(\Omega;H)$. Then, for every $T>0$, there exists a unique  solution 
	\begin{equation*}
		\rho\in L^{2}(\Omega;C([0,T];H))\cap L^{2}(0,T;H^{1}))
	\end{equation*}
	of equation \eqref{limit_para}. Moreover, there exists some constant $c_T>0$ such that
	\begin{equation}\label{approx_energy}
		\E\sup_{t\in[0,T]}\Vert \rho(t)\Vert _{H}^{2}+\E\int_{0}^{T}\Vert \nabla\rho(s)\Vert _{H}^{2}ds\leq c_T\left(1+\E\Vert \mathfrak{r}_0\Vert _{H}^{2}\right).
	\end{equation}
\end{Proposition}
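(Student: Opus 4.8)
The plan is to obtain $\rho$ as the vanishing-viscosity limit of the solutions $\rho^{\epsilon}$ of the regularized problem \eqref{limit_para_approx}, whose existence, uniqueness and $\epsilon$-uniform energy bound \eqref{ene_approx} are provided by Lemma \ref{wellposedness_approx}. First I would record the compactness afforded by \eqref{ene_approx}: the family $\{\rho^{\epsilon}\}$ is bounded in $L^{2}(\Omega;L^{\infty}(0,T;H))\cap L^{2}(\Omega\times(0,T);H^{1})$, and $\sqrt{\epsilon}\,\Delta\rho^{\epsilon}$ is bounded in $L^{2}(\Omega\times(0,T);H)$, uniformly in $\epsilon$. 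Extracting a subsequence, $\rho^{\epsilon}\rightharpoonup\rho$ weakly in $L^{2}(\Omega\times(0,T);H^{1})$ and weakly-$\ast$ in $L^{2}(\Omega;L^{\infty}(0,T;H))$; moreover, since $|\epsilon\Inner{\Delta\rho^{\epsilon},\Delta\varphi}_{H}|\leq\sqrt{\epsilon}\,\big(\sqrt{\epsilon}\,\norm{\Delta\rho^{\epsilon}}_{H}\big)\norm{\Delta\varphi}_{H}\to0$ for every $\varphi\in C_{0}^{\infty}(\mathcal{O})$, the biharmonic regularization disappears in the weak formulation \eqref{weak_test_app}.

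The crux is passing to the limit in the quasilinear flux $b(\rho^{\epsilon})\nabla\rho^{\epsilon}$, for which weak convergence of $\rho^{\epsilon}$ is not enough: I need strong convergence of $\rho^{\epsilon}$ in $L^{2}(\Omega\times(0,T);H)$. I would obtain it from the strong monotonicity of the filtration operator in $H^{-1}$. Writing $\text{div}(b(r)\nabla r)=\Delta B(r)$ with $B'=b\geq 1/\gamma_{1}$, one has
\[\Inner{\Delta\big(B(\rho^{\epsilon})-B(\rho^{\delta})\big),\rho^{\epsilon}-\rho^{\delta}}_{H^{-1}}=-\Inner{B(\rho^{\epsilon})-B(\rho^{\delta}),\rho^{\epsilon}-\rho^{\delta}}_{H}\leq-\frac{1}{\gamma_{1}}\norm{\rho^{\epsilon}-\rho^{\delta}}_{H}^{2}.\]
Applying It\^o's formula to $\norm{\rho^{\epsilon}-\rho^{\delta}}_{H^{-1}}^{2}$ exactly as in the proof of Proposition \ref{weak_solution}, the two biharmonic terms are bounded by $c\,\epsilon\,(\E\int_{0}^{T}\norm{\nabla\rho^{\epsilon}}_{H}^{2}\,ds)^{1/2}(\E\int_{0}^{T}\norm{\nabla(\rho^{\epsilon}-\rho^{\delta})}_{H}^{2}\,ds)^{1/2}$, which vanishes as $\epsilon,\delta\to0$ by the uniform $H^{1}$ bound, while the Lipschitz perturbations $F_{g},\sigma_{g}$ are absorbed through the coercive term and the embedding $\norm{\cdot}_{H^{-1}}\leq\alpha_{1}^{-1/2}\norm{\cdot}_{H}$. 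This makes $\{\rho^{\epsilon}\}$ Cauchy in $C([0,T];L^{2}(\Omega;H^{-1}))\cap L^{2}(\Omega\times(0,T);H)$, hence strongly convergent and, along a further subsequence, convergent a.e. Then $b(\rho^{\epsilon})\to b(\rho)$ strongly in every $L^{p}$, $p<\infty$, by boundedness of $b$ and dominated convergence, so that $b(\rho^{\epsilon})\nabla\rho^{\epsilon}\rightharpoonup b(\rho)\nabla\rho$ weakly in $L^{2}$; likewise $F_{g}(\rho^{\epsilon})\to F_{g}(\rho)$ and $\sigma_{g}(\rho^{\epsilon})\to\sigma_{g}(\rho)$. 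Passing to the limit in \eqref{weak_test_app} identifies $\rho$ as a solution, and \eqref{approx_energy} follows from \eqref{ene_approx} by weak lower semicontinuity (Fatou).

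For uniqueness, given two solutions $\rho_{1},\rho_{2}$ in the stated class I would apply It\^o's formula to $\norm{\rho_{1}-\rho_{2}}_{H^{-1}}^{2}$ and invoke the same $H^{-1}$-monotonicity: the diffusion contributes $-\gamma_{1}^{-1}\norm{\rho_{1}-\rho_{2}}_{H}^{2}$, the drift is controlled by $\norm{F_{g}(\rho_{1})-F_{g}(\rho_{2})}_{H^{-1}}\leq (L_{f}/\sqrt{\alpha_{1}}\gamma_{0})\norm{\rho_{1}-\rho_{2}}_{H}$ together with Young's inequality, and the noise by its $\mathcal{L}_{2}(H_{Q},H^{-1})$ Lipschitz bound, so that a Gronwall argument in $\norm{\rho_{1}-\rho_{2}}_{H^{-1}}^{2}$ yields $\rho_{1}=\rho_{2}$. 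The main obstacle throughout is the nonlinear-flux passage of the second paragraph: it is precisely the monotone $H^{-1}$ structure that lets strong $H$-compactness bypass the impossibility of composing weak limits with the nonlinearity. Should one wish to dispense entirely with any bound on the size of the noise coefficient, the Cauchy argument can be replaced by a tightness-plus-Skorokhod compactness argument for the laws of $\rho^{\epsilon}$ (using the uniform $H^{1}$ bound and a fractional-in-time estimate read off the equation), followed by identification of the limit as a martingale solution and then by the $H^{-1}$ pathwise uniqueness above.
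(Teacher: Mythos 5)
Your overall strategy (vanishing viscosity from \eqref{limit_para_approx} plus the uniform bound \eqref{ene_approx}) matches the paper's, but your mechanism for passing to the limit does not, and it has a genuine gap. You propose to show $\{\rho^{\epsilon}\}$ is Cauchy in $H^{-1}$ via the monotonicity of $B$, and to prove uniqueness the same way. That is exactly the technique the paper uses for Proposition \ref{weak_solution} in the main body --- but there it is available only because Hypothesis \ref{Hypothesis_control} is in force. In the It\^o expansion of $\norm{\rho^{\epsilon}-\rho^{\delta}}_{H^{-1}}^{2}$ the martingale correction contributes
\[
\tfrac{1}{2}\norm{\sigma_{g}(\rho^{\epsilon})-\sigma_{g}(\rho^{\delta})}_{\mathcal{L}_{2}(H_{Q},H^{-1})}^{2}\leq \frac{L_\sigma}{2\alpha_{1}\gamma_{0}^{2}}\norm{\rho^{\epsilon}-\rho^{\delta}}_{H}^{2},
\]
which is quadratic in the $H$-norm with a \emph{fixed} constant: unlike the drift term, it cannot be split by Young's inequality into a small multiple of $\norm{\cdot}_{H}^{2}$ plus a Gronwall-able $\norm{\cdot}_{H^{-1}}^{2}$ term. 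Absorbing it into $-\gamma_{1}^{-1}\norm{\cdot}_{H}^{2}$ forces $L_\sigma/(2\alpha_{1}\gamma_{0}^{2})<1/\gamma_{1}$, i.e.\ essentially \eqref{additional_condition}. The appendix proposition is deliberately stated \emph{without} any such smallness (only Hypotheses \ref{Hypothesis1}, \ref{Hypothesis2} and \eqref{smapp1}), and it is invoked in that generality. Your closing remark correctly identifies tightness--Skorohod as the way out for existence --- that is in fact what the paper does, via increment estimates in $H^{-3}$, Simon's compactness theorem and the identification argument of \cite{debussche16} --- but you then propose to conclude with ``the $H^{-1}$ pathwise uniqueness above'', which suffers from the same obstruction. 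The paper's uniqueness is obtained by a different argument (as in \cite[Theorem 6.2]{cerraixie}), exploiting the $L^{2}(0,T;H^{1})$ regularity of solutions rather than the $H^{-1}$ contraction.

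A second gap: you never establish $\rho\in C([0,T];H)$ $\mathbb{P}$-a.s. Your limits deliver $\rho\in L^{2}(\Omega;L^{\infty}(0,T;H))\cap L^{2}(\Omega\times(0,T);H^{1})$ and, at best, continuity in $H^{-1}$ in a mean-square sense; that is strictly weaker than membership in $L^{2}(\Omega;C([0,T];H))$ as required by the statement. The paper devotes its Step 2 to this point, writing $\rho=\xi+\eta$ with $\xi$ the solution of the linear stochastic heat equation driven by $\sigma_{g}(\rho)$ (continuous in $H$ by standard factorization estimates) and $\eta$ the solution of the pathwise quasilinear problem \eqref{limit_para_2}, whose continuity follows from the representation through the evolution family $U(t,s)$. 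Some such argument is needed and is absent from your proposal.
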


\begin{proof} By proceeding as in  \cite[Theorem 6.2]{cerraixie}, we can show that equation \eqref{limit_para} admits at most one solution in $L^{2}(\Omega;C([0,T];H))\cap L^{2}(0,T;H^{1}))$. Hence, if we show that there exists a probabilistically weak solution 
\[(\hat{\Omega}, \hat{\mathcal{F}}, \{\hat{\mathcal{F}}\}_t, \hat{\mathbb{P}}, \hat{w}^Q, \hat{\rho} )\] such that  $\hat{\rho} \in\,L^{2}(\hat{\Omega};C([0,T];H))\cap L^{2}(0,T;H^{1}))$, the existence and uniqueness of a probabilistically strong solution for equation \eqref{limit_para} follows.
	
	{\em Step 1.} There exists a filtered probability space $(\hat{\Omega}, \hat{\mathcal{F}}, \{\hat{\mathcal{F}}\}_t, \hat{\mathbb{P}})$, a cylindrical Wiener process $\hat{w}^Q$ associated with $\{\hat{\mathcal{F}}\}_t$ and a process $\hat{\rho}\in L^{2}(\Omega;L^{\infty}(0,T;H))\cap L^{2}(0,T;H^{1}))$ such that
	\begin{equation*}
		\begin{array}{ll}
			\ds{\Inner{\hat{\rho}(t),\varphi}_{H} }
			&\ds{=\Inner{\mathfrak{r}_0,\varphi}_{H}-\int_{0}^{t}\Inner{b(\hat{\rho}(s))\nabla\hat{\rho}(s),\nabla\varphi}_{H}ds }\\
			\vs
			&\ds{\quad+\int_{0}^{t}\Inner{F_{g}(\hat{\rho}(s)),\varphi}_{H}ds+\int_{0}^{t}\Inner{\varphi,\sigma_{g}(\hat{\rho}(s))d\hat{w}^{Q}(s)}_{H},\ \ \ \hat{\P}\text{-a.s.} }
		\end{array}
	\end{equation*}
	for every $\varphi \in\,C^\infty_0(\mathcal{O})$.
	
		{\em Proof of Step 1.}
According to Proposition \ref{wellposedness_approx}, we know that for every $\epsilon>0$ there exists a unique  solution $\rho_{\epsilon}$ to equation \eqref{limit_para_approx}, and
	\begin{equation}\label{tight1}
		\sup_{\e \in\,(0,1)}\left( \E\sup_{t\in[0,T]}\Vert \rho^{\epsilon}(t)\Vert _{H}^{2}+\E\int_{0}^{T}\Vert \rho^{\epsilon}(t)\Vert _{H^{1}}^{2}dt\right)<\infty.
	\end{equation}
For every $ h \in\,(0,T)$ and $t \in\,[0,T-h]$ we have
\[\begin{aligned}
\rho^\epsilon(t+h)&-\rho^\epsilon(t)=\int_t^{t+h} \text{div}\left(b(\rho^\epsilon(s))\nabla \rho^\epsilon(s)\right)\,ds-\epsilon	\int_t^{t+h}\Delta^2 \rho^\epsilon(s)\,ds\\[10pt]
&\hsp+\int_t^{t+h}F_g(\rho^\epsilon(s))\,ds+\int_t^{t+h}\sigma_g(\rho^\epsilon(s))dw^Q(s)=:\sum_{k=1}^4 I^\epsilon_k(t,h).
\end{aligned}\]
We have
\begin{equation}  \label{sm101}\begin{aligned}
\sup_{t \in\,[0,T-h]}\Vert I^\epsilon_1(t,h)\Vert_{H^{-1}}\leq c\,\sup_{t \in\,[0,T]}\int_t^{t+h}\Vert \rho^\epsilon(s)\Vert_{H^{1}}\,ds\leq c\left(\int_0^T	\Vert \rho^\epsilon(s)\Vert_{H^{1}}^2\,ds\right)^{1/2}\,h^{1/2}.
\end{aligned}\end{equation}
For $I^\epsilon_{2}(t,h)$, if $\e \in\,(0,1)$ we have
\begin{equation}  \label{sm102}\begin{aligned}
\sup_{t \in\,[0,T-h]}\Vert I^\epsilon_2(t,h)\Vert_{H^{-3}}\leq \sup_{t \in\,[0,T-h]}\int_t^{t+h}\Vert \rho^\epsilon(s)\Vert_{H^{1}}\,ds\leq c\left(\int_0^T	\Vert \rho^\epsilon(s)\Vert_{H^{1}}^2\,ds\right)^{1/2}\,h^{1/2},
\end{aligned}\end{equation}
and for $I^\epsilon_3(t,h)$ we have
\begin{equation}  \label{sm103}\begin{aligned}
\sup_{t \in\,[0,T-h]}\Vert I^\epsilon_3(t,h)\Vert_{H}\leq c\sup_{t \in\,[0,T-h]}\int_t^{t+h}\left(1+\Vert \rho^\epsilon(s)\Vert_{H}\right)\,ds\leq c_T\left(1+\sup_{t \in\,[0,T]}\Vert \rho^\epsilon(s)\Vert_{H}\right) h,
\end{aligned}\end{equation}
Finally, for $I^\epsilon_4(t,h)$, by using a factorization argument as in \cite[Theorem 5.11 and Theorem 5.15]{DP-Zab}, due to the boundedness of $\sigma_g$ in $\mathcal{L}_2(H_Q,H)$ we obtain that for some $\theta \in\,(0,1)$
\begin{equation}
\label{sm104}
\sup_{\e \in\,(0,1)}\mathbb{E}\,\Vert I^\epsilon_4(t,h)\Vert_{C^\theta([0,T];H)}<\infty.	
\end{equation}
Therefore, by putting together \eqref{sm101}, \eqref{sm102}, \eqref{sm103} and \eqref{sm104}, thanks to \eqref{tight1} we conclude
\[\sup_{t \in\,[0,T-h]}\Vert \rho^\epsilon(t+h)-\rho^\epsilon(t)\Vert_{H^{-3}}\leq c\,h^{\frac 12\wedge \theta},\ \ \ \ \ \ \ h \in\,[0,T),\]
and together with the bound
\[\sup_{\e \in\,(0,1)}\mathbb{E}\,\sup_{t \in\,[0,T]}\Vert \rho^\epsilon(t)\Vert_H<\infty,\]
due to \cite[Theorem 7]{simon} this implies that $\{\rho^\e\}_{\epsilon \in\,(0,1)}$ is tight in $L^\infty(0,T;H^{-\alpha})$, for every $\alpha >0$. Moreover, since for every $\beta \in\,(-3,1)$ we have
\[\Vert u\Vert_{H^\beta}\leq \Vert u\Vert_{H^1}^{\frac{3+\beta}4}\Vert u\Vert_{H^{-3}}^{\frac{1-\beta}4},\]
and the bound
\[\sup_{\e \in\,(0,1)}\int_0^T\mathbb{E}\,\Vert \rho^\epsilon(s)\Vert_{H^1}^2\,ds<\infty\]
holds, 
thanks again to \cite[Theorem 7]{simon}, we have that the family $\{\rho^\e\}_{\epsilon \in\,(0,1)}$ is tight also in the space $L^{8/(3+\beta)}(0,T;H^\beta)$, for every $\beta \in\,(-3,1)$.

In what follows, for every $\alpha>0$ and $\beta \in\,(-3,1)$ we denote
\[\mathcal{X}_{\alpha,\beta}(T):=\left[L^\infty (0,T;H^{-\alpha} )\cap L^{8/(3+\beta)} (0,T;H^{\beta} )\right]\times C([0,T];U),\]
where $U$ is any Hilbert space such that the embedding $H_Q\hookrightarrow U$ is Hilbert-Schmidt. Due to the tightness of $\{\rho^\e, w^Q\}_{\e \in\,(0,1)}$ in $\mathcal{X}_{\alpha, \beta}(T)$, there exists a sequence $\epsilon_n\downarrow0$ such that $\mathcal{L}(\rho^{\e_n}, w^Q)$ is weakly convergent in $\mathcal{X}_{\alpha, \beta}(T)$. Due to  Skorohod's Theorem this implies that there exists a probability space  $(\hat{\Omega}, \hat{\mathcal{F}}, \hat{P})$, a sequence of $\mathcal{X}_{\alpha, \beta}(T)$-valued random variables $\mathcal{Y}_n=(\hat{\rho}_n, \hat{w}^Q_n)$ and a $\mathcal{X}_{\alpha, \beta}(T)$-valued random variable $\mathcal{Y}=(\hat{\rho}, \hat{w}^Q)$, all defined on the probability space $(\hat{\Omega}, \hat{\mathcal{F}}, \hat{P})$, such that
\begin{equation}
\label{sm6}
\mathcal{L}(\mathcal{Y}_n)=\mathcal{L}(\rho^{\e_n}, w^Q),	
\end{equation}
and
\begin{equation}
	\label{sm7}
	\lim_{n\to\infty} \left(\Vert \hat{\rho}_n-\hat{\rho}\Vert_{L^\infty(0,T;H^{-\alpha})}+\Vert \hat{\rho}_n-\hat{\rho}\Vert_{L^{8/(3+\beta)}(0,T;H^{\beta})}+\Vert \hat{w}^Q_n-\hat{w}^Q\Vert_{C([0,T];U)}\right)=0,\ \ \ \ \ \hat{\mathbb{P}}-\text{a.s.}
\end{equation}

Now, we have
\[\begin{aligned}
\int_{0}^{t}&\Inner{\text{div}\left(b(\hat{\rho}_n(s))\nabla\hat{\rho}_n(s)\right),\varphi}_{H}ds =-\int_0^t\Inner{b(\hat{\rho}_n(s))\nabla\hat{\rho}_n(s),\nabla\varphi}_{H}ds\\[10pt]
&=-\int_0^t\Inner{\nabla(B(\hat{\rho}_n(s)),\nabla\varphi}_{H}ds=\int_0^t \Inner{B(\hat{\rho}_n(s)),\Delta\varphi}_{H}ds,
	\end{aligned}\]
	and thanks to \eqref{sm6} and \eqref{sm7}, this gives
 for every $\varphi\in C^{\infty}_{0}(\mathcal{O})$,
	\begin{equation*}
		\begin{array}{ll}
			\ds{\Inner{\hat{\rho}_{n}(t),\varphi}_{H} }
			&\ds{=\Inner{\mathfrak{r}_0,\varphi}_{H}+\int_{0}^{t}\Inner{B(\hat{\rho}_{n}(s)),\Delta\varphi}_{H}ds-\epsilon_{n}\int_{0}^{t}\Inner{\hat{\rho}_{n}(s),\Delta^{2}\varphi}_{H}ds }\\
			\vs
			&\ds{\quad +\int_{0}^{t}\Inner{F_{g}(\hat{\rho}_{n}(s)),\varphi}_{H}ds+\int_{0}^{t}\Inner{\varphi,\sigma_{g}(\hat{\rho}_{n}(s))d\hat{w}_{n}^{Q}(s)}_{H}. }
		\end{array}
	\end{equation*}
	Thus, by using the general argument introduced in \cite[proof of Theorem 4.1]{debussche16}, thanks to \eqref{sm7}  we can take the limit as $n\to\infty$   of both sides in the equality above, and we obtain that  $\hat{\rho}$ satisfies \eqref{weak_test_app}, with $w^Q$ replaced by $\hat{w}^Q$. Moreover, $\hat{\rho}\in L^{2}(\hat{\Omega};L^{\infty}(0,T;H))\cap L^{2}(0,T;H^{1}))$ and satisfies \eqref{approx_energy}, with $\mathbb{E}$ replaced by $\hat{\mathbb{E}}$.
		
	{\em Step 2.} We have that there exists a unique solution $\rho \in\,L^2(\Omega;C([0,T];H)\cap L^2(0,T;H^1))$ that satisfies \eqref{approx_energy}.	
	
	{\em Proof of Step 2.}
	Due to what we have seen above, there exists a unique solution 
	\[\rho \in\,L^2(\Omega;L^\infty(0,T;H)\cap L^2(0,T;H^1))\] that satisfies \eqref{approx_energy}. It only remains to prove that $\rho \in\,C([0,T];H)$, $\mathbb{P}$-a.s.
	 By proceeding as in \cite[Section 4.3]{debussche16} we consider the problem
		\[
		\le\{\begin{array}{l}
			\ds{\partial_{t}\xi(t,x)=\Delta\xi(t,x)+\sigma_{g}(\rho(t,\cdot))\partial_{t}w^{Q}(t,x), }\\[10pt]
			\ds{\xi(0,x)=\mathfrak{r}_0(x),\ \ \ \ \ \ \ \xi(t,\cdot)|_{\partial\mathcal{O}}=0, }
		\end{array}\r.
	\]
	whose unique solution $\xi$ belongs to  $L^2(\Omega;C([0,T];H)\cap L^2(0,T;H^1))$.
	Then, if we denote $\eta(t):=\rho(t)-\xi(t)$, we have that $\eta\in L^{\infty}(0,T;H)\cap L^{2}(0,T;H^{1})$, $\mathbb{P}$-a.s., and solves 
	\begin{equation}\label{limit_para_2}
		\le\{\begin{array}{l}
			\ds{\partial_{t}\eta(t,x)=\text{div}\big(b(\rho(t,x))\nabla\eta(t,x)\big)+\text{div}\big[(b(\rho(t,x))-I)\nabla\xi(t,x)\big]+f_{g}(x,\rho(t,x)), }\\[10pt]
			\ds{\eta(0,x)=0,\ \ \ \ \ \ \ \ \eta(t,\cdot)|_{\partial\mathcal{O}}=0. }
		\end{array}\r.
	\end{equation}
Now, if we denote by $U(t,s)$ the evolution family associated with the time-dependent differential operator
\[\mathcal{L}_t\varphi(x)=\text{div}\left[b(\rho(t,x))\nabla\varphi(x)\right],\ \ \ \ \ x \in\,\mathcal{O},\]
we have that
\[\eta(t,x)=	\int_0^t U(t,s)\left[\text{div}\left[b(\rho(t,\cdot))-I\right]\nabla \xi(s,\cdot)+f_g(\cdot,\rho(s,\cdot))\right](x)\,ds,\]
and since $\xi \in\,L^2(0,T;H^1)$ and $\rho \in\,L^\infty(0,T;H)$, $\mathbb{P}$-a.s., we get that $\eta \in\,C([0,T];H)$, $\mathbb{P}$-a.s. In particular, $\rho=\eta+\xi$ belongs to $C([0,T];H)$, $\mathbb{P}$-a.s.
	
\end{proof}

\subsection{Well-posedness of equation \eqref{limiting_problem} in $H$} 

From the well-posedness of the quasilinear stochastic parabolic equation \eqref{limit_para}, we get the well-poseness of equation \eqref{limiting_problem} in $H$.

By proceeding as in the proof of \cite[Theorem 7.1]{cerraixi}, we can show that  $u\in L^{2}(\Omega;C([0,T];H)\cap L^{2}([0,T];H^{1}))$ is a  solution to equation \eqref{limiting_problem} with initial condition $\mathfrak{u}_0\in L^{2}(\Omega;H)$ if and only if $\rho:=g(u)$ is a weak solution to equation \eqref{limit_para} with initial value $\mathfrak{r}_0=g(\mathfrak{u}_0)\in L^{2}(\Omega;H)$. Moreover, as a consequence of the Lipschitz continuity of $g$ and $g^{-1}$ on $\mathbb{R}$, we have the following result:
\begin{Proposition}
	 Assume Hypotheses \ref{Hypothesis1}, \ref{Hypothesis2} and condition \eqref{smapp1}. For every $T>0$ and every $\mathfrak{u}_0\in L^{2}(\Omega;H)$, there exists a unique weak solution $u\in L^{2}(\Omega;C([0,T];H)\cap L^{2}(0,T;H^{1}))$, to equation \eqref{limiting_problem} such that 
	\[
		\E\sup_{t\in[0,T]}\Vert u(t)\Vert _{H}^{2}+\E\int_{0}^{T}\Vert u(t)\Vert _{H^{1}}^{2}\,dt\leq c_T\left(1+\E\Vert \mathfrak{u}_0\Vert _{H}^{2}\right).
	\]
\end{Proposition}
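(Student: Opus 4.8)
The plan is to deduce the statement from the well-posedness of the quasilinear equation \eqref{limit_para} in $H$, already established in Proposition \ref{weak_solution_app}, by transporting it through the pointwise change of variables $\rho = g(u)$, $u = g^{-1}(\rho)$. The key input is the equivalence recalled immediately above: arguing as in \cite[Theorem 7.1]{cerraixi} by means of the generalized It\^o formula, a process $u\in L^{2}(\Omega;C([0,T];H)\cap L^{2}(0,T;H^{1}))$ is a weak solution of \eqref{limiting_problem} with initial datum $\mathfrak{u}_0$ if and only if $\rho:=g(u)$ is a weak solution of \eqref{limit_para} with initial datum $\mathfrak{r}_0 = g(\mathfrak{u}_0)$. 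Granting this, the whole argument reduces to a Lipschitz change of variables.

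First I would verify that this change of variables respects the relevant function spaces. Since $\gamma\in C^1_b(\mathbb{R})$ with $\gamma_0\leq\gamma\leq\gamma_1$ and $g(0)=0$, both $g$ and $g^{-1}$ are globally Lipschitz on $\mathbb{R}$, with $\abs{g(r)}\leq\gamma_1\abs{r}$ and, by Hypothesis \ref{Hypothesis2}, $\abs{g^{-1}(r)}\leq\gamma_0^{-1}\abs{r}$ together with $\abs{(g^{-1})'(r)}\leq\gamma_0^{-1}$. Hence Nemytskii composition with $g$ and $g^{-1}$ maps $H$ into $H$ and $H^1$ into $H^1$; in particular $g(\mathfrak{u}_0)\in L^{2}(\Omega;H)$ whenever $\mathfrak{u}_0\in L^{2}(\Omega;H)$, and $g^{-1}(\rho)$ inherits the regularity class of $\rho$.

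Next, given $\mathfrak{u}_0$, I would invoke Proposition \ref{weak_solution_app} to obtain the unique solution $\rho\in L^{2}(\Omega;C([0,T];H)\cap L^{2}(0,T;H^{1}))$ of \eqref{limit_para} with initial datum $g(\mathfrak{u}_0)$, satisfying the energy estimate \eqref{approx_energy}, and set $u:=g^{-1}(\rho)$. By the equivalence, $u$ is a weak solution of \eqref{limiting_problem} lying in the required space. Uniqueness transfers directly: two solutions $u_1,u_2$ would give $g(u_1),g(u_2)$ both solving \eqref{limit_para} with the same initial datum, so $g(u_1)=g(u_2)$ by the uniqueness in Proposition \ref{weak_solution_app}, and the injectivity of $g$ forces $u_1=u_2$.

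Finally I would transfer the a priori bound. From $\abs{g^{-1}(r)}\leq\gamma_0^{-1}\abs{r}$ one gets $\norm{u(t)}_H\leq\gamma_0^{-1}\norm{\rho(t)}_H$, while the Sobolev chain rule $\nabla u=(g^{-1})'(\rho)\nabla\rho$ together with $\abs{(g^{-1})'}\leq\gamma_0^{-1}$ yields $\norm{u(t)}_{H^1}\leq\gamma_0^{-1}\norm{\rho(t)}_{H^1}$, recalling that $\norm{v}_{H^1}=\norm{\nabla v}_H$ in the present notation; moreover $\norm{g(\mathfrak{u}_0)}_H\leq\gamma_1\norm{\mathfrak{u}_0}_H$. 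Plugging these into \eqref{approx_energy} gives the claimed estimate for $u$, with a constant $c_T$ enlarged by a factor depending only on $\gamma_0$ and $\gamma_1$. I expect no genuine obstacle in this transfer; the only delicate ingredient is the equivalence through the generalized It\^o formula, which is imported from \cite{cerraixi}, while everything else is routine.
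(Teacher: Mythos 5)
Your proposal is correct and follows essentially the same route as the paper: the paper also reduces the statement to Proposition \ref{weak_solution_app} via the equivalence $u\mapsto\rho=g(u)$ imported from \cite[Theorem 7.1]{cerraixi}, and then transfers existence, uniqueness and the energy bound using the global Lipschitz continuity of $g$ and $g^{-1}$. Your additional details (the explicit bounds $\abs{g(r)}\leq\gamma_1\abs{r}$, $\abs{(g^{-1})'}\leq\gamma_0^{-1}$ and the chain rule for the $H^1$ norm) are exactly the ingredients the paper leaves implicit.
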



\end{document}